\documentclass{amsart}
\usepackage[utf8]{inputenc}
\usepackage{amssymb}
\usepackage{amsthm}
\usepackage[colorlinks]{hyperref}
\usepackage{enumitem}
\usepackage[dvips]{graphicx}
\usepackage{transparent}
\usepackage{wrapfig}
\usepackage[usenames,dvipsnames,svgnames,table]{xcolor}
\definecolor{darkred}{rgb}{0.55, 0.0, 0.0}
\hypersetup{colorlinks,linkcolor=darkred}
\usepackage{enumitem}
\usepackage{marginnote}
\usepackage[capitalize,noabbrev,nameinlink]{cleveref}

\numberwithin{equation}{section}

\newtheorem{thm}{Theorem}[section]
\newtheorem{lemma}[thm]{Lemma}

\newtheorem{remark}[thm]{Remark}
\newtheorem{proposition}[thm]{Proposition}

\theoremstyle{definition}
\newtheorem{defn}[thm]{Definition}

\newtheorem{open}[thm]{Open problem}

\def\eqref#1{\textcolor{darkred}{(\ref{#1})}}
\DeclareMathOperator{\inte}{int}

\begin{document}
\title[Dynamics of the Takagi function and the shadowing property]{Dynamics of the Takagi function and the shadowing property}

\date{\today\ (\the\time)}

\author{Zoltán Buczolich}
\address{Department of Analysis, ELTE Eötvös Loránd University, Pázmány Péter Sétany 1/c, H-1117 Budapest, Hungary}
\email{zoltan.buczolich@ttk.elte.hu}

\author{Jes\'us Llorente}
\address{IMI, Departamento de An{\'a}lisis Matem{\'a}tico y Matem{\'a}tica Aplicada,
	Facultad Ciencias Matem{\'a}ticas, Universidad Complutense, 28040, Madrid, Spain}
\email{jesllore@ucm.es}

\keywords{The Takagi function, continuous nowhere differentiable function, discrete dynamical system, the Takagi family, the shadowing property. }
\thanks{2020 \emph{Mathematics Subject Classification.} Primary: 37E05. Secondary: 26A18, 26A27, 26A30, , 	37C25, 37B65. }
\thanks{This research was partially supported by  grant PID2022-138758NB-I00 from the Spanish Ministry of Science and Innovation.}
\begin{abstract}
The Takagi function $T:[0,1]\to \mathbb{R}$ is a classical example of a continuous nowhere differentiable function. In this paper, we study the discrete dynamical system generated by the Takagi function.  First, we prove that for almost every point $x\in [0,1]$, the orbit $(T^n(x))_n$ converges to $2/3$. We introduce the family of Takagi maps, given by $\textbf{T}_\gamma=\gamma \cdot T$, where $\gamma>0$ is a parameter. We also study the shadowing property for this family of maps. We show that the Takagi function has the shadowing property. Additionally, we provide two distinct techniques that allow us to find values of the parameter $\gamma$ for which $\textbf{T}_\gamma$ fails to have the shadowing property. Finally, we pose some open questions.
\end{abstract}
\maketitle

\section{Introduction}\label{IntroSection}
The Takagi function is perhaps the simplest example of a continuous nowhere differentiable function. Although the original paper was published in 1903 (see \cite{T}), it remained overlooked for several decades in the Western World. As a consequence, many authors rediscovered it, bringing to light its striking properties and its connections to various fields, including probability theory, number theory, and mathematical analysis.

Among the various ways of defining the Takagi function that can be found in the existing literature (see Section 4 of \cite{AK}, for instance), we adopt the following: let $D$ be the set of all dyadic numbers in the interval $[0,1]$ and we write 
$\displaystyle D=\cup_{n=1}^\infty D_n$, where 
$$
D_{n}=\left \{\frac{k}{2^{n-1}}:k=0,1\ldots, 2^{n-1}\right \}.
$$
We define the Takagi function $T:[0,1]\to\mathbb{R}$ as 
$$
T(x)=\sum_{n=1}^{\infty}g_n(x)=\lim_{m\to \infty}G_m(x),
$$
where $g_n(x)=\text{dist}\,(x, D_n)$ denotes the distance from $x$ to the set $D_{n}$ and $G_m=g_1+\cdots + g_m$. Observe that $G_m$ is a polygonal function whose nodes are located at the points of $D_{m+1}$. Furthermore, these functions converge uniformly to the Takagi function while approximating it monotonically from below (see Figure \ref{Gr} below). 
\begin{figure}[t]
	\begin{center}
		 \includegraphics[width=0.7\linewidth]{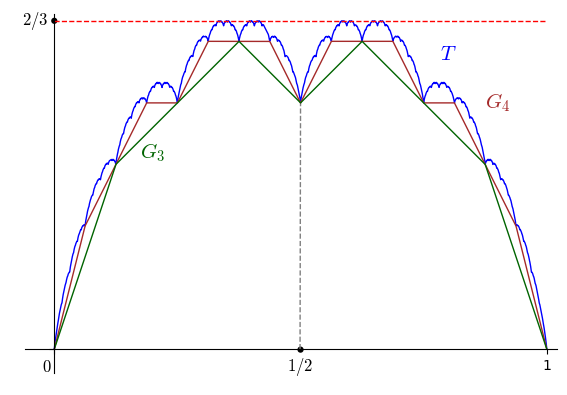}
		\caption{The graphs of $G_3$, $G_4$ and $T$.}
		\label{Gr}
	\end{center}
\end{figure}
Concerning the properties of its graph, it is immediate to see that it is symmetric about the line $x = 1/2$, and it has strict local minima at the dyadic points. In 1959, J. P. Kahane showed that the maximum value of the Takagi function is $2/3$ and determined the set of points where this maximum is attained (see \cite{Ka}). Moreover, the first author proved that for almost every ordinate in $ [0, \frac{2}{3}]$, the corresponding level set is finite (see \cite{Bu}). Although the Takagi function is not a self-similar function, it  satisfies the following ``self-affine" property (see Theorem 4.2 of \cite{L}): for every $n$ and $x\in D_n$  we have  
\begin{equation}\label{selfAffineProperty}
	T\left (x+\frac{y}{2^{n-1}} \right ) = T(x)+\frac{1}{2^{n-1}} \left [ T(y)+y\, G'^{+}_{n-1}(x) \right ]\quad \text{ for all }\, y\in [0,1].
\end{equation}

The aforementioned properties will be useful in obtaining the results in this paper. The surveys \cite{AK} and \cite{L}, as well as the thesis \cite{JLL}, contain a wealth of information about the Takagi function and related topics.

In this paper, we consider the discrete dynamical system generated by the Takagi function, namely the one-dimensional dynamical system given by 
\begin{equation}\label{dds}
x_{n+1}= T(x_n),\quad x_0\in [0,1].
\end{equation}
In the sequel, the notation $T^n$ represents the composition of $T$ with itself $n$ times. Concerning the approach to the Takagi function from dynamical systems, we must mention the work of Y. Yamaguchi, K. Tanikawa, and N. Mishima (see \cite{YT}), in which the Takagi function is expressed as the invariant curve of a certain discrete dynamical system. The study of the system \eqref{dds} provides a new perspective on the Takagi function through discrete dynamics. 

Section \ref{S1} focuses on describing the behavior of an orbit of the system \eqref{dds}. First, we compute the set of fixed points of the Takagi function.  We then prove that for almost every point $x\in [0,1]$ the orbit $(T^n(x))_n$ converges to $2/3$. To achieve this, we use the following result, which is an immediate consequence of Theorem 2.4. in \cite{BLL}:
\begin{proposition}\label{NAD1}
For every $n$, the function $T^n$ is nowhere approximately differentiable.
\end{proposition}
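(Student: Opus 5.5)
The plan is to deduce the statement from Theorem 2.4 of \cite{BLL}. I expect that theorem to say, roughly, that if $f$ is nowhere approximately differentiable and $g$ is continuous and satisfies a suitable nondegeneracy condition on a set $E$, then $g\circ f$ (or $f\circ g$) is nowhere approximately differentiable on $E$. The argument is an induction on $n$. For $n=1$ we need the classical fact that $T$ is nowhere approximately differentiable. This follows from the self-affine identity \eqref{selfAffineProperty}. Near any point $x$, on dyadic intervals of length $2^{-(n-1)}$ containing $x$, the graph of $T$ is an affinely rescaled copy of the whole graph, plus a linear term of slope $G'^{+}_{n-1}$. That slope changes by $\pm 1$ at each scale, so it cannot converge. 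Hence on a set of positive lower density near $x$ the difference quotients oscillate by a fixed amount, and no approximate derivative exists.

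For the inductive step, write $T^{n+1}=T\circ T^n$. Suppose an approximate derivative $L$ of $T^{n+1}$ existed at $x_0$; set $y_0=T^n(x_0)$. I would treat two cases.
\begin{itemize}
\item If $T^n$ has a finite approximate derivative at $x_0$, we are immediately done, since that contradicts the inductive hypothesis.
\item Otherwise, along a set $A$ of density one at $x_0$ the quotients $(T(T^n(x))-T(y_0))/(x-x_0)$ tend to $L$. We then need to transfer this behaviour through $T^n$ to a density-type statement at $y_0$. This would show that $T$ has an approximate derivative (possibly zero, or infinite in a controlled way) at $y_0$ along the image set $T^n(A)$. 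That contradicts the base case, because the oscillation of $T$ near every point happens on sets of positive density.
\end{itemize}

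The transfer needs the image of a density-one set under $T^n$ to still carry positive density near $y_0$. For this I would use the Hölder-type bound $|T(x)-T(y)|\le C|x-y|\log(1/|x-y|)$ together with the finiteness of level sets of $T$ \cite{Bu}. Together these should give a Banach-indicatrix / Lusin (N)-type control on how $T^n$ distorts Lebesgue measure.

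This transfer is the main obstacle. It is also exactly what Theorem 2.4 of \cite{BLL} is designed to handle, so in practice the proof reduces to verifying that theorem's hypotheses for $T$. Those hypotheses are nowhere approximate differentiability of $T$ (the base case above) and the regularity of level sets (almost every level set finite, by \cite{Bu}). Once they are checked, the conclusion for $T^n$ follows for every $n$ by induction.
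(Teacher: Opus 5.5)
The paper gives no argument of its own here. The proposition is declared an immediate consequence of Theorem~2.4 of \cite{BLL}, whose title (``Differentiability properties of iterates of the Takagi family'') suggests it already concerns the iterates themselves. Insofar as your proposal ends by invoking that theorem, it agrees with the paper. But nothing supports your reconstruction of Theorem~2.4 as an abstract composition result with hypotheses ``$T$ is nowhere approximately differentiable'' and ``almost every level set of $T$ is finite''. So ``verifying its hypotheses'' is not a step you can actually carry out, and the induction you build around the citation has to stand on its own. It does not: there is a genuine gap exactly at the transfer step you flag, and the tools you name cannot close it.

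\emph{The inductive step.} The case split does no work, since by the induction hypothesis the first case never occurs. In the second case you want to turn $(T(T^nx)-T(y_0))/(x-x_0)\to L$ on a density-one set into information about the difference quotients of $T$ at $y_0$. That needs asymptotic control of $(T^nx-y_0)/(x-x_0)$ on a large set, and the absence of an approximate derivative of $T^n$ at $x_0$ is precisely the statement that this ratio has no approximate limit. The bound $|T(x)-T(y)|\le C|x-y|\log(1/|x-y|)$ controls the ratio at best up to logarithmic factors. When $L=0$ it gives no lower bound on $|T^nx-y_0|$ at all, so $T(y)-T(y_0)=o(|x-x_0|)$ says nothing about $o(|y-y_0|)$.

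\emph{The density transfer.} Finiteness of level sets cannot supply it: a strictly increasing singular function has one-point level sets, yet maps a set of full measure onto a null set. Concretely, any proof must rule out that $T^{n+1}$ is constant on a set of positive measure, since its density points would have approximate derivative $0$. For instance it must exclude $\lambda((T^n)^{-1}(L(2/3)))>0$. Here $L(2/3)$ is uncountable: by Lemma~\ref{2en2} it is the set of $x$ with $\varepsilon_{2k-1}+\varepsilon_{2k}=1$ for all $k$. The almost-every-ordinate result of \cite{Bu} is silent about this exceptional value, and this is exactly the kind of fact the paper derives from the proposition in Theorem~\ref{orbit}.

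\emph{The intended contradiction.} An approximate derivative of $T$ at $y_0$ that is ``infinite in a controlled way'' contradicts nothing. Your base case excludes only finite approximate derivatives, and $T$ does have infinite one-sided derivatives: $T(h)\ge kh$ for $0<h\le 2^{-k}$, so $T'_+(0)=+\infty$.

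\emph{The base case.} It also needs a different argument. The fact that $G'^{+}_{n}$ moves by $\pm1$ at each scale gives only ordinary non-differentiability, via dyadic endpoints, which form a null set. For the approximate version you must use \eqref{selfAffineProperty} to rescale a density-one set of good quotients at scale $2^{-n}$ onto most of $[0,1]$. Passing to a limit then forces $T$ to be affine on $[0,1]$, which is absurd.
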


When a computer is used to draw an orbit $(T^n(x))_n$ for some $x\in [0,1]$,  we must keep in mind that the value of the Takagi function at a point, as calculated by the computer, will always be an approximation of the true value, since it can only sum a finite number of terms in the series defining the Takagi function. In other words, a computer is only capable of computing the value of the function $G_n$ for a specific value of $n$. Moreover, a computer also makes rounding errors. This leads to the notion of a pseudo-orbit: 

\begin{defn}
	Let $f:\mathbb{R}\to \mathbb{R}$ be a continuous function. For a given $\delta>0$, a sequence $(x_n)_n$ is said to be a $\delta$-\emph{pseudo orbit} of $f$ provided that 
	$$
	|f(x_n)-x_{n+1}|<\delta\quad \text{ for all }\,n\geq 0.
	$$
\end{defn}

This notion dates back at least to G. D. Birkhoff (see \cite{Bi}), and its study is closely related to the numerical computation of orbits, since computer calculations can only produce pseudo-orbits (see \cite{TK, HYG}). For instance, if a computer is tasked with calculating the orbit of the point $1/6$, we observe a result akin to the figure depicted below (see Figure \ref{noisyOrbit}). However, it is well-known that $T(1/6)=1/2$.

\begin{figure}[h]
	\begin{center}
		\includegraphics[width=0.7\linewidth]{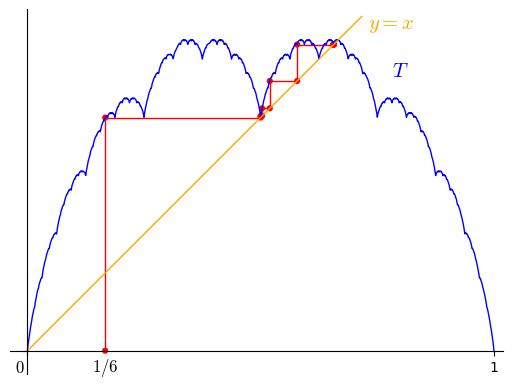}
		
		\caption{The orbit $(T^n(1/6))_n$ calculated by a computer, where $T\approx G_{13}$.}
		\label{noisyOrbit}
	\end{center}
\end{figure}

The previous figure shows that the computer-generated orbit differs significantly from the true orbit starting at $1/6$. However, given a computer-generated orbit, we may ask whether there exists a true orbit with a slightly different initial point that stays close to it for a long time. This leads to the study of the shadowing property: 

\begin{defn}
	A continuous function $f:\mathbb{R}\to \mathbb{R}$ is said to have the \emph{shadowing property} if for every $\varepsilon>0$ there exists $\delta>0$ such that for any $\delta$-pseudo orbit $(x_n)$ of $f$ there is $x^*\in \mathbb{R}$ such that 
	$$
	|f^n(x^*)-x_n|<\varepsilon\quad \text{ for all }\,n\geq 0.
	$$
\end{defn}

This concept was originally studied by D. Anosov (see \cite{An})  and R. Bowen (see \cite{Bo}), and has since become a fundamental notion in modern dynamical systems (see  \cite{Palmer} and \cite{Pilyugin}).

Sections \ref{BehaviorOrbitSection} and \ref{ShadowingPropertySection} are devoted to proving that the Takagi function has the shadowing property. In the former, we develop several technical results concerning the behavior of an orbit of the Takagi function starting very close to a fixed point. To this end, we introduce the concept of a point making a $(u,v,\eta)-jump$ of a certain order where $u$ and $v$ are fixed points of the Takagi function, $u,v$ and $\eta>0$ (see Definition \ref{jump}). In the latter, we combine these results to prove that the Takagi function has the shadowing property (see Theorem \ref{main}).

In 1988, E. M. Coven, I. Kan, and J. A. Yorke conducted a detailed study of the shadowing property for the family of tent maps. Inspired by this work, we introduce a family of maps, called the Takagi family, and study the shadowing property for this class of maps in Section \ref{NoShadowingSection}. It is worth mentioning that our approach reveals new geometrical properties of the Takagi function, like the existence of quasi-tangent points and their variants, that are of independent interest. 

\section{Fixed points and dynamical behavior}\label{S1}
We begin this section by determining the set of fixed points of the Takagi function, providing explicit expressions for these points in terms of their binary expansions. To achieve this, our approach consists of computing the set of fixed points of the function $G_{2n}$ and subsequently leveraging the fact that the functions $G_n$ converge uniformly to the Takagi function while approximating it monotonically from below. To this end, we use the following property of the function $g_n$, which is a particular case of Lemma 4.2 in \cite{FGGLl2}. For the sake of completeness, we provide a brief proof.
\begin{lemma}\label{2en2}
Let $x\in [0,1]$ with binary expansion given by $x=\sum_{n=1}^{\infty}\varepsilon_n 2^{-n}$. For every $n$, we have 
$$
g_n(x)+g_{n+1}(x)\leq 2^{-n}
$$
and the equality holds if and only if $\varepsilon_n+\varepsilon_{n+1}=1$.
\end{lemma}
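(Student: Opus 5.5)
The plan is to reduce everything to a statement about the single point $t=2^{n-1}x \bmod 1$, using the scaling structure of the sets $D_n$. Since $D_n$ is the lattice $2^{-(n-1)}\mathbb{Z}$ intersected with $[0,1]$, we have
\[
g_n(x)=2^{-(n-1)}\,\mathrm{dist}(2^{n-1}x,\mathbb{Z}).
\]
Write $2^{n-1}x = m + t$ with $m\in\mathbb{Z}$ and $t\in[0,1)$. Then
\[
g_n(x)=2^{-(n-1)}\phi(t), \qquad g_{n+1}(x)=2^{-n}\phi(2t),
\]
where $\phi(s)=\mathrm{dist}(s,\mathbb{Z})$. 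The binary digits of $t$ are $\varepsilon_n,\varepsilon_{n+1},\dots$, so $t=\varepsilon_n/2+\varepsilon_{n+1}/4+r/4$ with $r\in[0,1]$ the tail. The claimed inequality is then equivalent to
\[
h(t):=2\phi(t)+\phi(2t)\le 1 \quad\text{for } t\in[0,1],
\]
with equality exactly when the first two binary digits of $t$ differ.

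Next I would compute $h$ explicitly. It is piecewise linear and symmetric under $t\mapsto 1-t$, so it suffices to examine $[0,1/2]$:
\begin{itemize}
\item on $[0,1/4]$, $h(t)=2t+2t=4t$;
\item on $[1/4,1/2]$, $h(t)=2t+(1-2t)=1$.
\end{itemize}
By symmetry, $h\equiv 1$ on $[1/2,3/4]$ and $h(t)=4(1-t)$ on $[3/4,1]$. Hence $h\le 1$ everywhere, and $h(t)=1$ exactly on $[1/4,3/4]$.

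Finally, I would translate the equality set back to digits. Points of $(1/4,3/4)$ have first two digits $01$ or $10$, that is, $\varepsilon_n+\varepsilon_{n+1}=1$. Points of $[0,1/4)$ and $(3/4,1]$ have digits $00$ or $11$ and satisfy $h<1$.

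The main obstacle is the boundary and dyadic ambiguity, where binary expansions are not unique. At $t=1/4$ the expansion $0.01000\ldots$ gives equality, but $0.00111\ldots$ has $\varepsilon_n+\varepsilon_{n+1}=0$ and still $h=1$. The same happens at $t=3/4$ with $0.10111\ldots$ versus $0.11000\ldots$. So the ``only if'' direction fails literally for these non-terminating expansions. I would handle this by fixing the convention that the expansion is the terminating (finite) one for dyadic $x$. With that convention, $t=1/4$ and $t=3/4$ carry the digits $01$ and $11$ respectively. The digit pair $11$ at $t=3/4$ with $h=1$ still breaks the equivalence, so I would handle these endpoint cases explicitly and state the precise convention that makes the lemma true.

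A second, smaller point is $x=1$ (or $t$ arising as an integer). Here $t=0$, both distances vanish, and the digits are $0$, so this case is consistent with the lemma.
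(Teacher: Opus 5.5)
Your proof is correct and follows the paper's route. The paper likewise rescales, writing $g_n(x)+g_{n+1}(x)=2^{-(n-1)}(g_1+g_2)(2^{n-1}x)$, and reads off the case $n=1$ from the graph of $G_2=\tfrac12 h$; your explicit piecewise computation of $h$ makes that step rigorous. There is one small slip: $x=1$ has only the expansion $0.111\ldots$, so its digits are ones, not zeros, but the case is still consistent because $h$ is $1$-periodic and $h(0)=0$.

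Your endpoint remark is a real point that the paper glosses over. No fixed convention saves the ``only if'' direction: terminating expansions fail when $\{2^{n-1}x\}=3/4$, and non-terminating ones fail when $\{2^{n-1}x\}=1/4$. The accurate statement is that equality holds iff $\{2^{n-1}x\}\in[1/4,3/4]$, i.e.\ iff \emph{some} binary expansion of $x$ has $\varepsilon_n+\varepsilon_{n+1}=1$. This is harmless for the paper, which only uses the ``if'' direction, at the points $z_n$ whose digit pairs are $10$.
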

\begin{proof}
It is clear that $g_1(x)+g_2(x)\leq 1/2$, and the equality holds if and only if $\varepsilon_1+\varepsilon_{2}=1$ (see Figure \ref{Fig_grafoG2} below). The functions $g_1$ and $g_2$ may be extended periodically to the whole real line with period one, so we have 
$$
g_n(x)+g_{n+1}(x)=\frac{1}{2^{n-1}}g_1\left (2^{n-1}x\right )+\frac{1}{2^{n-1}}g_2\left (2^{n-1}x\right )
$$
and observe that $2^{n-1}x=\varepsilon_1\cdots\varepsilon_{n-1}.\varepsilon_n\varepsilon_{n+1}\cdots$, which gives us the result.
\end{proof}

\begin{figure}[h]
	\begin{center}
		 \includegraphics[width=0.7\linewidth]{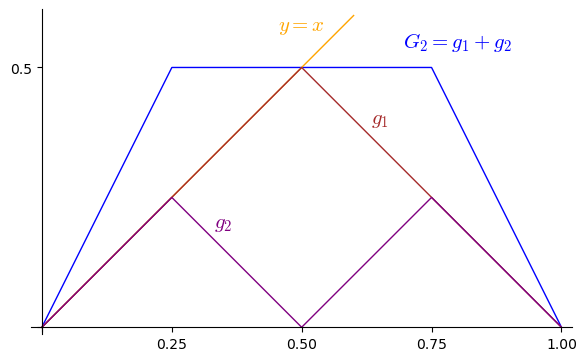}
		\caption{The graph of the function $G_2=g_1+g_2$}
		\label{Fig_grafoG2}
	\end{center}
\end{figure}

Let $z_0=0$. For every $n$, we consider the dyadic point
\begin{equation}\label{z_n}
z_n=\sum_{j=1}^{2n}\frac{\varepsilon_j}{2^j}=\sum_{k=1}^{n}\frac{1}{2^{2k-1}}=\frac{2}{3}\left ( 1-2^{-2n}\right )
\end{equation}
satisfying that $\varepsilon_{2k-1}=1$ and $\varepsilon_{2k}=0$ for  every $1\leq  k \leq  n$. Thus, the binary expansion of $z_n$ consists of $n$ blocks, each composed of the two digits `10', that is 
$$
z_n=0.\underbrace{1010\cdots 10}_{n\, \text{blocks}}
$$
\begin{lemma}\label{G_2n}
For every $n$, the function $G_{2n}$ has the following properties:
\begin{enumerate}
\item[(1)]  The set of fixed points of $G_{2n}$ is given by $\{z_0, z_1, \ldots, z_n \}$.
\item [(2)] The maximum value of $G_{2n}$ is $z_n$.
\item[(3)] For every $1\leq j \leq n$ we have $G_{2n}(y)>y$ for all $y\in (z_{j-1}, z_j)$.
\end{enumerate}
\end{lemma}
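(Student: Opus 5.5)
The plan is to pair consecutive terms and use Lemma \ref{2en2}, then run an induction on $n$ that exploits the scaling relation $g_{k+2}(x)=\tfrac14 g_k(4x)$. Here each $g_k$ is extended periodically with period one, exactly as in the proof of Lemma \ref{2en2}.

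\emph{Property (2).} Write $G_{2n}=\sum_{k=1}^{n}(g_{2k-1}+g_{2k})$. Lemma \ref{2en2} gives
$$
G_{2n}(x)\le \sum_{k=1}^n 2^{-(2k-1)}=z_n,
$$
with equality iff $\varepsilon_{2k-1}+\varepsilon_{2k}=1$ for all $k\le n$. Equality is attained at $x=z_n$ itself, so the maximum of $G_{2n}$ is $z_n$. As a by-product, no $x>z_n$ can be fixed, since there $G_{2n}(x)\le z_n<x$.

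\emph{Properties (1) and (3).} I will prove both simultaneously by induction on $n$, in the stronger form: $G_{2n}(x)=x$ for $x\in\{z_0,\dots,z_n\}$, $G_{2n}(x)>x$ on each $(z_{j-1},z_j)$, and $G_{2n}(x)<x$ for $x>z_n$.

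For the base case, use the explicit graph of $G_2$ (Figure \ref{Fig_grafoG2}): it equals $2x$ on $[0,1/4]$, $1/2$ on $[1/4,3/4]$, and $2(1-x)$ on $[3/4,1]$. The three claims follow directly, with $z_0=0$ and $z_1=1/2$.

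For the inductive step, use the identity
$$
G_{2n+2}(x)=G_2(x)+\tfrac14 G_{2n}(4x \bmod 1),
$$
and split $[0,1]$ into quarters.
\begin{enumerate}
\item[(a)] On $(0,1/4]$: $G_{2n+2}(x)\ge 2x>x$, and $x=0$ is fixed.
\item[(b)] On $[1/4,1/2]$: $G_{2n+2}(x)\ge 1/2\ge x$. Equality forces $x=1/2$, and indeed $G_{2n}(1)=0$, so $1/2=z_1$ is fixed.
\item[(c)] On $[1/2,3/4]$: write $x=1/2+t/4$ with $t\in[0,1]$. Then $G_{2n+2}(x)-x=\tfrac14\,(G_{2n}(t)-t)$. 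By the inductive hypothesis the fixed points here are $x=1/2+z_j/4=z_{j+1}$ for $0\le j\le n$, using \eqref{z_n}. The sign of $G_{2n+2}(x)-x$ on the intervals $(z_j,z_{j+1})$, and beyond $z_{n+1}$, transfers from that of $G_{2n}(t)-t$.
\item[(d)] On $[3/4,1]$: using (2) for $G_{2n}$, $G_{2n+2}(x)\le \tfrac12+\tfrac{z_n}{4}<\tfrac12+\tfrac16=\tfrac23<x$.
\end{enumerate}
Combining (a)--(d) gives the stronger statement for $n+1$, hence (1) and (3).

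The main point needing care is the scaling identity together with the bookkeeping in (c). One must check that $4x\bmod 1=t$ exactly on the quarter $[1/2,3/4]$, including the right endpoint, where $x=3/4$ corresponds to $t=1$ and $G_{2n}(1)=0$. One must also check that the intervals $(z_{j},z_{j+1})$ for $j\ge1$ all lie inside $[1/2,3/4)$, while $(z_0,z_1)=(0,1/2)$ is handled by (a) and (b).
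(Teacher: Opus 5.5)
Your proof is correct, but your inductive step takes a different route from the paper's.

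\emph{The paper's route.} It peels off the finest scale, $G_{2(n+1)}(y)=G_{2n}(y)+2^{-2n}G_2(2^{2n}y)$. Since $G_{2(n+1)}\ge G_{2n}$, the inequalities $G_{2(n+1)}(y)>y$ on $(z_{j-1},z_j)$ for $j\le n$ are inherited for free. Only the new interval $(z_n,z_{n+1})$ needs an argument, and that argument uses two facts: $G_{2n}\equiv z_n$ on $[z_n,z_n+2^{-2n}]$, and $2^{2n}y$ modulo $1$ lies in $(0,1/2)$ there. The bound $G_{2(n+1)}\le z_{n+1}$ comes from the inductive hypothesis together with $G_2\le 1/2$, and the equality $G_{2(n+1)}(z_{n+1})=z_{n+1}$ comes from Lemma \ref{2en2}. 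Your direct pairing argument for (2) removes the induction from that part.

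\emph{Your route.} You peel off the coarsest scale instead, $G_{2n+2}=G_2+\tfrac14 G_{2n}(4\,\cdot)$. The payoff is the exact conjugacy $G_{2n+2}(x)-x=\tfrac14\bigl(G_{2n}(t)-t\bigr)$ for $x=\tfrac12+\tfrac t4$. This transports the entire sign pattern of $G-\mathrm{id}$ on $[1/2,3/4]$ in one step, including the region beyond the last fixed point. The recursion $z_{j+1}=\tfrac12+\tfrac{z_j}{4}$, a contraction with fixed point $2/3$, then explains where the new fixed points come from. This identity is the finite-level version of the self-affinity $T(z_1+y/4)=z_1+T(y)/4$ that the paper exploits later. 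Your strengthened hypothesis ($G_{2n}(x)<x$ for $x>z_n$) also makes the exclusion of all other fixed points explicit. The paper leaves that step to the maximum bound combined with (3), which it proves after asserting the fixed-point set.

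\emph{What the paper's route buys.} Its monotone comparison is exactly the mechanism reused right afterwards in Theorem \ref{fixedPoints}, where $T\ge G_{2n}$ transfers (3) to the Takagi function itself.
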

\begin{proof}
We proceed by induction. When $n=1$ we have $z_0=0$ and $z_1=1/2$. The maximum value of $G_{2}$ is $1/2$ by Lemma \ref{2en2}. Moreover, we have $G_2(0)=0$ and $G_2(1/2)=g_1(1/2)+g_2(1/2)=1/2$ by Lemma \ref{2en2} again. Note that $G_2(y)>y$ for all $y\in (0,1/2)$.

Now, assume that the result holds true for some  $n$ and we will prove it for $n+1$. As previously mentioned, the functions $g_1$ and $g_2$ can be extended periodically to the whole real line. Hence, for every $y\in  [0,1]$ we have $g_{2n+1}(y) = 2^{-2n}g_1(2^{2n}y)$ and $g_{2n+2}(y) = 2^{-2n}g_2(2^{2n}y)$, so we may write
\begin{equation}\label{eqAux1}
G_{2(n+1)}(y)= G_{2n}(y)+ g_{2n+1}(y)+g_{2n+2}(y) =G_{2n}(y)+ \frac{1}{2^{2n}}G_2(2^{2n}y). 
\end{equation}
The induction hypothesis yields that $G_{2(n+1)}(y)\leq z_n+ 2^{-(2n+1)} = z_{n+1}$ for all $y\in [0,1]$. Furthermore, by Lemma \ref{2en2} we obtain 
\begin{align*}
G_{2(n+1)}(z_{n+1})&=\sum_{k=1}^{2(n+1)}g_k(z_{n+1})=\sum_{j=1}^{n+1}g_{2j-1}(z_{n+1})+g_{2j}(z_{n+1})\\&=\sum_{j=1}^{n+1} \frac{1}{2^{2j-1}}=z_{n+1}.
\end{align*} 
This gives that the maximum value of $G_{2(n+1)}$ is $z_{n+1}$ and  we also conclude that the set of fixed points of $G_{2(n+1)}$ is given by $\{z_0, z_1, \ldots, z_{n+1} \}$ since $G_{2(n+1)}(z_j)= G_{2n}(z_j)$ for every $0\leq j \leq n$. Finally, for every $1\leq j \leq n$ we have $G_{2(n+1)}(y)\geq G_{2n}(y)\geq G_{2j}(y)>y$ for all $y\in (z_{j-1}, z_j)$. If $y\in(z_n, z_{n+1})$ then $0< 2^{2n}y<1/2$ and by \eqref{eqAux1} we obtain $G_{2(n+1)}(y)>y$. 
\end{proof}


\begin{thm}\label{fixedPoints}
The set of fixed points of the Takagi function is given by 
$$
\text{Fix}\, (T)= \left \{z_n:n\in\mathbb{Z}_{\geq 0}\right \}\cup \{2/3\}.
$$ 
Moreover, for every $n\in \mathbb{N}$ we have $T(y)>y$ for all $y\in (z_{n-1}, z_n)$.
\end{thm}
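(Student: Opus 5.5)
The argument passes from $G_{2n}$ to $T$ using Lemma \ref{G_2n} and the monotone uniform convergence $G_m\uparrow T$. The plan has three steps: show that each $z_n$ and $2/3$ is fixed, show there are no other fixed points, and deduce the inequality $T(y)>y$ on $(z_{n-1},z_n)$.

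First, the fixed points. For $z_n$ with $n\ge 1$, the binary expansion of $z_n$ has $\varepsilon_j=0$ for all $j>2n$. Hence $z_n\in D_{2n+1}\subset D_m$ for all $m\ge 2n+1$, so $g_m(z_n)=0$ for such $m$. It follows that $T(z_n)=G_{2n}(z_n)=z_n$ by Lemma \ref{G_2n}(1); trivially $T(0)=0$. For $2/3=0.101010\cdots$, the digit pairs $(\varepsilon_{2k-1},\varepsilon_{2k})=(1,0)$ satisfy $\varepsilon_{2k-1}+\varepsilon_{2k}=1$. Lemma \ref{2en2} then gives $g_{2k-1}(2/3)+g_{2k}(2/3)=2^{-(2k-1)}$, so summing over $k$ yields $T(2/3)=\sum_k 2^{-(2k-1)}=2/3$.

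Next, the bound on $T$ and the inequality. Since $\max G_{2n}=z_n<2/3$ by Lemma \ref{G_2n}(2), passing to the limit gives $T\le 2/3$ everywhere. Now fix $y\in(z_{n-1},z_n)$. For every $m\ge n$, the points $z_{n-1},z_n$ are consecutive in $\{z_0,\dots,z_m\}$, so Lemma \ref{G_2n}(3) applied to $G_{2m}$ gives $G_{2m}(y)>y$. Monotonicity then gives $T(y)\ge G_{2n}(y)>y$. This strict inequality comes directly from a single fixed index $2n$, so no limit is needed here, and this step is where strictness is preserved. This proves the second claim and shows that no point of $(z_{n-1},z_n)$ is fixed.

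Finally, the remaining points. Since $z_n\uparrow 2/3$, the intervals $(z_{n-1},z_n)$ together with the points $z_n$ cover $[0,2/3)$. It remains to treat $y\in(2/3,1]$, and this is the only place care is needed. Because $T(y)\le 2/3<y$, such $y$ is not fixed. Hence $\text{Fix}(T)$ is exactly $\{z_n\}\cup\{2/3\}$.
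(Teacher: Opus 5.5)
Your proof is correct and follows essentially the same route as the paper: $T(z_n)=G_{2n}(z_n)=z_n$, the strict inequality $T(y)\ge G_{2n}(y)>y$ on $(z_{n-1},z_n)$ from Lemma~\ref{G_2n}, and $T\le 2/3$ to exclude points above $2/3$. The differences are minor. You get $T\le 2/3$ by passing to the limit in Lemma~\ref{G_2n}(2) instead of citing Kahane, and you check $T(2/3)=2/3$ directly via Lemma~\ref{2en2} instead of by continuity along $z_n\to 2/3$, which makes your version slightly more self-contained.
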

\begin{proof}
Recall that $T(0)=0$. For every $n$ we have $T(z_n)=G_{2n}(z_n)=z_n$ and  Lemma \ref{G_2n} gives that $T(y)\geq G_{2n}(y)>y$ for all $y\in (z_{n-1}, z_n)$. The sequence $(z_n)_n$ converges to $2/3$ and by the continuity of the Takagi function, it is also a fixed point. In order to see that there are no more fixed points, suppose that there is $y\in [0,1]\setminus \left (\left \{z_n:n\in\mathbb{Z}_{\geq 0}\right \}\cup \{2/3\}\right )$ such that $T(y)=y$. The maximum value of the Takagi function is $2/3$, so this implies that $y<2/3$.  Since $(z_n)$ is an increasing sequence converging to $2/3$, there is an indice $n\in \mathbb{N}$ such that $z_{n-1}<y<z_{n}$. However, by Lemma \ref{G_2n} again, we have $T(y)\geq G_{2n}(y)>y$ and this is a contradiction.
\end{proof}

After computing the set of fixed points, our next objective is to analyze the behavior of the orbits in the discrete dynamical system \eqref{dds}. This prompts us to introduce the following notation. Let $S$ be the set of points $x\in [0,1]$ satisfying that there are non-negative integers $n_x$ and $k$ such that $T^{n_x}(x)=z_k$. In other words, the set $S$ consists of those points whose orbit eventually lands at a fixed point different from $2/3$.
\begin{proposition}\label{dynamicBehavior}
Let $x\in [0,1]$.The following statements hold:
\begin{enumerate}
\item If $x\notin S$ then $T^n(x)\to 2/3$ as $n\to \infty$.
\item If $x\in D$, then $x\in S$.
\end{enumerate}
\end{proposition}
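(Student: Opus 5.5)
Part (1) will follow from a monotonicity argument based on Theorem \ref{fixedPoints}; part (2) from the finite-sum structure of $T$ on dyadic points.

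For (1), let $x\notin S$ and put $x_n=T^n(x)$. Since $\max T=2/3$, we have $x_n\in[0,2/3]$ for all $n\ge 1$. If some $x_m=2/3$, the orbit stays at the fixed point $2/3$ and we are done, so assume $x_n<2/3$ for all $n\ge1$. Because $x\notin S$, no $x_n$ equals any $z_k$. Hence each $x_n$ ($n\ge 1$) lies in an open interval $(z_{k-1},z_k)$, and Theorem \ref{fixedPoints} gives $x_{n+1}=T(x_n)>x_n$. So $(x_n)_{n\ge1}$ is strictly increasing and bounded by $2/3$, and therefore converges to some $L\le 2/3$. By continuity of $T$, $L$ is a fixed point, so $L=z_k$ for some $k$ or $L=2/3$.

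It remains to exclude $L=z_k$. If $L=z_k$, then eventually $x_n\in(z_{k-1},z_k)$. This by itself is not a contradiction: an increasing sequence can converge to $z_k$ from the left while $T(y)>y$ on $(z_{k-1},z_k)$. The extra input is the local behavior of $T$ just left of $z_k$. Since $z_k$ is dyadic, the self-affine property \eqref{selfAffineProperty} applies at a dyadic point of suitable level. Write the left neighbourhood as $z_k-y2^{-(N-1)}$, or use the symmetry of the pieces. Because $z_k\in D$ is a strict local minimum of $T$, points $y$ slightly to the left of $z_k$ satisfy $T(y)>T(z_k)=z_k$. Then $x_{n+1}=T(x_n)>z_k$ for $x_n$ close enough to $z_k$, which contradicts $x_n<L=z_k$ for all $n$. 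Hence $L=2/3$.

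This local-minimum observation is the key step, and it is the main subtlety. The strict local minimum property at dyadics is stated in the introduction, and we would cite it directly. One point needs care: "strict local minimum" must mean $T(y)>T(z_k)$ for all $y\ne z_k$ in a neighbourhood of $z_k$. We would verify this from \eqref{selfAffineProperty}, using $T(y)>0$ for $y\in(0,1)$ together with the slopes of $G_{N-1}$. If needed, we would instead choose $N$ with $z_k\in D_N$ and use $G'^{-}_{N}(z_k)$ compared with the right-hand slope.

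For (2), let $x\in D$, say $x\in D_m$. Then $g_n(x)=0$ for $n\ge m$, so $T(x)=G_{m-1}(x)$. This is a finite sum of distances to dyadic grids, and each distance is dyadic with denominator at most $2^{m-1}$. So $T(x)\in D_m$ again. The orbit of $x$ therefore stays in the finite set $D_m$, and since $2/3\notin D$ it never equals $2/3$. If $x\notin S$, part (1) would give $T^n(x)\to 2/3$. But an orbit in a finite set that converges must be eventually constant, so its limit would be a point of $D_m$, which $2/3$ is not. Hence $x\in S$.
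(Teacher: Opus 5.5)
Your proof is correct and follows essentially the same route as the paper. In (1) you use monotone convergence of the orbit, via $T(y)>y$ away from the fixed points, and exclude a limit $z_k$ by the strict local minimum of $T$ at dyadic points; in (2) you use invariance of $D_m$ under $T$, so the convergent orbit is eventually constant at a dyadic point, which cannot be $2/3$. The only cosmetic difference is that in (2) you argue by contradiction with part (1), whereas the paper directly observes that the eventual constant value is a dyadic fixed point, hence some $z_k$.
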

\begin{proof}
Since  $T^n(x)\leq 2/3$ for every $n$ and $T(y)\geq y$ for all $0\leq y\leq 2/3$, we have that $T^n(x)$ is a non-decreasing sequence bounded above and hence it converges to a certain point $p\in [0,2/3]$, which is necessarily a fixed point by continuity.

(1) If $x\notin S$ then $T^n(x)\neq z_k$ for every non-negative integers $n$ and $k$. Since  every point of the set $\left \{z_n:n\in\mathbb{Z}_{\geq 0}\right \}$ is a dyadic point and the Takagi function has a strict local minimum value at every dyadic point (see \cite{Ka} for a proof), it follows that $p=2/3$.

(2) It is enough to prove that if $x\in D_{k+1}$ for some $k$ then $T(x)\in D_{k+1}$ too. Indeed, this implies that the sequence $T^n(x)$ takes finitely many values, and hence it has to become constant eventually since it is a convergent sequence. If $x\in D_{k+1}$ then 
$
x=\sum_{j=1}^{k}\varepsilon_j2^{-j}
$
with $\varepsilon_j\in\{0,1\}$
and $g_j(x)=0$ for all $j\geq k+1$. Thus, we have
$$
T(x)=\sum_{j=1}^kg_j(x)=\sum_{j=1}^{k}\left ( (1-\varepsilon_j)\sum_{m=j}^k \frac{\varepsilon_m}{2^m}+ \varepsilon_j \left [2^{-(j-1)}- \sum_{m=j}^k \frac{\varepsilon_m}{2^m}\right ]\right )
$$
so it follows that $T(x)$ is a finite sum of dyadic numbers belonging to $D_{k+1}$, and hence it belongs to $D_{k+1}$ too.
\end{proof}

\begin{remark}
It is immediate to see that $T(1/6)=1/2$, so the set $S$ does not consist of dyadic numbers exclusively. 
\end{remark}

\begin{thm}\label{orbit}
For almost every point $x\in [0,1]$ we have $T^n(x)\to 2/3$ as $n\to \infty$.
\end{thm}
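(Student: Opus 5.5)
By Proposition \ref{dynamicBehavior}(1), every $x\notin S$ satisfies $T^n(x)\to 2/3$. It therefore suffices to show that $S$ has Lebesgue measure zero. I will write $S$ as a countable union,
$$
S=\bigcup_{n\ge 0}\bigcup_{k\ge 0}\{x\in[0,1]: T^n(x)=z_k\}=\bigcup_{n,k}(T^n)^{-1}(\{z_k\}),
$$
and prove that each level set $(T^n)^{-1}(\{z_k\})$ is Lebesgue null. For $n=0$ the set is a single point, so only $n\ge 1$ needs work.

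To control the level sets I will use Proposition \ref{NAD1}, which says $f=T^n$ is nowhere approximately differentiable. Fix $c=z_k$ and $E=f^{-1}(\{c\})$, and suppose $\lambda(E)>0$. By the Lebesgue density theorem, almost every point $x_0\in E$ is a density point of $E$. On $E$ we have $f\equiv c$, so
$$
\frac{f(x)-f(x_0)}{x-x_0}=0 \quad\text{for all } x\in E\setminus\{x_0\}.
$$
Since $E$ has density $1$ at $x_0$, this means $f$ is approximately differentiable at $x_0$ with approximate derivative $0$. This contradicts Proposition \ref{NAD1}, so $\lambda(E)=0$.

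The one point needing care is that the notion of approximate differentiability used in \cite{BLL} matches the one applied here. That notion asks for a measurable set of density $1$ at $x_0$ along which the difference quotients converge; the set $E$ itself serves as that set. I would state this explicitly, noting that it is the standard definition. If \cite{BLL} instead uses approximate limits in the sense of the density of $\{x:|\,\text{quotient}-L|\ge\varepsilon\}$ tending to $0$, the same argument works with $L=0$, since that exceptional set is contained in the complement of $E$.

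Taking the countable union over $n$ and $k$ then gives $\lambda(S)=0$, and hence $T^n(x)\to 2/3$ for almost every $x\in[0,1]$.
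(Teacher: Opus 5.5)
Your proposal is correct and follows the paper's proof essentially verbatim. You reduce via Proposition \ref{dynamicBehavior}(1) to showing $\lambda(S)=0$, and write $S$ as a countable union of the level sets $\{T^n=z_k\}$. You then exclude positive measure for any of them: at a density point, $T^n$ would be approximately differentiable with derivative $0$, contradicting Proposition \ref{NAD1}. Your separate treatment of $n=0$ and your remark on the equivalent definitions of approximate differentiability are harmless extra care; the paper simply indexes over $n\ge 1$, which already suffices since $T(z_k)=z_k$.
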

\begin{proof}
By Proposition \ref{dynamicBehavior} it suffices to show that the set $S$ has measure zero. Indeed, we write 
$$
S=\bigcup_{n \geq 1}\, \bigcup_{k\geq 0}\, S_{n,k}
$$
where $S_{n,k}=\{x\in [0,1]: T^n(x)=z_k\}$. Proceeding towards a contradiction, if $\lambda(S_{n,k})>0$ for some $n$ and $k$, then we pick $x_0\in S_{n,k}$ such that $x_0$ is a density point of $S_{n,k}$ and we have
$$
\lim_{z\to x_0,\, z\in S_{n,k}} \frac{T^n(z)-T^n(x_0)}{z-x_0}=0
$$
which gives that $T^n$ is approximately differentiable at $x_0$. This is a contradiction to the Corollary \ref{NAD1}. Therefore, $\lambda(S_{n,k})=0$ and this proves the result. 
\end{proof}

In view of the previous results, for a given point $x_0\in [0,1]$ either the orbit $\left(T^n(x_0)\right )_n$ converges  to $2/3$, or it eventually lands on a fixed point different from $2/3$. This fully characterizes the behavior of an arbitrary orbit of the discrete dynamical system \eqref{dds}.

\section{The Shadowing Property and the Takagi function: Behavior of a Real Orbit}\label{BehaviorOrbitSection}

This section focuses on the behavior of an orbit that starts very close to a fixed point $z_n$. First, we note a certain symmetry property exhibited by the Takagi function on the interval $[z_n-2^{-2n}, z_n+2^{-2n}]$. Indeed, for every $y\in [0,1]$ we have
\begin{align*}
T\left (z_n-\frac{y}{2^{2n}}\right )&=G_{2n}\left (z_n-\frac{y}{2^{2n}} \right )+\sum_{j=2n+1}^{\infty}g_j(z_n-\frac{y}{2^{2n}})\\&=G_{2n}\left (z_n+\frac{y}{2^{2n}} \right )+\sum_{j=2n+1}^{\infty}g_j(z_n+\frac{y}{2^{2n}})=T\left (z_n+\frac{y}{2^{2n}}\right )
\end{align*}
where we have used the fact that $G_{2n}$ is constant on the interval $[z_n-2^{-2n}, z_n+2^{-2n}]$ and that the function $g_j$ is symmetric with respect to $z_n$ for all $j\geq 2n+1$. Therefore, if $x\in [z_n-2^{-2n}, z_n]$ then $2z_n-x$ belongs to $[z_n, z_n+2^{-2n}]$ and we have $T(x)=T(2z_n-x)$. Throughout the paper, we shall sometimes refer to this property as the local symmetry of the Takagi function at the fixed point $z_n$. 

Let $0<\varepsilon<2/3$ be given. Since the sequence of fixed points $(z_n)_n$, defined in \eqref{z_n}, is stricly increasing and converges to $2/3$, there is $m\in \mathbb{N}$ such that 
\begin{equation}\label{def_m}
z_m=\frac{2}{3}\left (1-2^{-2m} \right ) \in (2/3- \varepsilon/2, 2/3) \quad \text{ and } z_{m-1}\leq 2/3- \varepsilon/2.
\end{equation}
\begin{lemma}\label{epsilonPrime}
Let $0<\varepsilon<2/3$ and $m\in \mathbb{N}$ defined as in \eqref{def_m}.  Then, there is $0<\varepsilon'<\varepsilon/4$ satisfying the following properties:
\begin{enumerate}[label=\small{(\roman*)}]
\item \label{away} If $x\in \left [z_{j}-\varepsilon', z_j +\varepsilon'\right]$ for some $j=0,\ldots, m$, then 
$$
T(x)\geq z_j+100^m\left |x-z_j\right |.
$$
\item \label{outEpsilonNeighbourhood} For every $0<\eta\leq \varepsilon'$, if $x\in [z_{j-1}+\eta, z_j -\eta]$ for some $j=1,\ldots, m$ then 
$$
T(x)\geq x+\eta.
$$
\item \label{notFar} If $x\in [z_{j}-\varepsilon', z_j +\varepsilon']$ for some $j=0,\ldots, m-1$, then $T(x)<z_{j+1}$.
\end{enumerate}
\end{lemma}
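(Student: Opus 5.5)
Each of the three properties holds for all sufficiently small $\varepsilon'$. So the plan is to produce, for each property separately, a threshold below which it holds, and then take $\varepsilon'$ to be the minimum of these thresholds and $\varepsilon/8$.

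For \ref{away} I would use the local behaviour of $T$ at the dyadic point $z_j$. Apply the self-affine property \eqref{selfAffineProperty} with $n$ large: for $N$ large (say $N\ge 2j+1$), $z_j\in D_{N}$. Then for $0\le y\le 1$,
$$T(z_j+y2^{-(N-1)})=T(z_j)+2^{-(N-1)}\bigl[T(y)+yG'^{+}_{N-1}(z_j)\bigr].$$
We have $T(z_j)=z_j$. For $N-1>2j$ the right derivative $G'^{+}_{N-1}(z_j)$ is $\sum_{i\le N-1} g_i'^{+}(z_j)$. Each $g_i$ with $i\ge 2j+2$ has right derivative $+1$ at $z_j$, since $z_j\in D_i$. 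For $i\le 2j+1$, $G_{2j}$ is constant near $z_j$, so those terms sum to $0$, and $g_{2j+1}$ contributes $+1$ by Lemma \ref{2en2}. Hence $G'^{+}_{N-1}(z_j)$ grows linearly in $N$, roughly $N-1-2j$. Since $T(y)\ge0$, for $x=z_j+h$ with $0\le h\le 2^{-(N-1)}$ we get $T(x)-z_j\ge (N-1-2j)\,h$. I choose $N$ so that $N-1-2j\ge 100^m$ for all $j\le m$, and set a threshold $2^{-(N-1)}$. For the left side, $x<z_j$, I use the local symmetry $T(x)=T(2z_j-x)$ established just above when $j\ge1$. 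For $j=0$ the left side falls outside $[0,1]$, and only $x\ge0$ matters (or $T$ is extended evenly). This step is the crux, but it is straightforward once the right derivative of $G_{N-1}$ is computed.

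For \ref{outEpsilonNeighbourhood}, Theorem \ref{fixedPoints} gives $T(y)>y$ on $(z_{j-1},z_j)$. I take a threshold $\varepsilon_1$ so small that on $[z_{j-1},z_{j-1}+\varepsilon_1]$ and $[z_j-\varepsilon_1,z_j]$ part \ref{away} applies, and I assume $\varepsilon'\le\varepsilon_1$. If $x\in[z_{j-1}+\eta,z_{j-1}+\varepsilon']$, then $T(x)\ge z_{j-1}+100^m(x-z_{j-1})\ge x+(100^m-1)\eta\ge x+\eta$. If $x\in[z_j-\varepsilon',z_j-\eta]$, then $T(x)\ge z_j+100^m(z_j-x)\ge x+\eta$. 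On the compact middle set $[z_{j-1}+\varepsilon',z_j-\varepsilon']$, the continuous function $T(x)-x$ has a positive minimum $c_j$. This makes the argument circular, since $c_j$ depends on $\varepsilon'$. To fix this, first pick $\varepsilon_1$ from \ref{away}, let $c=\min_j\min_{[z_{j-1}+\varepsilon_1,z_j-\varepsilon_1]}(T(x)-x)>0$, and require $\varepsilon'\le\min(\varepsilon_1,c)$. Then the middle region is $[z_{j-1}+\varepsilon_1,z_j-\varepsilon_1]$, where $T(x)-x\ge c\ge\varepsilon'\ge\eta$. The end pieces up to distance $\varepsilon_1$ are covered by the \ref{away} estimates above, which were valid for distances up to $\varepsilon_1$.

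For \ref{notFar}, $T$ is continuous, $T(z_j)=z_j<z_{j+1}$, and $j$ ranges over a finite set. So some threshold $\varepsilon_2$ gives $T(x)<z_{j+1}$ on $[z_j-\varepsilon_2,z_j+\varepsilon_2]$. Finally, set $\varepsilon'=\min(\varepsilon_1,c,\varepsilon_2,\varepsilon/8)$.
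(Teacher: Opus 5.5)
Your proof is correct. For (i) it matches the paper. The paper fixes $\varepsilon'=2^{-(2m+100^m+2)}$ and uses $T\ge G_{2m+100^m}$, which is affine on either side of $z_j$ within $\varepsilon'$ with one-sided slopes $\pm(2m+100^m-2j)$. That is exactly the count you get from the self-affine identity together with $T(y)\ge 0$. (The $+1$ from $g_{2j+1}$ comes simply from $z_j\in D_{2j+1}$, not from Lemma \ref{2en2}.)

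For (ii) and (iii) you take a softer route. The paper proves (ii) with the polygonal function $G_{2m+2}$: its one-sided slopes at each $z_j$ are at least $2$ in absolute value and $\varepsilon'<2^{-(2m+2)}$, so the line $y=x+\eta$ can meet its graph only within $\eta$ of some $z_j$. It proves (iii) from $T(z_j+y2^{-2j})=z_j+2^{-2j}T(y)$ combined with Lemma \ref{line}, which gives $T<z_{j+1}$ on all of $[z_j,z_j+\frac{1}{6}\cdot 2^{-2j})$, and then local symmetry.

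You instead get (ii) by compactness. You fix the threshold $\varepsilon_1$ from (i) first and let $c$ be the positive minimum of $T(x)-x$ on the intervals $[z_{j-1}+\varepsilon_1,z_j-\varepsilon_1]$. This correctly removes the circularity you noticed, and the end pieces are covered because your (i) holds up to distance $\varepsilon_1$, not only $\varepsilon'$. You get (iii) from continuity of $T$ at the finitely many fixed points $z_0,\dots,z_{m-1}$.

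The paper's route gives an explicit $\varepsilon'$ depending only on $m$, which makes the later bounds (Proposition \ref{counting}, $N_{\max,\eta}$) concrete. Yours is shorter and needs neither the external Lemma \ref{line} nor the piecewise-linear bookkeeping. It uses only $T(x)>x$ on the gaps, continuity, and the large one-sided slopes at the $z_j$. The rest of the paper uses only (i)--(iii) and $\varepsilon'<\varepsilon/4$, so nothing downstream is lost.
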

\begin{proof}
We choose 
$$
\varepsilon'= 2^{-\left (2m+100^m+2\right )}.
$$
It follows from \eqref{def_m} that
$$
\frac{8}{3}\cdot \varepsilon' < \frac{8}{3}\cdot 2^{-(2m+2)}= \frac{2}{3}\cdot 2^{-2m}=\frac{2}{3}-z_m<\frac{\varepsilon}{2}
$$
which yields $\varepsilon'<\varepsilon/4$. Furthermore, for every $j=0,\ldots, m$ the polygonal function $G_{2m+100^m}$ is affine on $[z_j-\varepsilon',z_j]$ and $[z_j,z_j+\varepsilon']$ with 
\begin{equation}\label{bigSlope}
G'^+_{2m+100^m}(z_j)=-G'^-_{2m+100^m}(z_j)\geq 100^m
\end{equation}
and hence if $x\in [z_{j}-\varepsilon', z_j +\varepsilon']$ for some $j=0,\ldots, m$, then 
$$
T(x)\geq G_{2m+100^m}(x)\geq z_j+100^m|x-z_j|,
$$
which yields \ref{away}. 

Now, we prove \ref{outEpsilonNeighbourhood}. Observe that $G'^+_{2m+2}(z_j)=-G'^-_{2m+2}(z_j)\geq 2$ for every $j=0,\ldots, m$. As we have $\varepsilon'<2^{-(2m+2)}$, it is immediate to see that for every $0<\eta\leq \varepsilon'$  each point  of the interval $[0, z_m]$ where the line $y=x+\eta$ intersects the graph of the polygonal function $G_{2m+2}$ belongs to $[z_{j}-\eta, z_j +\eta]$ for some $0\leq j \leq m$, and if $x\in [z_{j-1}+\eta, z_j -\eta]$ for some $j=1,\ldots, m$ then  $G_{2m+2}(x)>x+\eta$, which gives the result.

Finally, we prove \ref{notFar}. Let $j=0,\ldots,m-1$. We have $G'^+_{2j}(z_j)=0$,  and it follows from \eqref{selfAffineProperty} that 
\begin{equation}\label{sapEpsilon'}
T\left (z_j+\frac{y}{2^{2j}}\right )=z_j+\frac{1}{2^{2j}}T(y)
\end{equation}
for every $y\in [0,1]$. Moreover, observe that $z_{j+1}=z_j+2^{-(2j+1)}$. Since 
$$
z_j+\varepsilon'<z_j+ \frac{1}{2^{2m+2}}<z_j+\frac{1}{6}\cdot \frac{1}{2^{2j}},
$$
for $x\in [z_j, z_j+\varepsilon']$ we may write 
$$
x=z_j+\frac{x_0}{2^{2j}}
$$
for some $x_0\in [0,1/6)$. By Lemma  \ref{line} together with \eqref{sapEpsilon'} we obtain 
$$
T(x)=T\left (z_j+\frac{x_0}{2^{2j}}\right )=z_j+\frac{1}{2^{2j}}T(x_0)<z_j+\frac{1}{2}\cdot \frac{1}{2^{2j}}=z_{j+1}
$$
The result follows immediately from the local symmetry of the Takagi function at each $z_n$. 
\end{proof}

\begin{proposition}\label{counting}
Let $\varepsilon>0$. Let $m\in \mathbb{N}$ be defined as in \eqref{def_m} and $\varepsilon'>0$ be given by Lemma \ref{epsilonPrime}. Then, we have the following:
\begin{enumerate}[label=\small{(\roman*)}]
\item \label{OutOfEpsilon'} If $x\in \left [z_{j-1}+\varepsilon',  z_j-\varepsilon'\right ]$ for some $j=1,\ldots, m$, then there is $K\in \mathbb{N}$ such that $T^{K-1}(x)< z_j$ and  $T^{K}(x)\geq z_j$ with 
$$
K\leq \frac{2/3}{\varepsilon'}+1.
$$
\item\label{OutOfEta} For every $0<\eta \leq \varepsilon'$, if $x\in \left [z_j+\eta, z_j+\varepsilon'\right ]$ for some $j=1,\ldots, m-1$, then there is $L\in \mathbb{N}$ such that $T^{L-1}(x)\leq  z_j+\varepsilon'$ and  $T^{L}(x)> z_j+\varepsilon'$ with 
$$
L\leq \frac{-\log \eta}{m\cdot \log 100}+1.
$$ 
\end{enumerate}
\end{proposition}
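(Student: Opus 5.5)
For part \ref{OutOfEpsilon'}, I will use Lemma \ref{epsilonPrime}\ref{outEpsilonNeighbourhood} with $\eta=\varepsilon'$. Let $x\in[z_{j-1}+\varepsilon', z_j-\varepsilon']$. As long as an iterate $T^k(x)$ stays in $[z_{j-1}+\varepsilon', z_j-\varepsilon']$, each step increases it by at least $\varepsilon'$: $T^{k+1}(x)\ge T^k(x)+\varepsilon'$. Since $T(y)\ge y$ on $[0,2/3]$, the orbit is non-decreasing, so it never drops below $z_{j-1}+\varepsilon'$.

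Suppose $T^k(x)<z_j$ for all $k\le N$. The iterates lying in $[z_{j-1}+\varepsilon', z_j-\varepsilon']$ form an initial segment of the orbit, by monotonicity. Each such iterate increases the orbit by at least $\varepsilon'$, and all values lie in $[0,2/3]$. Hence at most $\frac{2/3}{\varepsilon'}$ iterates can lie in that interval, so some $T^{k_0}(x)$ with $k_0\le \frac{2/3}{\varepsilon'}$ lands in $(z_j-\varepsilon', z_j)$.

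We then need one more step to cross $z_j$. Here I would invoke Lemma \ref{epsilonPrime}\ref{outEpsilonNeighbourhood} with a smaller $\eta$, namely $\eta=z_j-T^{k_0}(x)\le\varepsilon'$. This is where the counting is delicate, since the bound $K\le \frac{2/3}{\varepsilon'}+1$ leaves room for only one extra step. The cleaner route is to count more carefully. The orbit increases by at least $\varepsilon'$ at each step starting from a point of the interval. Starting at $x\ge z_{j-1}+\varepsilon'$, after $k$ steps inside the interval we have $T^k(x)\ge x+k\varepsilon'$. So at most $\lfloor (z_j-z_{j-1}-2\varepsilon')/\varepsilon'\rfloor+1$ consecutive iterates lie in the interval, and the next iterate is $\ge z_j-\varepsilon'+\varepsilon'=z_j$. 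Indeed, the last point $y$ in the interval satisfies $T(y)\ge y+\varepsilon'$, and we need $y+\varepsilon'$ to exceed $z_j$ unless $T(y)$ falls in $(z_j-\varepsilon',z_j)$.

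To close this gap I would strengthen the step slightly. For $y\in[z_{j-1}+\varepsilon', z_j-\varepsilon']$, the polygonal bound in the proof of \ref{outEpsilonNeighbourhood} gives $G_{2m+2}(y)\ge y+\varepsilon'$ with slopes $\ge2$ near $z_j$. So a point in $(z_j-\varepsilon',z_j)$ is reached only from $y$ with $T(y)<z_j$. I would then apply \ref{outEpsilonNeighbourhood} with $\eta=z_j-y'$ on the whole interval $[z_{j-1}+\eta,z_j-\eta]$, which contains all previous iterates, to get a uniform increment. The first $K$ with $T^K(x)\ge z_j$ exists because the orbit converges to a fixed point $\ge z_j$: there are no fixed points in $(z_{j-1},z_j)$ by Theorem \ref{fixedPoints}. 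The counting then gives $K\le \frac{2/3}{\varepsilon'}+1$. I expect this bookkeeping near $z_j$ to be the main obstacle.

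For part \ref{OutOfEta}, I will use Lemma \ref{epsilonPrime}\ref{away}. If $y\in[z_j,z_j+\varepsilon']$ with $y-z_j=d$, then $T(y)-z_j\ge 100^m d$. Starting from $d_0=x-z_j\ge\eta$ and iterating while the orbit stays in $[z_j+\eta,z_j+\varepsilon']$, we get $T^k(x)-z_j\ge 100^{mk}\eta$. Since $\varepsilon'<1$, the orbit must leave $[z_j, z_j+\varepsilon']$ once $100^{mk}\eta>\varepsilon'$, which certainly happens once $100^{mk}\eta\ge 1$, i.e.\ for $k\ge \frac{-\log\eta}{m\log 100}$. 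Thus the first exit index $L$ satisfies $L\le \frac{-\log\eta}{m\log100}+1$. By monotonicity the orbit exits upward, giving $T^{L-1}(x)\le z_j+\varepsilon'<T^L(x)$.
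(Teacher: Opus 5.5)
Part (ii) is exactly the paper's argument. Part (i) follows the paper's skeleton: \ref{outEpsilonNeighbourhood} in Lemma \ref{epsilonPrime} with $\eta=\varepsilon'$ gives increments of at least $\varepsilon'$, so the first iterate $T^{k_0}(x)$ leaving $[z_{j-1}+\varepsilon',z_j-\varepsilon']$ has $k_0\le\frac{2/3}{\varepsilon'}$, and then one more step suffices. The step you flag as the main obstacle is not one, and your own first suggestion already settles it. If $T^{k_0}(x)<z_j$, put $\eta=z_j-T^{k_0}(x)\in(0,\varepsilon')$. Then $T^{k_0}(x)=z_j-\eta$ is the right endpoint of $[z_{j-1}+\eta,z_j-\eta]$, so $T^{k_0+1}(x)\ge z_j$ and $K=k_0+1\le\frac{2/3}{\varepsilon'}+1$. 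If already $T^{k_0}(x)\ge z_j$, take $K=k_0$. One extra step is exactly what the bound allows. The paper closes this step with \ref{away} in Lemma \ref{epsilonPrime}. You used that part in (ii) only to the right of $z_j$, but it covers all of $[z_j-\varepsilon',z_j+\varepsilon']$: every $y\in(z_j-\varepsilon',z_j)$ satisfies $T(y)\ge z_j+100^m(z_j-y)>z_j$.

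Everything you write in (i) after that first suggestion should be dropped, since it is wrong or unnecessary:
\begin{itemize}
\item The ``cleaner route'' claim that the iterate after the last one in the interval is $\ge z_j-\varepsilon'+\varepsilon'=z_j$ is false, as you yourself notice. That last iterate can lie well below $z_j-\varepsilon'$.
\item Applying \ref{outEpsilonNeighbourhood} with the small $\eta=z_j-y'$ as a ``uniform increment'' for all earlier iterates gives steps of size only $\eta$. That yields a bound of order $\frac{2/3}{\eta}$, not $\frac{2/3}{\varepsilon'}+1$.
\item Convergence of the orbit to a fixed point $\ge z_j$ does not by itself produce an iterate $\ge z_j$, because the limit could a priori be $z_j$ approached from below. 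That possibility is excluded precisely by the one-step fact above, and the counting argument already gives existence of $K$ anyway.
\end{itemize}
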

\begin{proof}
First, we prove \ref{OutOfEpsilon'}. Proceeding by induction, we show that 
\begin{equation}\label{ind1}
T^{l}(x)\geq z_{j-1}+ (l+1)\cdot \varepsilon'
\end{equation}
for every $l\in \mathbb{N}$ such that $T^{l-1}(x)\leq  z_j-\varepsilon'$. Indeed, if $l=1$ then by  \ref{outEpsilonNeighbourhood} in Lemma \ref{epsilonPrime} used with $\eta=\varepsilon'$ we have
$$
T(x)\geq x+\varepsilon'>z_{j-1}+2\varepsilon'.
$$
Now, assume that the property holds for some $l\geq 1 $ and  $T^{l}(x)\leq z_j-\varepsilon'$. We will prove it for $l+1$. We have 
\begin{align*}
T^{l+1}(x)=T\left (T^{l}(x)\right )&\geq T^{l}(x)+\varepsilon' \geq z_{j-1}+(l+2)\cdot\varepsilon'
\end{align*} 
where we used that $z_{j-1}+\varepsilon'\leq x<T^{l}(x)$ and \ref{outEpsilonNeighbourhood} in Lemma \ref{epsilonPrime}  again.  

It follows from \eqref{ind1} that there is $l_0\in \mathbb{N}$ such that $T^{l_0}(x)>z_j-\varepsilon'$ and $T^{l_0-1}(x)\leq  z_j-\varepsilon'$. By \eqref{ind1} again we have 
$$
z_{j-1}+ l_0\cdot \varepsilon'\leq T^{l_0-1}(x)\leq z_j-\varepsilon' < \frac{2}{3}
$$
which implies $l_0\leq \frac{2/3}{\varepsilon'}$. Finally, if $T^{l_0}(x)\geq z_j$ then we take $K=l_0$. Otherwise we have $T^{l_0+1}(x)\geq z_j$ by \ref{away} in Lemma \ref{epsilonPrime}, so we take $K=l_0+1$.

Our next task is to prove \ref{OutOfEta}. Proceeding by induction as we did before, and taking advantage of \ref{away} in Lemma \ref{epsilonPrime}, we have that 
\begin{equation}\label{ind2}
T^{l}(x)\geq z_j + \eta\cdot 100^{ml}
\end{equation}
for every $l\in \mathbb{N}$  such  that $ T^{l-1}(x)\leq z_j+\varepsilon'$. This implies that there is $l_1\in \mathbb{N}$ such that $T^{l_1}(x)>z_j+\varepsilon'$ and $T^{l_1-1}(x)\leq  z_j+\varepsilon'$. By \eqref{ind2} again we obtain
$$
z_j + \eta\cdot 100^{m(l_1-1)}\leq T^{l_1-1}(x)\leq  z_j+\varepsilon'<1
$$
and hence
$$
l_1\leq \frac{-\log \, \eta}{m\cdot \log\, 100}+1
$$
which gives us the result.


\end{proof}

%


We are interested in determining the behavior of the orbit of a point that is very close to a fixed point. To describe this behavior, we introduce a definition that will play a crucial role in the subsequent development.
\begin{defn}\label{jump}
Let $\eta>0$, $N\in \mathbb{N}$ and $u,v\in \text{Fix}\, (T)$ with $u<v$. A point $x\in [u-\eta, u+\eta]$ makes a $(u,v,\eta)$-\emph{jump} of order $N$ if 
\begin{enumerate}[itemsep=0.1em,label=\small{(\roman*)}]
\item $T^{N}(x), T^{N+1}(x)\in [v-\eta, v+\eta]$, and 
\item for every $z\in \text{Fix}\, (T)\setminus \{u,v\}$ we have $$\left \{T^n(x), T^{n+1}(x)\right \}\not \subset \left [z-\eta, z+\eta\right ]$$
provided that $n\leq N$.
\end{enumerate}
\end{defn}

\begin{proposition}\label{behavior}
Let $\varepsilon>0$. Let $m\in \mathbb{N}$ be defined as in \eqref{def_m} and $\varepsilon'>0$ be given by Lemma \ref{epsilonPrime}.  Let $0<\eta<\varepsilon'$. If $x\in [z_j-\eta, z_j+\eta]$ for some $j=0,\ldots, m-1$ and $T^2(x)> z_j+\eta$, then there exists $N=N(x)\in \mathbb{N}$ such that
$$
N\leq \frac{-\log\, \eta }{\log \, 100} +3m+ \frac{2/3}{\varepsilon'}m
$$
and  exactly one of the following holds: 
\begin{enumerate}[label=\small{(\roman*)}]
       \item\label{case1}  We have $T^{N-1}(x)< z_m$ and $T^N(x)\geq z_m$.
	\item \label{case2} There is an index $j<\omega\leq m-1$ such that $x$ makes a $(z_j,z_\omega,\eta)$-\emph{jump} of order $N$.
\end{enumerate}
\end{proposition}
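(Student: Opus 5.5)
We follow the orbit of $x$ as it climbs through the fixed points, using the fact (Theorem \ref{fixedPoints} and Proposition \ref{dynamicBehavior}) that on $[0,2/3]$ the orbit is non-decreasing. There are three phases: escape from the $\eta$-neighbourhood of $z_j$, then successive passages through the $\varepsilon'$-windows around the higher fixed points, and finally either reaching $z_m$ or getting caught near some $z_\omega$. We count iterates phase by phase.

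\emph{Step 1: escaping $z_j$.} By \ref{away} of Lemma \ref{epsilonPrime}, every $y\in[z_j-\eta,z_j+\eta]$ satisfies $T(y)\ge z_j$. So $T(x)\ge z_j$, and since $T^2(x)>z_j+\eta$, the orbit leaves the $\eta$-window after at most two steps. To move from $z_j+\eta$ up to $z_j+\varepsilon'$, we use \eqref{ind2} in the proof of Proposition \ref{counting}\ref{OutOfEta}: each step multiplies the distance to $z_j$ by at least $100^m$. This takes at most $\frac{-\log\eta}{m\log 100}+1\le\frac{-\log\eta}{\log 100}+1$ steps. By \ref{notFar}, once the orbit exceeds $z_j+\varepsilon'$ it still lies below $z_{j+1}$. (If it already lies in $(z_j+\varepsilon', z_{j+1})$, so much the better.) For $j=0$ we use \ref{away} directly, since Proposition \ref{counting} is stated only for $j\ge1$.

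\emph{Step 2: the passage from $z_{k-1}$ to $z_k$, for $k=j+1,\dots,m$.} Suppose the orbit point $y$ lies in $(z_{k-1}+\varepsilon', z_k)$. If $y\le z_k-\varepsilon'$, then Proposition \ref{counting}\ref{OutOfEpsilon'} gives at most $\frac{2/3}{\varepsilon'}+1$ steps until the orbit reaches $[z_k-\varepsilon',\,\cdot\,]$, with the first iterate $\ge z_k$ following within one more step. Now consider the orbit near $z_k$. If two consecutive iterates both lie in $[z_k-\eta,z_k+\eta]$ and $k\le m-1$, we stop: this is case \ref{case2} with $\omega=k$ and $N$ the index of the first of the two. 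Condition (ii) of Definition \ref{jump} holds because, at earlier times, consecutive iterates were never both within $\eta$ of a fixed point other than $z_j$ (or $z_k$ at the stopping time). If $k=m$, we stop in case \ref{case1} at the first $N$ with $T^N(x)\ge z_m$. Otherwise, at most one iterate lies within $\eta$ of $z_k$. The next iterate is then $>z_k+\eta$ and, by \ref{away} and \ref{notFar}, still $<z_{k+1}$. Repeating Step 1 at $z_k$ costs at most $\frac{-\log\eta}{m\log 100}+1$ steps to exceed $z_k+\varepsilon'$.

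\emph{Step 3: counting.} Summing over at most $m$ levels gives
\[
N\le 2+\sum_{k}\Bigl(\frac{-\log\eta}{m\log100}+1\Bigr)+m\Bigl(\frac{2/3}{\varepsilon'}+2\Bigr).
\]
The logarithmic terms total at most $\frac{-\log\eta}{\log100}+1$, since there are at most $m$ of them. The remaining constants fit within $3m+\frac{2/3}{\varepsilon'}m$ after a little care with the $+1$'s.

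\emph{Exactly one case holds.} In case \ref{case2}, $T^N(x)$ and $T^{N+1}(x)$ are $\le z_\omega+\eta<z_m$, so case \ref{case1} fails. Conversely, in case \ref{case1}, $T^N(x)\ge z_m$ is far from every $z_\omega$ with $\omega\le m-1$, so case \ref{case2} fails at this $N$.

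\emph{Main obstacle.} The delicate point is the bookkeeping near each intermediate $z_k$. The orbit may enter $[z_k-\varepsilon',z_k]$ at distance less than $\eta$, or it may land in $[z_k,z_k+\eta]$ and stay there for one step. We must verify that a single landing inside the $\eta$-window does not trap the orbit. Using \ref{away} together with local symmetry, that landing is followed by an iterate in $[z_k+\eta,z_k+\varepsilon']$ or beyond, with $T<z_{k+1}$ by \ref{notFar}. We also need the constants to absorb these extra steps within the $3m$ term.
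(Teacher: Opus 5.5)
Your argument follows the paper's proof. You climb level by level. You stop at the first $z_k$ with $k\le m-1$ whose $\eta$-window contains two consecutive iterates, or at the first iterate $\ge z_m$. You bound the time spent at each level by Proposition \ref{counting}.

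One step, however, fails as written. It is the same step the paper's proof uses when it deduces $T^{n_k}(x)<z_{j_k+1}$ from Lemma \ref{epsilonPrime}(iii). In Step 2 you assert that when the orbit passes $z_k$ with at most one iterate in $[z_k-\eta,z_k+\eta]$, the first iterate above $z_k+\eta$ is still $<z_{k+1}$. But Lemma \ref{epsilonPrime}(iii) only controls images of points of $[z_k-\varepsilon',z_k+\varepsilon']$, and nothing places the last iterate below $z_k$ there. Proposition \ref{counting}(i) only gives $T^{K-1}(x)<z_k$, and in the case $K=l_0$ of its proof one has $T^{K-1}(x)\le z_k-\varepsilon'$.

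From such a point the next iterate is not controlled at all. Since $G'^+_{2(k-1)}(z_{k-1})=0$, \eqref{selfAffineProperty} gives $T(z_{k-1}+y4^{-(k-1)})=z_{k-1}+4^{-(k-1)}T(y)$ for $y\in[0,1]$. Hence $T$ maps $[z_{k-1},z_k]$ onto $[z_{k-1},2/3]$; for example, $T(z_{k-1}+\tfrac{1}{3}\cdot 4^{-(k-1)})=2/3$. By Lemma \ref{InfinitelyPreimages}, this point lies on the orbit of an admissible $x$ near $z_{k-1}$ (take $j=k-1\le m-2$). So the orbit can jump from below $z_k-\varepsilon'$ over $z_{k+1}$ and further fixed points. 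It may land above $z_m$, or in the $\eta$-window of some $z_{k'}$ with $k'\ge k+2$. Your ``Step 1 at $z_k$, then Step 2 for $k+1$'' then starts from a false hypothesis and can miss the correct stopping time.

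The repair is bookkeeping. Let $t$ be the first time with $T^t(x)>z_k+\eta$. If $T^t(x)\ge z_m$, stop in case (i) with $N=t$; note $T^{t-1}(x)<z_m$. Otherwise, restart Steps 1--2 from the actual position of $T^t(x)$, i.e.\ at the index $k''\ge k$ with $T^t(x)\in(z_{k''}+\eta,z_{k''+1}+\eta]$. In particular, check the pair $T^t(x),T^{t+1}(x)$ if $T^t(x)$ already lies within $\eta$ of $z_{k''+1}$, rather than passing automatically to $z_{k+1}$. Levels still strictly increase, so each of at most $m$ levels contributes at most one logarithmic escape and one $\varepsilon'$-climb. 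The fixed points jumped over have no iterate in their $\eta$-windows, because the orbit is non-decreasing. So condition (ii) of Definition \ref{jump} is unaffected.

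Separately, the arithmetic in Step 3 is off. $m$ copies of $\frac{-\log\eta}{m\log 100}+1$ sum to $\frac{-\log\eta}{\log 100}+m$, not $\frac{-\log\eta}{\log 100}+1$. So your displayed estimate exceeds the claimed bound by an additive constant. This is harmless, because the bounds of Proposition \ref{counting} are very wasteful. Part (ii) really gives $\frac{\log(\varepsilon'/\eta)}{m\log 100}+1$, with $-\log\varepsilon'$ enormous. The $\varepsilon'$-steps of part (i) number at most $\frac{2/3}{\varepsilon'}$ over all levels together. Still, the ``little care'' has to be actually carried out using this slack.
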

\begin{proof}
We define by induction two finite sequences of indices $n_1,\ldots, n_{p}$ and $j_1,\ldots,j_p$ for some $1\leq p\leq m$. If $T(x)> z_j+\eta$ then we take $x^*=x$, otherwise we take $x^*=T(x)$. Thus, we have $x^*\in [z_j-\eta, z_j+\eta]$ and $T(x^*)>z_j+\eta$. Observe that by \ref{notFar} in Lemma \ref{epsilonPrime}, $T(x^*)<z_{j+1}$. It follows from Proposition \ref{counting} and \ref{away} in Lemma \ref{epsilonPrime} that there is $l_1\in \mathbb{N}$ with 
\begin{equation}\label{proc0}
l_1\leq \frac{-\log \eta}{m\cdot \log 100}+2+\frac{2/3}{\varepsilon'}+1
\end{equation}
such that $T^{l_1}(x)\geq z_{j+1}$ and $T^{l_1-1}(x)<z_{j+1}$. 

If $j+1= m$ then the result follows immediately with $\omega=j+1$ and $N=l_1$, and we let $p=0$. 

Otherwise we have $j+1<m$. 

If $\left \{ T^{l_1-1}(x), T^{l_1}(x)\right \}\subset [z_{j+1}-\eta,z_{j+1}+\eta ]$ then $x$ makes a $(z_j, z_{j+1}, \eta)$-jump of order $l_1-1$ and the result follows with $N=l_1-1$ and $\omega=j+1$, and we let $p=0$. 

Otherwise, if $\left \{ T^{l_1}(x), T^{l_1+1}(x)\right \}\subset [z_{j+1}-\eta,z_{j+1}+\eta ]$ then $x$ makes a $(z_j, z_{j+1}, \eta)$-jump of order $l_1$ and the result follows with $N=l_1$ and $\omega=j+1$, and we also let $p=0$. 

If none of the previous cases apply,  then  $x$ does not make a $(z_j, z_{j+1}, \eta)$-jump of any order and we have either $T^{l_1}(x)>z_{j+1}+\eta$ or $T^{l_1+1}(x)>z_{j+1}+\eta$, and in fact, in both cases we have $T^{l_1+1}(x)>z_{j+1}+\eta$. We let $j_1=j+1$ and we denote by $n_1$ the index such that $T^{n_1}(x)>z_{j_1}+\eta$ and $T^{n_1-1}(x)\leq z_{j_1}+\eta$. Observe that $n_1\leq l_1+1$.

Now, suppose that $n_k$ and $j_k$ have been defined for some $k\geq 1$ satisfying that $j_k<m$,  $T^{n_k}(x)>z_{j_k}+\eta$ and $T^{n_k-1}(x)\leq z_{j_k}+\eta$. It follows from \ref{notFar} in Lemma \ref{epsilonPrime} that $T^{n_k}(x)<z_{j_k+1}$. 

As before, it follows from Proposition \ref{counting} and \ref{away} in Lemma \ref{epsilonPrime} that there is $l_{k+1}\in \mathbb{N}$ with 
\begin{equation}\label{proc1}
l_{k+1}\leq \frac{-\log \eta}{m\cdot \log 100}+2+\frac{2/3}{\varepsilon'}
\end{equation}
such that $T^{n_k+l_{k+1}}(x)\geq z_{j_k+1}$ and $T^{n_k+l_{k+1}-1}(x)<z_{j_k+1}$. 

If $j_k+1= m$ then the result follows with $\omega=j_k+1$ and $N=n_k+l_{k+1}$, and we let $p=k$. 

Otherwise we have $j_k+1<m$. 

If $\left \{ T^{n_k+l_{k+1}-1}(x), T^{n_k+l_{k+1}}(x)\right \}\subset [z_{j_k+1}-\eta,z_{j_k+1}+\eta ]$ then the point $x$ makes a $(z_j, z_{j_k+1}, \eta)$-jump of order $n_k+l_{k+1}-1$ and the result follows with $N=n_k+l_{k+1}-1$ and $\omega=j_k+1$, and we let $p=k$.

 Otherwise, if $\left \{ T^{n_k+l_{k+1}}(x), T^{n_k+l_{k+1}+1}(x)\right \}\subset [z_{j_k+1}-\eta,z_{j_k+1}+\eta ]$ then $x$ makes a $(z_j, z_{j_k+1}, \eta)$-jump of order $n_k+l_{k+1}$ and the result follows with $N=n_k+l_{k+1}$ and $\omega=j_k+1$, and we also let $p=k$. 

If none of the previous cases apply,  then  $x$ does not make a $(z_j, z_{j_k+1}, \eta)$-jump of any order and we have either $T^{n_k+l_{k+1}}(x)>z_{j_k+1}+\eta$ or $T^{n_k+l_{k+1}+1}(x)>z_{j_k+1}+\eta$. We let $j_{k+1}=j_k+1$ and we denote by $n_{k+1}$ the index such that $T^{n_{k+1}}(x)>z_{j_{k+1}}+\eta$ and $T^{n_{k+1}-1}(x)\leq z_{j_{k+1}}+\eta$. Finally, note that 
\begin{equation}\label{proc2}
n_{k+1}\leq n_k+l_{k+1}+1 \leq k+1+ l_1+l_2+\cdots + l_k+l_{k+1}.
\end{equation}
This process can be repeated $p$ times, with $p\leq m-1$. As a consequence, we obtain $N\in \mathbb{N}$ such that \ref{case1} or \ref{case2} holds and by using \eqref{proc0}, \eqref{proc1} and \eqref{proc2} we have
\begin{align*}
N&\leq n_p+l_{p+1} \leq p+l_1+\cdots+l_{p+1}\\&\leq p+1-\frac{\log \eta}{m\cdot \log 100}+2+\frac{2/3}{\varepsilon'}+l_2+\cdots+l_{p+1}\\&\leq   m+ \frac{-\log\, \eta }{\log \, 100} +2m+ \frac{2/3}{\varepsilon'}m.
\end{align*}
\end{proof}

\begin{lemma}\label{fix}
Let $\varepsilon>0$ and $z\in \text{Fix}\, (T)$. If $x\in [0,1]$ is such that $T^{N-1}(x)=T^{N}(x)=z$ for some $N\in \mathbb{N}$, then there exist $0<r<\varepsilon$ and $\eta>0$ satisfying
\begin{enumerate}[label=\small{(\roman*)}]
	\item $\lambda \left (T^n \big ([x-r, x+r]\big )\right )<\varepsilon$ for every $ n=0,\ldots, N$, and
	\item $[z, z+\eta]\subset T^N ([x-r, x+r])$.
\end{enumerate}
Therefore, there is a closed interval $I\subset [x-r, x+r]$ such that $T^{N}(I)=[z, z+\eta]$, $\text{dist}\, (T^n(x), T^n(I))<\varepsilon$ and $\lambda\left (T^n(I)\right )<\varepsilon$ for all $n=0, \ldots, N$.
\end{lemma}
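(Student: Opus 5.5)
The plan is to get (i) from continuity alone, get (ii) from the fact that $T^{N-1}$ is nowhere constant together with the local behaviour of $T$ at the fixed point $z$, and then extract $I$ by an intermediate value argument.

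\emph{Step 1: (i).} The maps $T^0,\ldots,T^N$ are finitely many continuous maps on the compact interval $[0,1]$, so they are uniformly continuous. Hence there is $0<r<\varepsilon$ such that for every $n\le N$ the image $T^n([x-r,x+r])$ is an interval of diameter less than $\varepsilon$. Here $[x-r,x+r]$ is intersected with $[0,1]$ if necessary. This already gives (i).

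\emph{Step 2: (ii).} Let $J=T^{N-1}([x-r,x+r])$. It is a compact interval containing $T^{N-1}(x)=z$.
\begin{itemize}
\item $J$ is nondegenerate. Otherwise $T^{N-1}$ would be constant on an interval, hence approximately differentiable there, contradicting Proposition \ref{NAD1}. For $N=1$ this is trivial.
\item $T^N([x-r,x+r])=T(J)$, and $T(J)$ is an interval containing $T(z)=z$.
\end{itemize}
If $z=z_k$, then $J$ contains a one-sided neighbourhood of $z_k$. Since $T$ has a strict local minimum at the dyadic point $z_k$, every point $y\neq z_k$ of $J$ close to $z_k$ satisfies $T(y)>z_k$. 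Quantitatively one may use \ref{away} of Lemma \ref{epsilonPrime}: $T(y)\ge z_k+100^m|y-z_k|$. So $T(J)\ni z$ contains a value $z+\eta$ with $\eta>0$, and by connectedness $[z,z+\eta]\subset T(J)$.

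The case I expect to be the real obstacle is $z=2/3$. There $T\le 2/3$, so no interval $[z,z+\eta]$ with $\eta>0$ can lie in the image, and the statement as written can only hold for $z\in\{z_k\}$. For $z=2/3$ the same argument gives $[z-\eta,z]\subset T(J)$: $J$ is nondegenerate, $2/3$ is not a fixed point of any neighbourhood, and $T(J)$ contains points below $2/3$. I would record that this mirrored version is what gets used later.

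\emph{Step 3: the interval $I$.} Choose $b\in[x-r,x+r]$ with $T^N(b)=z+\eta$, taken as close to $x$ as possible; it exists by compactness. Say $b>x$; the case $b<x$ is symmetric. Let $a=\max\{t\in[x,b]: T^N(t)=z\}$, which exists since $T^N(x)=z$, and set $I=[a,b]$. Then $T^N$ takes neither the value $z$ nor the value $z+\eta$ on $(a,b)$:
\begin{itemize}
\item it avoids $z+\eta$ on $(a,b)$ by the minimality of $b$;
\item it avoids $z$ on $(a,b)$ by the maximality of $a$.
\end{itemize}
By continuity $T^N((a,b))\subset(z,z+\eta)$, and so $T^N(I)=[z,z+\eta]$.

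Finally, for each $n\le N$ both $T^n(x)$ and $T^n(I)$ lie in $T^n([x-r,x+r])$, whose length is less than $\varepsilon$ by (i). This gives $\lambda(T^n(I))<\varepsilon$ and $\text{dist}\,(T^n(x),T^n(I))<\varepsilon$.
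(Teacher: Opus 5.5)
Your proof is correct and follows the same route as the paper: continuity for (i), then nowhere approximate differentiability of $T^{N-1}$ combined with the strict local minimum of $T$ at $z$ for (ii), then extracting $I$ by continuity, where your Step 3 supplies the details the paper leaves implicit. Your caveat about $z=2/3$ is also right, since the paper's proof tacitly uses that $z$ is a dyadic strict local minimum, which holds only for $z=z_k$; however, the lemma is only ever applied at such a point ($z_{j_0}$ with $\alpha<j_0<\omega\le m$ in Theorem~\ref{Lemmam_0}) in its stated right-sided form, not in your mirrored version at $2/3$.
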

\begin{proof}
For every $n=0, \ldots, N$ the function $T^n$ is continuous at $x$, so we can find $\delta_n>0$ satisfying that  $\delta_n<\varepsilon/2$ and $$T^n\big ([x-\delta_n, x+\delta_n]\big )\subset [T^n(x)-\varepsilon/2, T^n(x)+\varepsilon/2].$$
We set $r = \min \{\delta_n: n=0, \ldots, N\}$. Thus,  (1) holds. Moreover, for every $n=0, \ldots, N$ the function $T^n$ is constant on no interval of positive length since by Corollary \ref{NAD1} it is nowhere approximately differentiable, and hence $T^{N-1}\big ([x-r, x+r]\big )$ is a closed interval of positive length which contains $T^{N-1}(x)=z$. Since $z$ is a strict local minimum of the Takagi function,  we have that  $T^{N}\big ([x-r, x+r]\big )$ contains a closed interval of positive length whose left-endpoint is $z$. Therefore, we may take $\eta>0$ such that (2) holds.

Finally, since the function $T^N$ is continuous on $[x-r, x+r]$ and $[z, z+\eta]\subset T^N \big ([x-r, x+r]\big )$ there is a closed interval $I\subset [x-r, x+r]$ such that $T^{N}(I)=[z, z+\eta]$. The last statement follows since $I\subset [x-r, x+r]$ and $T^n$ is continuous on $[x-r, x+r]$ for every $n=0,\ldots,N$.
\end{proof}

\begin{lemma}\label{InfinitelyPreimages}
Let $j\in \mathbb{N}$ and $x\in (z_{j-1},z_j)$. Then, there is a strictly decreasing sequence $(x_n)_n$ with $x_1=x$ satisfying that 
$$
T(x_{n+1})=x_n\quad \text{ and } \quad \lim_{n\to \infty}x_n=z_{j-1}.
$$
\end{lemma}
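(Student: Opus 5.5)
We build the sequence inductively using the intermediate value theorem on $[z_{j-1},x_n]$. By Theorem \ref{fixedPoints}, $z_{j-1}$ is a fixed point and $T(y)>y$ for all $y\in(z_{j-1},z_j)$.

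\emph{Constructing one preimage.} Suppose $x_n\in(z_{j-1},z_j)$ has been defined. Then $T(z_{j-1})=z_{j-1}<x_n$ and $T(x_n)>x_n$, so by continuity there is a point $y\in(z_{j-1},x_n)$ with $T(y)=x_n$. To make the choice canonical and to control the limit, we take
$$
x_{n+1}=\max\{y\in[z_{j-1},x_n]: T(y)=x_n\}.
$$
This set is closed, nonempty, and contains neither $z_{j-1}$ nor $x_n$, so $z_{j-1}<x_{n+1}<x_n$. Hence $x_{n+1}\in(z_{j-1},z_j)$ and the induction continues. The resulting sequence is strictly decreasing, satisfies $T(x_{n+1})=x_n$, and is bounded below by $z_{j-1}$, so it converges to some $p\in[z_{j-1},x)$.

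\emph{Identifying the limit.} Letting $n\to\infty$ in $T(x_{n+1})=x_n$ and using continuity gives $T(p)=p$, so $p$ is a fixed point. The only fixed point in $[z_{j-1},z_j)$ is $z_{j-1}$, since by Theorem \ref{fixedPoints} $T(y)>y$ on $(z_{j-1},z_j)$. Because $p<x<z_j$, we conclude $p=z_{j-1}$. No particular choice of preimage is actually needed for this step; any choice in $(z_{j-1},x_n)$ works.

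The one point requiring care is that each preimage stays strictly inside $(z_{j-1},x_n)$. This holds because $T(z_{j-1})\neq x_n$ and $T(x_n)\neq x_n$, so the strict inequality $T>\mathrm{id}$ on $(z_{j-1},z_j)$ carries the whole argument. The case $j=1$ fits the same framework, with $z_0=0$ and the interval $(0,1/2)$.
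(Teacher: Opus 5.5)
Your proof is correct and follows the paper's argument. You pick a preimage of $x_n$ inside $(z_{j-1},x_n)$, get strict decrease from $T(y)>y$ on $(z_{j-1},z_j)$, and identify the limit as a fixed point, which can only be $z_{j-1}$. The paper takes the minimal preimage rather than the maximal one, but as you note the choice does not matter for the stated conclusion, and you also supply the intermediate value argument for existence that the paper leaves implicit.
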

\begin{proof}
Let $x_1=x$. We inductively define the sequence $(x_n)_n$ such that for each $n\in \mathbb{N}$, the element $x_{n+1}$ satisfies $T(x_{n+1})=x_n$ and $T(x)<x_n$ for all $x\in (z_{j-1}, x_{n+1})$. This is equivalent to saying that $x_{n+1}$ is the minimal element of $T^{-1}(x_{n})$ in $(z_{j-1}, x_{n})$. By Theorem \ref{fixedPoints} we know that $T(y)>y$ for all $y \in (z_{j-1},z_j)$, so the sequence $(x_n)_n$ is strictly decreasing. Furthermore, we have $\lim_{n\to \infty}x_n=z_{j-1}$ since $T(x_{n+1})=x_n$ for every $n$ and there are no fixed points in the interval $(z_{j-1},z_{j})$.
\end{proof}

\begin{thm}\label{Lemmam_0}
 Let $\varepsilon>0$ and we consider $\varepsilon'>0$ and $m\in \mathbb{N}$ as in \eqref{def_m}. If $0\leq \alpha < \omega \leq m$, then there is $\widetilde{\eta}=\widetilde{\eta}(\varepsilon', z_\alpha,z_\omega)>0$ such that if $\eta<\widetilde{\eta}$ and $x\in [z_\alpha-\eta,z_\alpha+\eta ]$ makes a $(z_\alpha,z_\omega,\eta)$-jump of order $N\in \mathbb{N}$, then there exists $\widetilde{x}\in [z_\alpha, T(x)]$ with $T^N(\widetilde{x})=z_\omega$ satisfying that
$$
\left |T^n(x)-T^n(\widetilde{x})\right |<\varepsilon' 
$$
for all $n=0,\ldots, N, N+1$.
\end{thm}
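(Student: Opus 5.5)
The plan is to use continuity and an intermediate value argument along the segment from $z_\alpha$ to $T(x)$, driven by the strong repulsion of the fixed points. There is one wrinkle: the statement says $\widetilde{x}\in[z_\alpha,T(x)]$ but compares $T^n(x)$ with $T^n(\widetilde{x})$ for $n=0$. So we really need $\widetilde{x}$ within $\varepsilon'$ of $x$. This holds because $|x-z_\alpha|\le\eta$, and $T(x)\le z_\alpha+$ something small by \ref{notFar} and \ref{away} of Lemma \ref{epsilonPrime}. The choice of $\widetilde{\eta}$ must make $|T(x)-z_\alpha|$ small, which uniform continuity of $T$ at $z_\alpha$ provides.

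First I would reduce to a finite family of orbit segments with $\eta$-independent control. By Proposition \ref{counting} and the mechanism of Proposition \ref{behavior}, an orbit from near $z_\alpha$ to near $z_\omega$ that avoids lingering (two consecutive iterates) near the intermediate fixed points spends three kinds of time. It spends about $-\log\eta/(m\log 100)$ steps escaping each $\eta$-neighbourhood. It spends at most $(2/3)/\varepsilon'+1$ steps crossing each gap $[z_{k-1}+\varepsilon',z_k-\varepsilon']$. And it spends a bounded number of steps passing through $\varepsilon'$-neighbourhoods of the intermediate $z_k$ without landing. The key observation is that the portion of the orbit outside the $\varepsilon'$-neighbourhoods of fixed points has length bounded independently of $\eta$.

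Second, I would construct $\widetilde{x}$ by connectedness. Consider the interval $J=[z_\alpha,T(x)]$ if $T(x)\ge z_\alpha$, or work with the symmetric point $2z_\alpha-x$ using the local symmetry. Then $T^{N-1}(J)$ is an interval containing $T^{N-1}(z_\alpha)=z_\alpha$ and $T^{N-1}(T(x))=T^N(x)\in[z_\omega-\eta,z_\omega+\eta]$. The orbit of $x$ passes $z_{\omega-1}<\cdots$ and eventually crosses into a neighbourhood of $z_\omega$, so by the intermediate value theorem there is a first parameter $t\in J$ with $T^{N-1}(t)=z_\omega$. More carefully, I would choose $\widetilde{x}'$ as the point closest to $T(x)$ in $J$ such that $T^{N-1}(\widetilde{x}')=z_\omega$, and then set $\widetilde{x}$ to be a preimage-adjusted point satisfying $T^N(\widetilde{x})=z_\omega$. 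Indexing must be matched: $T^N(\widetilde{x})=z_\omega$ means $\widetilde{x}$ plays the role of $x$, not $T(x)$. I would therefore take $\widetilde{x}\in J$ with $T^N(\widetilde{x})=z_\omega$, using that $T^N(z_\alpha)=z_\alpha<z_\omega-\eta$ while $T^N$ at the appropriate endpoint lies at or above $z_\omega$. If $T^N(x)<z_\omega$, I would use $T^{N+1}(x)$ together with Lemma \ref{InfinitelyPreimages} and the fact that $z_\omega$ is a strict local minimum to adjust.

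Third, and hardest, is the closeness estimate $|T^n(x)-T^n(\widetilde{x})|<\varepsilon'$ for all $n\le N+1$. Here I would exploit monotonicity of orbits: every $T^n$-orbit below $2/3$ is nondecreasing (Theorem \ref{fixedPoints}). Both orbits are trapped between consecutive fixed points at the same times, because neither lingers near an intermediate fixed point by Definition \ref{jump}(ii), and $\widetilde{x}$ is chosen as the extremal solution. Inside $\varepsilon'$-neighbourhoods the two points are automatically within $2\varepsilon'$, so I will need to shrink to $\varepsilon'/2$-neighbourhoods. In the bounded-length gap portions, closeness follows from uniform continuity of finitely many iterates $T^k$, $k\le (2/3)m/\varepsilon'+3m$, on $[0,1]$, provided the two orbits enter the gap portion simultaneously with small discrepancy. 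Ensuring this synchronization is the main obstacle. I would try to handle it by choosing $\widetilde{\eta}$ small relative to the modulus of continuity of these finitely many iterates, and by arguing that $\widetilde{x}$, being the extremal zero in $J$, cannot lag behind or overtake $x$'s orbit by a full passage near a fixed point.
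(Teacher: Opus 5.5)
Your outline has a genuine gap exactly where you locate the ``main obstacle'': the estimate $|T^n(x)-T^n(\widetilde{x})|<\varepsilon'$ is never established, and the two tools you propose for it cannot deliver it.

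First, your claim that the orbit crosses the $\varepsilon'$-neighbourhood of an intermediate $z_k$ in a number of steps bounded independently of $\eta$ is false. Definition \ref{jump} only forbids two \emph{consecutive} iterates in $[z_k-\eta,z_k+\eta]$, so the orbit may land at, say, $z_k+2\eta$. Each fixed iterate $T^L$ is continuous at the fixed point $z_k$, so the number of steps needed to leave $(z_k-\varepsilon',z_k+\varepsilon')$ from there tends to infinity as $\eta\to0$. The same happens near $z_\alpha$ itself. Two orbits passing at slightly different distances from $z_k$ therefore leave at different, unboundedly large times. Uniform continuity of a bounded family of iterates says nothing about this.

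Second, choosing $\widetilde{x}$ as an extremal solution of $T^N(\widetilde{x})=z_\omega$ in $[z_\alpha,T(x)]$ gives no control of timing. $T$ is far from monotone there, and nothing excludes solutions whose orbits reach $z_\omega$ early. The solution nearest $T(x)$, if it is close to $T(x)$, tracks the orbit of $T(x)$, i.e.\ it runs one step ahead of $x$. By Lemma \ref{epsilonPrime}(ii) with $\eta=\varepsilon'$, consecutive iterates inside a gap $[z_{k-1}+\varepsilon',z_k-\varepsilon']$ differ by at least $\varepsilon'$. So a one-step lag already costs at least $\varepsilon'$, up to the tracking error, which is the whole allowed discrepancy.

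The paper overcomes this with two ideas you are missing: exact backward matching of residence times near the repelling fixed points, and induction on $\omega-\alpha$ that treats a close passage by an intermediate fixed point as a new jump. Concretely, it argues by contradiction, taking a counterexample with $\omega-\alpha$ minimal, failing scales $\eta_k\to0$ and points $x_k$. It records the \emph{last} iterate $x_k^*$ of $x_k$ in the compact window $[z_\alpha+r_0,z_\alpha+r]$ and extracts a limit $x^*$.

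The shadowing point is then built \emph{backwards}. Lemma \ref{InfinitelyPreimages} lets one pull a point of the window back exactly $n(r,x_k)$ steps while staying in $(z_\alpha,z_\alpha+r]$. Thus the time spent near $z_\alpha$ is matched exactly rather than estimated. The argument then splits:
\begin{enumerate}
\item If the orbits after $x_k^*$ stay uniformly away from the intermediate fixed points, the remaining number of steps is bounded, and continuity at $x^*$ finishes.
\item Otherwise they accumulate at a minimal $z_{j_0}$, and the limit orbit hits $z_{j_0}$ exactly. Lemma \ref{fix} (non-constancy of iterates plus the strict local minimum at $z_{j_0}$) gives an interval near $x^*$ mapped onto $[z_{j_0},z_{j_0}+\eta^*]$. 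The minimality hypothesis applied to the pair $(z_{j_0},z_\omega)$ supplies the rest of the orbit, which is pulled back through that interval and then to $z_\alpha$.
\end{enumerate}
These two steps are what replace your synchronization heuristic.
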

\begin{proof}
For the sake of contradiction, we suppose that Theorem  \ref{Lemmam_0} does not hold. Therefore, we may take a counterexample such that $\omega-\alpha$ is minimal. Let $r<\varepsilon'/4$. We choose $0<r_0<r$ such that $T(z_\alpha+r_0)=z_\alpha+r$ and $T(x)<z_\alpha+r$ for all $x\in (z_\alpha, z_\alpha+r_0)$. In other words, $z_\alpha+r_0$ is the least preimage of $z_\alpha+r$ which is larger  than $z_\alpha$.

It is easy to see that if $\eta<r_0$ and $x$ makes a $(z_\alpha,z_\omega,\eta)$-\emph{jump} then there is $n\in \mathbb{N}$ such that $T^n(x)\in [z_\alpha+r_0, z_\alpha+r ]$. We select $n(r,x)$ such that 
$$
T^{\,n(r,x)}(x)\in [z_\alpha+r_0, z_\alpha+r ]
$$
and $T^l(x)>z_\alpha+r$ for all $l> n(r,x)$.

Since Theorem  \ref{Lemmam_0} does not hold, there is a positive and strictly decreasing sequence $(\eta_k)_k$ converging to zero satisfying that for every $\eta_k$ there is a point $x_{k}\in [z_\alpha-\eta_k, z_\alpha+\eta_k]$ making a $(z_\alpha,z_\omega,\eta_k)$-\emph{jump} of order $N_k\in \mathbb{N}$ and for every $y\in [z_\alpha, T(x_k)]$ such that $T^{N_k}(y)=z_\omega$ there exists $n_y\in \{0,\ldots, N_k, N_k+1\}$ for which
\begin{equation}\label{epskr}
|T^{n_y}(y)- T^{n_y}(x_{k})|\geq \varepsilon'. 
\end{equation}
We may assume that $x_{k}\in [z_\alpha, z_\alpha+\eta_k]$ for every $k\in \mathbb{N}$. Furthermore, we can choose $n(r,x_{k})$ as above and we denote 
$$
x^*_{k}= T^{\,n(r,x_{k})}(x_{k})\in [z_\alpha+r_0, z_\alpha+r ].
$$
Observe that Theorem \ref{fixedPoints} ensures that the sequence $\left (T^l(x_k)\right )_l$ for ${l=0,\ldots, N_k}$ is monotone increasing, and hence we have 
\begin{equation}\label{*eqz0}
T^l(x_k)\leq x_k^*\leq z_\alpha+r<z_\alpha+\varepsilon'/4
\end{equation}
for all $l=0,\ldots, n(r,x_k)$. By compactness, and by turning to a subsequence if necessary, we may assume that $\left (x^*_{k}\right)_k$ converges to $x^*\in [z_\alpha+r_0, z_\alpha+r]$.

Now, suppose that there is $\delta>0$ such that for all $\alpha<j<\omega$ we have 
\begin{equation}\label{C1}
|T^n(x_{k})-z_j|>\delta
\end{equation}
for every $k=1,2, \ldots$ and for every $n(r,x_{k})\leq n\leq N_k$. Thus, the sequence of integers $\left ( N_k-n(r,x_{k})\right )_k$ is bounded. Again, by turning to a subsequence we may assume that $\left ( N_k-n(r,x_{k})\right )_k$ is constant. Observe that
$$
T^{N_k-n(r,x_{k})}(x^*_{k}) = T^{N_k}(x_{k})\in \left [z_\omega-\eta_k, z_\omega+\eta_k \right].
$$
Given that the sequence $(\eta_k)_k$ converges to zero and $(x^*_{k})_k$ converges to $x^*$, it follows by continuity that
$$
T^{N_k-n(r,x_{k})}(x^*)=z_\omega.
$$ 
Moreover, as $\left (x^*_{k}\right )_k$ converges to $x^*$, there is $k_0\in \mathbb{N}$ such that if $k\geq k_0$ then 
$$
|T^{l}(x^*_{k})-T^l(x^*)|<\varepsilon'
$$
for every $0\leq l\leq N_k-n(r,x_{k})$. There is $m_0\leq n (r,x_{k_0})+1 $ such that $T^{m_0-1}(x_{k_0})<x^*$ and $T^{m_0}(x_{k_0})\geq x^*$. We claim that there is a point $\widetilde{x}_{k_0}\in [z_\alpha, T(x_{k_0})]$ such that $T^{\,n (r,x_{k_0})} (\widetilde{x}_{k_0})=x^*$ and $z_\alpha<T^l\left (\widetilde{x}_{k_0}\right )\leq x^*\leq z_\alpha+r$ for every $l=0, \ldots, n\left (r,x_{k_0}\right )$. 

Indeed, if $m_0=n(r,x_{k_0})+1$ then
$$
x^*\in \left [ T^{\,n (r,x_{k_0})}(x_{k_0}), T^{\,n (r,x_{k_0})+1}(x_{k_0})\right ]\subset T^{\,n (r,x_{k_0})}\left ([x_{k_0}, T(x_{k_0}) ] \right )
$$
and hence there is $\widetilde{x}_{k_0}\in [x_{k_0}, T(x_{k_0}) ]$ such that $T^{\,n (r,x_{k_0})} (\widetilde{x}_{k_0})=x^*$. 

Otherwise, by Lemma \ref{InfinitelyPreimages}, we cand find $z_\alpha<x'_{k_0}<x_{k_0}<T\left (x_{k_0}\right)$ such that $T(x'_{k_0})=x_{k_0}$ and we have 
$$
x^*\in \left [T^{m_0-1}(x_{k_0}), T^{m_0}(x_{k_0}) \right ]\subset T^{m_0}\left ( [x'_{k_0},x_{k_0}]\right ),
$$
which yields that there exists $y_0\in [x'_{k_0}, x_{k_0}]$ such that $T^{m_0}(y_0)=x^*$. By Lemma \ref{InfinitelyPreimages} again, we can find a point $\widetilde{x}_{k_0}\in [z_\alpha, T(x_{k_0})]$ such that $T^{\,n (r,x_{k_0})} (\widetilde{x}_{k_0})=x^*$. Therefore, we have 
$$
T^{N_{k_0}}\left ( \widetilde{x}_{k_0}\right )=z_{\omega}\quad \text{ and }\quad |T^{l}(x_{k_0})-T^l(\widetilde{x}_{k_0})|<\varepsilon'
$$
for every $l=0, \ldots, N_{k_0}$, which contradicts \eqref{epskr}.

It follows from the above argument that property \eqref{C1} does not hold. This implies that we can select $\alpha<j_0<\omega$  such that for a suitable sequence $(m_{k,j_0})_k$ we have that
\begin{equation}\label{x0k}
\lim_{k\to \infty} \left |T^{\,m_{k,j_0}}(x^*_{k})-z_{j_0}\right |=0.
\end{equation} 
We may also assume that $j_0\in (\alpha, \omega)$ is minimal with this property, so there is $\delta_1>0$ such that for every $\alpha<l<j_0$ we have 
$$
|T^n(x^*_{k})-z_l|>\delta_1
$$
for every $k=1,2,\ldots, $ and for every $0\leq n \leq m_{k,j_0}$. Thus, there is $\overline{m}_{k,j_0}\leq m_{k,j_0}+1$ such that $T^{\,\overline{m}_{k,j_0}}(x^*_{k})>z_{j_0}$ and $T^{\,\overline{m}_{k,j_0}-1}(x^*_{k})\leq z_{j_0}$. Now we proceed similarly to what we did above. The sequence $(\overline{m}_{k,j_0})_k$ is bounded, and by turning to a subsequence we may assume that  $(\overline{m}_{k,j_0})_k$ is constant. Then, \eqref{x0k} implies that $T^{\,\overline{m}_{k,j_0}}(x^*)=z_{j_0}$. Hence, there is $k_1\in \mathbb{N}$ such that if $k\geq k_1$ then 
\begin{equation}\label{x0kl}
|T^{n}(x^*_{k})-T^n(x^*)|<\varepsilon'/2
\end{equation}
for every $0\leq n\leq \overline{m}_{k,j_0}$. By applying Lemma \ref{fix} to $x^*$, we obtain $0<r^*<\varepsilon'/2$ and $\eta^*>0$, as well as a closed interval $I\subset [x^*-r^*, x^*+r^*]$ such that 
\begin{equation}\label{L11}
T^{\,\overline{m}_{k,j_0}+1}(I)=[z_{j_0}, z_{j_0}+\eta^*],
\end{equation} 
 $\text{dist}\, (T^n(x^*), T^n(I))<\varepsilon'/2$ and $\lambda\left (T^n(I)\right )<\varepsilon'/2$ for all $n=0, \ldots, \overline{m}_{k,j_0}+1$.  By continuity, there exists $\rho>0$ such that if $|y-z_{j_0}|<\rho$ then $|T(y)-z_{j_0}|<\eta^*$.

Since $\omega-j_0<\omega-\alpha$ and the counterexample was chosen so that $\omega - \alpha$ is minimal, we may apply Theorem \ref{Lemmam_0} for $z_{j_0}$ and $z_\omega$. Therefore, there is $0<\widetilde{\eta}\,'<\min \{\varepsilon'/2, \eta^*,\rho\}$ such that if $\eta<\widetilde{\eta}\,'$ and $y$ makes a $(z_{j_0},z_\omega,\eta)$-\emph{jump} of order $N'\in \mathbb{N}$, then there exists $\widetilde{y}\in [z_{j_0}, T(y)]$ satisfying that
$$
T^{N'}(\widetilde{y})=z_\omega \quad \text{ and } \quad |T^n(y)-T^n(\widetilde{y})|<\varepsilon' \text{ for all }n=0,\ldots, N', N'+1.
$$

Now, we choose $k_2\geq k_1$ such that $\eta_{k_2} < \widetilde{\eta}\,'/2$ and
\begin{equation}\label{new}
T^{\,\overline{m}_{k_2,j_0}}(x^*_{k_2})\in (z_{j_0}, z_{j_0}+\widetilde{\eta}\,'/2).
\end{equation}
Thus, $T^{\,\overline{m}_{k_2,j_0}}(x^*_{k_2})$ makes a $(z_{j_0}, z_\omega,\widetilde{\eta}\,'/2)$-jump of order $N_{k_2}-\overline{m}_{k_2,j_0}-n(r,x_{k_2})$. Since \eqref{new} implies $T^{\,\overline{m}_{k_2,j_0}+1}(x^*_{k_2})<z_{j_0}+\eta^*$, as a result of applying Theorem \ref{Lemmam_0} to $T^{\,\overline{m}_{k_2,j_0}}(x^*_{k_2})$, we find a point $\overline{y}\in [z_{j_0}, z_{j_0}+\eta^*]$ such that 
\begin{equation}\label{eqi}
T^{N_{k_2}-\overline{m}_{k_2,j_0}-n(r,x_{k_2})}(\overline{y})=z_\omega \quad \text{ and }\quad \left |T^n(\overline{y})-T^{n+\overline{m}_{k_2,j_0}}(x^*_{k_2})\right |<\varepsilon'
\end{equation}
 for all $n=0,\ldots, N_k-\overline{m}_{k,j_0}-n(r,x_{k_2})+1$. From \eqref{L11} we can also find a point $\widetilde{y}\in I \subset  [x^*-r^*, x^*+r^*]$ such that $T^{\, \overline{m}_{k_2,j_0}}(\widetilde{y})=\overline{y}$ and by using \eqref{x0kl} we obtain that
\begin{equation}\label{eqii}
 \begin{aligned}
 |T^n(\widetilde{y})-T^n(x^*_{k_2})|&\leq \left |T^n(\widetilde{y})- T^n(x^*)\right |+\left |T^n(x^*_{k_2})- T^n(x^*)\right |\\&<\varepsilon'/2+\varepsilon'/2=\varepsilon'
 \end{aligned}
\end{equation}
 for all $n=0, \ldots, \overline{m}_{k_2,j_0}$. By Lemma \ref{InfinitelyPreimages} it follows that we can choose a point $y$ such that $T^{n(r,x_{k_2})}(y)=\widetilde{y}$ and $$z_\alpha<T^{\, l}(y)\leq\widetilde{y}\leq x^*+r^*\leq  z_\alpha+r+r^*<z_\alpha+\varepsilon'$$ for every $l=0, \ldots, n(r,x_{k_2})$. By \eqref{*eqz0} we have that 
$$
\left |T^{\, l}(y)-T^{\, l}(x_{k_2})\right|<\varepsilon'
$$
 for every $l=0, \ldots, n(r,x_{k_2})$. This, together with \eqref{eqi} and \eqref{eqii} yields
$$
|T^n(y)-T^n(x_{k_2})|<\varepsilon'
$$ 
for every $n=0, \ldots, N_{k_2}$. This contradicts \eqref{epskr}. 
\end{proof}

\section{The Takagi function has the shadowing property}\label{ShadowingPropertySection}

Building on the results obtained previously, we dedicate this section to proving that the Takagi function has the shadowing property. We begin with some technical results.

\begin{lemma}\label{expansive}
Let $I$ be an open interval in $[0,2/3]$. Then, every connected component of $T^{-1}(I)$ has length at most $6\lambda(I)$.
\end{lemma}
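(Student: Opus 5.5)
The plan is to reduce the statement to a lower bound on the oscillation of $T$ over arbitrary intervals. Let $J$ be a connected component of $T^{-1}(I)$. Then $J$ is an interval, say of length $\ell$, and $T(J)\subset I$. Hence
$$
\sup_J T-\inf_J T\le \lambda(I).
$$
So it suffices to prove that for every interval $J\subset[0,1]$ of length $\ell$,
$$
\operatorname{osc}_J T:=\sup_J T-\inf_J T\ \ge\ \ell/6 .
$$
The hypothesis $I\subset[0,2/3]$ plays no real role beyond guaranteeing that $T^{-1}(I)$ is the relevant set.

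The basic tool is the self-affine property \eqref{selfAffineProperty}. Suppose $J$ contains a dyadic interval $[x,x+2^{-(n-1)}]$ with $x\in D_n$. Put $s=G'^{+}_{n-1}(x)$, which is an integer because $G_{n-1}$ is polygonal with integer slopes. By \eqref{selfAffineProperty},
$$
\operatorname{osc}_{[x,x+2^{-(n-1)}]}T = 2^{-(n-1)}\operatorname{osc}_{[0,1]}\bigl(T(y)+sy\bigr).
$$
I then bound the oscillation of $T(y)+sy$ using the three values at $y=0,\,1/2,\,1$, namely $0$, $(1+s)/2$ and $s$. If $s=0$ the oscillation is at least $1/2$. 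If $|s|\ge1$, comparing $y=0$ with $y=1$ gives at least $1$. In all cases the dyadic interval of length $h=2^{-(n-1)}$ carries oscillation at least $h/2$.

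Next I need a dyadic interval inside $J$. Any interval of length $\ell$ contains a dyadic interval of length $h$ with $h>\ell/4$. Combined with the previous step, this gives only $\operatorname{osc}_J T\ge \ell/8$. To reach the constant $6$, I split into two cases. If $J$ contains a dyadic interval of length $h\ge \ell/3$, we are done, since then $\operatorname{osc}_J T\ge h/2\ge \ell/6$. Otherwise the maximal dyadic intervals in $J$ have length $h\in(\ell/4,\ell/3)$. In that case $J$ contains two adjacent dyadic intervals of length $h$ whose union is not a dyadic interval of length $2h$, together with a further piece of length at least $\ell-2h>\ell/3$. On this configuration I would rerun the self-affine computation at scale $2h$, using the slope information at the common endpoint and evaluating $T(y)+sy$ at the points $y=1/4,\,1/2,\,3/4$ of the larger dyadic window that lie in $J$. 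Equivalently, I would show directly that $\operatorname{osc}_{[a,b]}(T(y)+sy)\ge (b-a)/6$ for every integer $s$ and every subinterval $[a,b]\subset[0,1]$ of length at least $1/3$, using the explicit values of $G_2$ and $G_3$ together with $T\ge G_m$.

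The main obstacle is this sharpening of the constant from $8$ to $6$, which needs a finite but careful case analysis over the relative position of $J$ and the dyadic grid and over the sign of $s$. The cases $|s|\ge 2$ are easy, since the linear term then dominates. I expect the delicate cases to be $s\in\{0,-1\}$, and there I would first try using the exact values $T(1/4)=T(3/4)=1/2$ and $T(1/2)=1/2$, with the lower bound $T\ge G_m$ on the remaining points. If this case analysis resisted, I would fall back on the weaker bound $8\lambda(I)$, which suffices for any qualitative expansivity use.
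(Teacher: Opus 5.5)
Your reduction to $\operatorname{osc}_J T\ge \ell/6$ and the integer-slope dichotomy via \eqref{selfAffineProperty} match the structure of the paper's proof. As written, however, your argument proves only $\ell\le 8\lambda(I)$. The passage from $8$ to $6$ is an unexecuted plan. Moreover, the concrete step you propose, evaluating at $y=1/4,1/2,3/4$, cannot help inside a zero-slope window, because $T$ takes the value $1/2$ at all three points.

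The missing ingredient is one value: $\max T=T(2/3)=2/3$ (Kahane; $2/3$ is also the fixed point in Theorem~\ref{fixedPoints}). In the case $s=0$, evaluate $T(y)+sy$ at $y=2/3$ instead of $y=1/2$. This gives $\operatorname{osc}_{[0,1]}(T(y)+sy)\ge 2/3$, so every dyadic window of length $h$ carries oscillation at least $\tfrac23 h$, and at least $h$ when $s\neq 0$. You already observed that $J$ contains such a window with $h>\ell/4$. Hence $\operatorname{osc}_J T\ge \tfrac23 h>\ell/6$ immediately, and no case analysis is needed.

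This is exactly the paper's proof. With $2^{-n}<b-a\le 2^{-(n-1)}$, the paper picks $v\in(a,b)\cap D_{n+1}$ and a neighbour $v\pm 2^{-(n+1)}$ that also lies in $(a,b)$. There are then two cases:
\begin{itemize}
\item If $G'^{+}_{n+1}(v)\neq 0$, the increment of $T$ across the window is at least $2^{-(n+1)}$.
\item If $G'^{+}_{n+1}(v)=0$, then \eqref{selfAffineProperty} gives $T(v+\tfrac23 2^{-(n+1)})=T(v)+\tfrac23 2^{-(n+1)}$.
\end{itemize}
The constant is the ratio $2^{-(n-1)}/(\tfrac23 2^{-(n+1)})=6$.

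Your remark that the constant $8$ would suffice downstream is correct. The estimate \eqref{ControlSizePreimages} only needs $\eta\, K^{N_{\max,\eta}}\to 0$, which holds for any $K<100$ because $N_{\max,\eta}\approx -\log\eta/\log 100$. That observation still does not prove the lemma as stated.
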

\begin{proof}
Let $(a,b)$ be a connected component of $T^{-1}(I)$ and let $n$ be an integer such that $2^{-n}<b-a\leq 2^{-(n-1)}$. Therefore, there exists $v\in (a,b)\cap D_{n+1}$. We denote $a_n=v-2^{-(n+1)}$ and $b_n=v+2^{-(n+1)}$. 

We have either $a_n\in (a,b)$ or $b_n\in (a,b)$ because otherwise $b-a\leq 2^{-n}$. We suppose that $b_n\in (a,b)$, and the other case is similar. The function $G_{n+1}$ is affine on $(v,b_n)$, Moreover, $G_{n+1}(v)=T(v)$ and  $G_{n+1}(b_n)=T(b_n)$ since $b_n\in D_{n+2}$. If $G'^{+}_{n+1}(v)=G'^{-}_{n+1}(b_n)=0$, then by \eqref{selfAffineProperty} we get
$$
T\left (v+\frac{2}{3}\cdot \frac{1}{2^{n+1}} \right ) = T(v)+\frac{1}{2^{n+1}} T(2/3)=T(v)+\frac{2}{3}\cdot \frac{1}{2^{n+1}}  
$$
which implies $\lambda(I)\geq \frac{2}{3}\cdot \frac{1}{2^{n+1}}$. Otherwise we have $|G'^{+}_{n+1}(v)|\geq 1$ and hence
$$
|T(b_n)-T(v)|=|G_{n+1}(b_n)-G_{n+1}(v)|\geq b_n-v=\frac{1}{2^{n+1}}
$$
which implies $\lambda(I)\geq 2^{-(n+1)}$. We conclude 
$$
b-a\leq \frac{1}{2^{n-1}}=6\cdot \frac{2}{3}\cdot \frac{1}{2^{n+1}}\leq 6\lambda(I).
$$
\end{proof}
Thanks to the uniform continuity of the Takagi function we obtain the following result:

\begin{lemma}\label{delta_eta}
Let  $\rho>0$ and $M\in \mathbb{N}$. Then, there is $0<\delta<\rho$ such that if $\{x_0,x_1,x_2,\ldots\}$ is a $\delta$-pseudo orbit, then for any given index $j\geq 0$ we have
	$$
	|T^l(x_j)-x_{j+l}|<\rho
	$$
for  $l=0,1,\ldots, M$.
\end{lemma}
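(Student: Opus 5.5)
The argument will be a standard induction on $l$ that uses uniform continuity of $T$ on $[0,1]$, together with a telescoping estimate. We work backwards from the last step. Put $\rho_M=\rho$. For $l=M,M-1,\ldots,1$, once $\rho_l$ has been fixed, uniform continuity of $T$ gives $\rho_{l-1}>0$ with $\rho_{l-1}\le \rho_l/2$ such that $|y-y'|<\rho_{l-1}$ implies $|T(y)-T(y')|<\rho_l/2$. Then set $\delta=\min\{\rho_0,\rho_1/2,\ldots,\rho_M/2\}$; in fact $\delta=\rho_0$ already works, since the $\rho_l$ are nondecreasing in $l$. This gives $0<\delta<\rho$.

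Next fix a $\delta$-pseudo orbit $(x_n)$ and an index $j\ge 0$. We prove by induction on $l=0,\ldots,M$ that $|T^l(x_j)-x_{j+l}|<\rho_l$. The case $l=0$ is trivial, because the difference is $0$. For the inductive step, assume $|T^{l}(x_j)-x_{j+l}|<\rho_{l}$ with $l<M$. Then
\[
|T^{l+1}(x_j)-x_{j+l+1}|\le |T(T^{l}(x_j))-T(x_{j+l})|+|T(x_{j+l})-x_{j+l+1}|<\frac{\rho_{l+1}}{2}+\delta\le \rho_{l+1}.
\]
The first term is below $\rho_{l+1}/2$ by the choice of $\rho_l$. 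The second term is below $\delta\le\rho_0\le\rho_{l+1}/2$ by the pseudo-orbit condition. Since $\rho_l\le\rho$ for every $l$, this gives the claim for $l=0,\ldots,M$.

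One point needs a small check. The points $x_n$ could a priori lie outside $[0,1]$, while $T$ is only defined on $[0,1]$. We handle this either by reading pseudo orbits as sequences in $[0,1]$, as is implicit for $T$, or by extending $T$ continuously and with bounded values, for instance constantly, outside $[0,1]$, which keeps uniform continuity. There is no real obstacle here: the only substantive ingredient is that $\delta$ must not depend on $j$ or on the particular pseudo orbit, and uniform continuity gives exactly this uniformity.
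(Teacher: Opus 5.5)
Your proof is correct and is essentially the paper's argument. The paper also uses uniform continuity to choose $\rho>\delta_0>\delta_1>\cdots>\delta_M=\delta$ with $\delta_n<\delta_{n-1}/2$ and $|T(x)-T(y)|<\delta_{n-1}/2$ whenever $|x-y|<\delta_n$, and then proves $|T^l(x_j)-x_{j+l}|<\delta_{M-l}$ by the same triangle-inequality induction, so your $\rho_l$ is just its $\delta_{M-l}$. On your domain remark: the paper treats $T$ as extended periodically to $\mathbb{R}$, which is still uniformly continuous, so nothing needs changing.
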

\begin{proof}
Since the Takagi function is uniformly continuous, there is a $\delta_0>0$ with $\delta_0<\rho$ such that $|T(x)-T(y)|<\alpha$ whenever $|x-y|<\delta_0$. Again, by the uniform continuity, there is $\delta_1>0$ with $\delta_1<\delta_0/2$ such that $|T(x)-T(y)|<\delta_0/2$ whenever $|x-y|<\delta_1$. By repeating this process, we obtain a sequence $\delta_0,\delta_1, \ldots, \delta_M$ satisfying that
	$$
	\rho>\delta_0>\frac{\delta_0}{2}>\delta_1>\frac{\delta_1}{2}>\delta_2>\cdots>\frac{\delta_{M-1}}{2}>\delta_{M}
	$$ 
and for $1\leq n \leq M$
	\begin{equation}\label{recursiva}
		|T(x)-T(y)|<\delta_{n-1}/2
	\end{equation}
provided that $|x-y|<\delta_{n}$. We take $\delta=\delta_M$ and let $\{x_0,x_1,x_2,\ldots\}$ be a $\delta$-pseudo orbit.
	
	Now, let $j\geq 0$ be a given index and we will prove by induction 
	\begin{equation}\label{iteraciones}
		|T^l(x_j)-x_{j+l}|<\delta_{M-l}
	\end{equation}
	for every $0\leq l\leq M$. First, we observe $|T(x_j)-x_{j+1}|<\delta<\delta_{M-1}$. Now, suppose that the property holds for some $1\leq l<M$ and we will prove it for $l+1$. Thus, \eqref{recursiva} yields 
	$$
	|T^{l+1}(x_j)-T(x_{j+l})|<\frac{\delta_{M-(l+1)}}{2}
	$$ 
	and hence 
	\begin{align*}
|T^{l+1}(x_j)-x_{j+l+1}|&\leq |T^{l+1}(x_j)-T(x_{j+l})|+|T(x_{j+l})-x_{j+l+1}|\\&<\frac{\delta_{M-(l+1)}}{2}+\delta<\delta_{M-(l+1)}.
	\end{align*}
	This proves \eqref{iteraciones} and the result follows immediately.
\end{proof}

\begin{lemma}\label{aroundFP}
Let $n\in \mathbb{N}$. Let $z_n^*=z_n-2^{-2n}$, where $z_n$ is the fixed point of the Takagi function defined in \eqref{z_n}. Then, $T\left(z_n^*\right)=z_n$, $T(x)>z_n$ for all $x\in \left(z_n^*,z_n\right)$ and 
$$
T^k\left(\left[z_n^*,z_n\right] \right)=[z_n, 2/3]
$$
for every $k\in\mathbb{N}$.
\end{lemma}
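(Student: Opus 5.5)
The plan is to reduce everything to the self-affine identity \eqref{selfAffineProperty} at level $2n$, applied at the dyadic points $z_n^*$ and $z_n$, where the polygonal function $G_{2n}$ has zero slope.

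\emph{Self-affine form near $z_n$.} The binary expansion of $z_n$ has $2n$ digits, so $z_n\in D_{2n+1}$, and hence also $z_n^*=z_n-2^{-2n}\in D_{2n+1}$. As observed at the beginning of Section \ref{BehaviorOrbitSection}, $G_{2n}$ is constant on $[z_n-2^{-2n},z_n+2^{-2n}]$. This gives $G_{2n}(z_n^*)=G_{2n}(z_n)=z_n$ and $G'^{+}_{2n}(z_n^*)=G'^{+}_{2n}(z_n)=0$. Since $z_n^*\in D_{2n+1}$, all $g_j$ with $j\ge 2n+1$ vanish there, so $T(z_n^*)=G_{2n}(z_n^*)=z_n$, which is the first claim. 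Plugging this into \eqref{selfAffineProperty} (with the index chosen so that the scaling factor is $2^{-2n}$, exactly as in \eqref{sapEpsilon'}) should give, for all $y\in[0,1]$,
$$
T\left(z_n^*+\frac{y}{2^{2n}}\right)=z_n+\frac{1}{2^{2n}}T(y),\qquad T\left(z_n+\frac{y}{2^{2n}}\right)=z_n+\frac{1}{2^{2n}}T(y).
$$
The second identity is \eqref{sapEpsilon'} with $j=n$.

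\emph{Consequences of the first identity.} For $x\in(z_n^*,z_n)$ we have $y\in(0,1)$, and $T(y)>0$ there because $T(y)\ge g_1(y)>0$. Hence $T(x)>z_n$. As $y$ runs over $[0,1]$, the values $T(y)$ fill exactly $[0,2/3]$: this uses $T\ge 0$, $\max T=2/3$ (Kahane), $T(0)=0$, $T(2/3)=2/3$ and continuity. Therefore
$$
T([z_n^*,z_n])=\left[z_n,\ z_n+\tfrac23 2^{-2n}\right]=[z_n,2/3],
$$
using $2/3-z_n=\tfrac23 2^{-2n}$ from \eqref{z_n}. This settles the case $k=1$.

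\emph{Invariance and induction.} Write $[z_n,2/3]=\{z_n+y2^{-2n}: y\in[0,2/3]\}$. By the second identity, $T([z_n,2/3])=z_n+2^{-2n}T([0,2/3])$. Again $T([0,2/3])=[0,2/3]$: it is contained in $[0,2/3]$ since $0\le T\le 2/3$, and it contains $[0,2/3]$ by the intermediate value theorem from $T(0)=0$, $T(2/3)=2/3$. So $[z_n,2/3]$ is mapped onto itself, and induction on $k$ yields $T^k([z_n^*,z_n])=[z_n,2/3]$ for every $k\in\mathbb N$.

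The only delicate point is the first step: one must verify carefully that the derivative term in \eqref{selfAffineProperty} vanishes at both $z_n^*$ and $z_n$, and that the scaling index matches $2^{-2n}$. I would check this by the same flatness-of-$G_{2n}$ argument the paper uses for \eqref{sapEpsilon'}.
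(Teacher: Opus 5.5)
Your proof is correct and follows essentially the paper's argument: the self-affine identity \eqref{selfAffineProperty} at $z_n^*$ gives the first two claims and $T([z_n^*,z_n])=[z_n,2/3]$. It applies because $G'^+_{2n}(z_n^*)=0$, which holds since $z_n^*=0.(10)^{n-1}01$ has as many zeros as ones among its first $2n$ binary digits. The only difference is the invariance step: you get $T([z_n,2/3])=[z_n,2/3]$ from the self-affine identity at $z_n$, whereas the paper uses $T(x)\ge x$ on $[0,2/3]$ (Theorem \ref{fixedPoints}), $T\le 2/3$ and the fixed endpoints; both routes work.
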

\begin{proof}
Since $G'^+_{2n}(z_n^*)=0$, it follows from \eqref{selfAffineProperty} that 
\begin{equation}\label{sap}
T\left (z_n^*+\frac{y}{2^{2n}}\right )=T(z_n^*)+\frac{1}{2^{2n}}T(y)
\end{equation}
for every $y\in [0,1]$. Hence, $T(z_n^*)=T(z_n)=z_n$ and $T(x)>z_n$ for all $x\in (z_n^*,z_n)$, see Figure \ref{Fig_zStar} with $z_n^*=z^*$ and $z_n=z$. Furthermore, the  graph of $T$ restricted to $[z^*_n,z_n]$ is a similar scaled down copy of the graph on the entire interval $[0,1]$. By applying \eqref{sap} with $y=2/3$, we obtain
$$
T\left (z_n^*+\frac{2}{3}\cdot \frac{1}{2^{2n}}\right )=\frac{2}{3}
$$
and consequently, $T\left ([z_n^*,z_n] \right )=[z_n,2/3]$. By  Theorem \ref{fixedPoints}, we have that $T(x)\geq x$ for all $x\in [z_n,2/3]$, and since $z_n$ and $2/3$ are both fixed points, we deduce that 
$$
[z_n,2/3]=T\left ([z_n,2/3] \right )=T^{k-1}\left ([z_n,2/3] \right )=T^{k-1}\left (T\left ([z_n^*,z_n] \right )\right )
$$ 
for every $k$. This proves the result.
\end{proof}

Let $\varepsilon>0$ be given. In Section \ref{BehaviorOrbitSection}, equation  \eqref{def_m},  we denoted by $m$ the index such that $z_m \in (2/3- \varepsilon / 2, 2/3)$ and $z_j\leq 2/3- \varepsilon / 2$ for all $j=0,\ldots,m-1$. Now, we also introduce the notation $\mathcal{Z}_\varepsilon = \{z_j: j=0,\ldots, m-1\}$.

For each $z\in \mathcal{Z}_\varepsilon$ we denote by $z^*$ the  preimage of $z$ given by Lemma \ref{aroundFP}, that is, $z^*$ is the largest preimage which is less than $z$ and $T(x)>z$ for all $x\in (z^*,z)$. By Lemma \ref{aroundFP} again, there is $\eta_z>0$ such that 
\begin{equation}\label{eqEta}
[z,z+\eta_z]\subsetneq [z,2/3]=T\left ([z^*,z]\right )= T^2\left ([z^*,z]\right ). 
\end{equation}
and  the interval $(z,z+\eta_z)$ contains no fixed points of the Takagi function (see Figure \ref{Fig_zStar}). 
\begin{figure}[h]
	\begin{center}
	\includegraphics[width=0.6\linewidth]{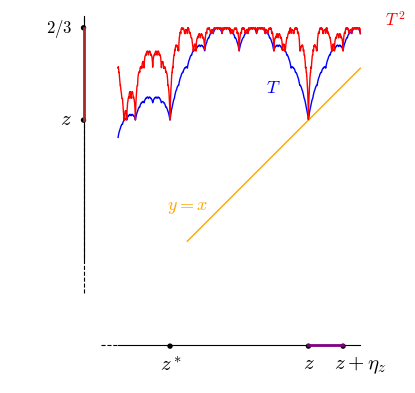}
	\caption{The graph of the $T$ and $T^2$ around $z\in \text{Fix}\,(T)$}
	\label{Fig_zStar}
\end{center}
\end{figure}

For all possible pairs $u,v\in \mathcal{Z}_\varepsilon$ with $u<v$ we apply Theorem \ref{Lemmam_0}  and we obtain the corresponding $\widetilde{\eta}(\varepsilon', u,v)$, and recall that for each $z\in \mathcal{Z}_\varepsilon$, $\eta_z$ is defined as in \eqref{eqEta}. There are finitely many choices for $u$ and $v$ in $\mathcal{Z}_\varepsilon$, hence we may choose $\eta_1>0$ such that 
\begin{equation}\label{eta1}
\eta_1= \min\left\{\, \min \{ \,\widetilde{\eta}(\varepsilon', u,v): u,v\in \mathcal{Z}_\varepsilon \text{ with } u<v\},  \min \{\eta_z: z\in \mathcal{Z}_\varepsilon\},\, \varepsilon'/4\right \}.
\end{equation}
Therefore,  if $\eta<\eta_1$ then Theorem \ref{Lemmam_0} applies to every pair $u,v\in \mathcal{Z}_\varepsilon$ with $u<v$ making a $(u,v,\eta)$-jump of a given order.

For each $0<\eta <\varepsilon'$ we define 
$$
N_{\max,\eta} = \frac{-\log\, \eta }{\log \, 100} +3m+ \frac{2/3}{\varepsilon'}m
$$
and observe that $N_{\eta}$ is the upper bound given by Theorem \ref{behavior}. The expression $\eta \cdot6^{N_{\max,\eta}}$ tends to 0 as $\eta$ goes to 0. Hence, there is $\eta_2>0$ such that  if $0<\eta<  \eta_2$ then we have
\begin{equation}\label{ControlSizePreimages}
\eta \cdot6^{2+N_{\max,\eta}}\leq \varepsilon/4.
\end{equation}
Now, we let
\begin{equation}\label{etaElection}
\eta= \frac{1}{2}\cdot \min \{\eta_1, \eta_2\}
\end{equation}
and
\begin{equation}\label{deltaElection}
 \text{ by invoking Lemma \ref{delta_eta} with $\rho = \eta/4$ and $M= N_{\max,\eta}+2$ we set $\delta>0$.}
\end{equation} 
Thus, if $\{x_0,x_1,x_2,\ldots\}$ is a $\delta$-pseudo orbit, then for any index $j\geq 0$ we have 
\begin{equation}\label{choiceOfDelta}
|T^l(x_j)-x_{j+l}|<\frac{\eta}{4}
\end{equation}
for every $l=0,1\ldots, N_{\max,\eta}+2$. Since $T(x)\leq 2/3$ for every $x\in [0,1]$, we have $T(x_k)\leq 2/3$ for every $k\geq 0$, and hence 
\begin{equation}\label{pseudoOrbit}
0\leq x_k\leq \frac{2}{3}+\frac{\eta}{4}
\end{equation}
for every $k\geq 1$.

\begin{proposition}\label{situationB2}
Let $\varepsilon>0$, and consider $m\in \mathbb{N}$ as defined in \eqref{def_m}, $\varepsilon'>0$ given by Lemma \ref{epsilonPrime}, $\eta>0$ given by \eqref{etaElection} and $\delta>0$ given by \eqref{deltaElection}. Let $u\in \mathcal{Z}_\varepsilon$ and $\{x_0,x_1,x_2,\ldots\}$ be a $\delta$-pseudo orbit. Assume that  there is  $j\in \mathbb{N}$ such that $x_j\in [u-\eta/4, u+\eta]$ and  $T^2(x_{j})> u+\eta$. Let $\overline{n}\in \mathbb{N}$ be such that $x_{j-l}\in [u-\eta/4, u+\eta]$ for every $l=0,\ldots, \overline{n}$. If there is $N\leq N_{\max,\eta}$ such that $T^{N-1}(x_j)< z_m$ and $T^N(x_j)\geq z_m$, then there exists ${\bf x_u}\in [u,u+\eta]$ such that 
$$
|T^{\, l}({\bf x_u})-x_{j-\overline{n}+l}|<\varepsilon
$$
for every $l\geq 0$.
\end{proposition}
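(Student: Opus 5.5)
The plan is to build a single true orbit that rests at $u$ for the first $\overline n$ steps, follows $x_j$ upward into $[z_m,2/3]$, and then stays there forever. Once the pseudo orbit has been carried into $[z_m-\eta/4,\,2/3+\eta/4]$, everything remaining lies within roughly $\varepsilon/2$ of $2/3$, and so does every point of $[z_m,2/3]$. The key facts are that $T$ maps $[z_m,2/3]$ into itself and that this interval has length $<\varepsilon/2$ by \eqref{def_m}.

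\emph{The tail.} Let $N\le N_{\max,\eta}$ be as in the statement. By \eqref{choiceOfDelta} we have $|T^N(x_j)-x_{j+N}|<\eta/4$, so $x_{j+N}\ge z_m-\eta/4$.

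I claim that $x_k\ge z_m-\eta/4-\text{(small)}$ for every $k\ge j+N$. If $x_k\ge z_m-\eta/4$, then $x_k$ lies in $[z_m-\eta/4,z_m]$ or in $[z_m,2/3+\eta/4]$. In the second case $T(x_k)\ge z_m$. In the first case Lemma \ref{epsilonPrime}\ref{away} gives $T(x_k)\ge z_m$, because $\eta/4<\varepsilon'$. In both cases $x_{k+1}> z_m-\delta\ge z_m-\eta/4$, and induction closes.

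Together with \eqref{pseudoOrbit}, every $x_k$ with $k\ge j+N$ lies in $[z_m-\eta/4,\,2/3+\eta/4]$. Since $z_m>2/3-\varepsilon/2$, any point of $[z_m,2/3]$ is within $\varepsilon/2+\eta/4<\varepsilon$ of each such $x_k$. Hence it suffices to find ${\bf x_u}$ with $T^{\overline n+N}({\bf x_u})\in[z_m,2/3]$ whose first $\overline n+N$ iterates are $\varepsilon$-close to $x_{j-\overline n},\dots,x_{j+N-1}$. Forward invariance, $T([z_m,2/3])\subset[z_m,2/3]$, follows from Theorem \ref{fixedPoints}.

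\emph{The head.} During the indices $j-\overline n,\dots,j$, all pseudo-orbit points lie in $[u-\eta/4,u+\eta]$. I would like the true orbit to stay in $[u,u+\eta]$ for the first $\overline n$ steps and then reach a point $y$ near $x_j$. I would use Lemma \ref{InfinitelyPreimages}: take $y\in[u,u+\eta]$ close to $x_j$ (for instance $y=x_j$ if $x_j\ge u$, or its reflection $2u-x_j$ via the local symmetry otherwise). Then pull $y$ back $\overline n$ times along the strictly decreasing preimage chain, which stays in $(u,y]\subset[u,u+\eta]$. Taking ${\bf x_u}$ to be the $\overline n$-th preimage makes the first $\overline n$ iterates within $\eta+\eta/4<\varepsilon$ of the pseudo orbit.

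\emph{The middle.} It remains to pick $y$ so that $T^l(y)$ stays $\varepsilon$-close to $x_{j+l}$ for $l\le N$ and $T^N(y)\ge z_m$. Here I would use Lemma \ref{expansive} in the form of \eqref{ControlSizePreimages}. Consider the interval $J$ between $T^N(x_j)$ and a point of $[z_m,2/3]$ near it; after adjusting, its length is about $\eta$. Pull $J$ back along the orbit of $x_j$ using the connected components of preimages. Each backward step multiplies length by at most $6$, so after $N+2$ steps the pulled-back intervals have length at most $\eta\cdot 6^{N_{\max,\eta}+2}\le\varepsilon/4$.

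This yields $y$ within $\varepsilon/4$ of $x_j$ along the first $N$ iterates, with $T^N(y)\in[z_m,2/3]$. Combined with \eqref{choiceOfDelta}, this gives $|T^l(y)-x_{j+l}|<\varepsilon/4+\eta/4<\varepsilon$.

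The main obstacle is reconciling the head and the middle: the point $y$ found by pulling back must lie in $[u,u+\eta]$ so that the preimage chain of Lemma \ref{InfinitelyPreimages} applies. I would handle this by starting the pullback from $T^2(x_j)$ or $T(x_j)$, using $T^2(x_j)>u+\eta$, and using \eqref{eqEta} ($[u,u+\eta_u]\subset T^2([u^*,u])$) together with the local symmetry to move the resulting point into $[u,u+\eta]$. I would check that the extra two steps are exactly what the exponent $N_{\max,\eta}+2$ in \eqref{ControlSizePreimages} accounts for.
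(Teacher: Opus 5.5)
Your head and tail are the paper's argument. You pull a point of $(u,u+\eta]$ back $\overline n$ times with Lemma~\ref{InfinitelyPreimages}, so the first $\overline n$ iterates stay within $\frac54\eta$ of the pseudo orbit. Afterwards you use part (i) of Lemma~\ref{epsilonPrime} and $T([z_m,2/3])\subset[z_m,2/3]$ to keep both orbits in $[z_m-\eta/4,\,2/3+\eta/4]$.

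The problem is the ``middle.'' The hypothesis $T^{N-1}(x_j)<z_m\le T^N(x_j)$ is about the \emph{true} orbit of $x_j$; it is exactly what Proposition~\ref{behavior} delivers. So nothing needs to be constructed. Take $y=x_j$ if $x_j\ge u$, and $y=2u-x_j$ otherwise; in the second case $T(y)=T(x_j)$ by the local symmetry at $u$, since $\eta/4<2^{-2(m-1)}$. This is the very choice you already made in the head. Then $T^N(y)=T^N(x_j)\ge z_m$, and \eqref{choiceOfDelta} gives $|T^l(y)-x_{j+l}|<\eta/4$ for $1\le l\le N$. This is precisely how the paper proves the proposition.

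As written, however, your proposal does not close. You replace this $y$ by one obtained from a Lemma~\ref{expansive} pull-back. That point is only controlled to within $\varepsilon/4$ of $x_j$, and $\varepsilon/4\gg\eta$, so it need not lie in $[u,u+\eta]$. The ``main obstacle'' this creates is handled only by a vague appeal to $T^2(x_j)$ and \eqref{eqEta}, which is not an argument. The construction is also vacuous: $T^N(x_j)$ already lies in $[z_m,2/3]$, so your interval $J$ can be taken degenerate and $x_j$ itself is an admissible point of the pulled-back set.

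Lemma~\ref{expansive} and \eqref{ControlSizePreimages} are needed only in Proposition~\ref{situationB1}, where the orbit must be steered exactly onto $[v,v+\eta]$ near a lower fixed point. Here the target $[z_m,2/3]$ is forward invariant and already reached by the true orbit of $x_j$. Drop the middle construction and the obstacle disappears.
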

\begin{proof}
Without loss of generality we may assume that $x_j\in [u,u+\eta]$, since otherwise we choose $\widetilde{x}_j\in [u,u+v]$ such that $T(x_j)=T(\widetilde{x}_j)$.  By Lemma \ref{InfinitelyPreimages} we can find a point ${\bf x_u}\in [u,z_j]$ such that 
$T^{n_u}({\bf x_u})=x_j$ and $u<T^{\, l}({\bf x_u})\leq x_j \leq u+\eta$ for every $l=0, \ldots, n_u$. Thus we have
$$
\left |T^{\, l}({\bf x_u})- x_{j-n_u+l}\right |<\frac{5}{4}\eta<\frac{5}{8}\varepsilon'< \frac{5}{16}\varepsilon
$$ 
for every $l=0, \ldots, n_u$, where we have used \eqref{etaElection}, \eqref{eta1} and Lemma \ref{epsilonPrime}. 

The choice of $\delta$ satisfying \eqref{choiceOfDelta}, together with $N\leq N_{\max}$ yields
$$
\left |T^{\, l}({\bf x_u})-x_{j-n_u+l}\right |<\frac{\eta}{4}<\frac{\varepsilon'}{8}<\frac{\varepsilon}{16}
$$
for every $l=n_u, \ldots, n_u+N+1$. 

Since $T^{n_u+N}({\bf x_u})\geq z_m$ we have that $x_{j+N}\geq z_m -\eta/4$ and it follows from \ref{away} in Lemma \ref{epsilonPrime} that $T(x_{j+N})\geq z_m$, which implies  $x_{j+N+1}\geq z_m -\eta/4$. It follows immediately that $x_{j+N+l}\geq z_m -\eta/4$ for every $l\geq 0$. Furthermore, we know that $T^{n_u+N+l}({\bf x_u})\geq z_m$ for every $l\geq 0$. Therefore, we conclude
$$
|T^{l}({\bf x_u})-x_{j-n_u+l}|< \frac{2}{3}+\frac{\eta}{4}-\left (z_m-\frac{\eta}{4} \right )=\frac{2}{3}-z_m+\frac{\eta}{2}<\frac{\varepsilon}{2}+\frac{\eta}{2}<\varepsilon
$$
for every $l\geq n_u+N+1$, where we have used \eqref{pseudoOrbit}.
\end{proof}

\begin{proposition}\label{situationB1}
Let $\varepsilon>0$, and consider $m\in \mathbb{N}$ as defined in \eqref{def_m}, $\varepsilon'>0$ given by Lemma \ref{epsilonPrime}, $\eta>0$ given by \eqref{etaElection} and $\delta>0$ given by \eqref{deltaElection}. Let $u\in \mathcal{Z}_\varepsilon$ and $\{x_0,x_1,x_2,\ldots\}$ be a $\delta$-pseudo orbit. Assume that there is $j\in \mathbb{N}$ satisfying that $x_j \in [u-\eta/4, u+\eta]$ and  $T^2(x_{j})> u+\eta$. Let $\overline{n}\in \mathbb{N}$ be such that $x_{j-l}\in [u-\eta/4, u+\eta]$ for every $l=0,\ldots, \overline{n}$. If there are $N\leq N_{\max}$ and $v\in \mathcal{Z}_\varepsilon$ with $u<v$ such that the point $x_j$ makes a $(u,v,\eta)$-jump of order $N$, then there exists a closed interval $W_u\subset  [u,u+\eta]$ such that
\begin{enumerate}[itemsep=0.3em,label=\small{(\roman*)}]
\item $T^{\,n_u+N}(W_u)=[v,v+\eta]$,
\item  $\lambda\left ( T^l\,(W_u)\right )<5\varepsilon/16$ for every $l=0,\ldots, \overline{n}+N$, and
 \item $\text{dist}\, (T^{\,l}(W_u), x_{j-n_u+l+1})<\frac{41}{64}\varepsilon
$ for every $l=0,\ldots, \overline{n}+N$.
\end{enumerate} 
Furthermore, we have $x_{j+N} \in [v-\eta/4, v+\eta]$.
\end{proposition}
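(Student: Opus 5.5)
We follow the pattern of Proposition \ref{situationB2}: first replace $x_j$ by a true orbit point that shadows the jump, then pull this point back inside the small neighbourhood of $u$ to cover the stretch of the pseudo orbit spent near $u$.

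\emph{Step 1 (normalising $x_j$).} As in Proposition \ref{situationB2}, by the local symmetry of $T$ at $u$ we may assume $x_j\in[u,u+\eta]$, because replacing $x_j$ by $2u-x_j$ does not change $T(x_j)$. By \eqref{etaElection} we have $\eta<\eta_1\le\widetilde\eta(\varepsilon',u,v)$. Since $x_j$ makes a $(u,v,\eta)$-jump of order $N$, Theorem \ref{Lemmam_0} gives $\widetilde x\in[u,T(x_j)]$ with $T^N(\widetilde x)=v$ and $|T^n(\widetilde x)-T^n(x_j)|<\varepsilon'$ for $n=0,\dots,N+1$.

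\emph{Step 2 (opening an interval at $v$).} Because $v$ is a strict local minimum of $T$ and $T^N$ is constant on no interval (Proposition \ref{NAD1}), we can argue as in Lemma \ref{fix} and use \eqref{eqEta} with $\eta<\eta_v$. Concretely, we take a point in $[v^*,v]$ near $v$, or we use that $T^{N}$ maps a small neighbourhood of $\widetilde x$ onto an interval having $v$ as an endpoint. Since $T([v^*,v])\supset[v,v+\eta]$, this yields a closed interval $J$ near $\widetilde x$ with $T^{N}(J)=[v,v+\eta]$; if needed we shift the index by one and use $T^2([v^*,v])$. The size of $J$ and of its images is controlled by Lemma \ref{expansive}. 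Pulling $[v,v+\eta]$ back through the $N$ steps, each connected component of a preimage has length at most $6$ times the length of its image, so $\lambda(T^l(J))\le \eta\, 6^{N+2}\le\varepsilon/4$ by \eqref{ControlSizePreimages}, for every relevant $l$.

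\emph{Step 3 (pulling back to the neighbourhood of $u$).} The point $\widetilde x$ lies in $[u,T(x_j)]\subset[u,u+\varepsilon']$, and $T$ is increasing along orbits there (Theorem \ref{fixedPoints}). Lemma \ref{InfinitelyPreimages} therefore produces a chain of preimages staying in $(u,\widetilde x]$, with $n_u$ steps chosen to match the $\overline n$ indices during which the pseudo orbit remains in $[u-\eta/4,u+\eta]$. By continuity this chain carries $J$ back to a closed interval $W_u\subset[u,u+\eta]$ with $T^{n_u}(W_u)\supset J$; we shrink to a subinterval so that $T^{n_u+N}(W_u)=[v,v+\eta]$, which gives (i). During the first $n_u$ steps every image lies in $[u,u+\eta]$ or in a set of length at most $\varepsilon/4$, and the pseudo orbit lies in $[u-\eta/4,u+\eta]$, so the distances are at most $\tfrac54\eta$ plus small terms. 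During the next $N+1$ steps we combine $\varepsilon'$ from Theorem \ref{Lemmam_0}, $\eta/4$ from \eqref{choiceOfDelta}, and the length bound from Step 2. Using $\eta<\varepsilon'/8$ and $\varepsilon'<\varepsilon/4$ from Lemma \ref{epsilonPrime}, the total is $\varepsilon/4+5\varepsilon/16+\dots$, which should stay below $\tfrac{41}{64}\varepsilon$; this gives (ii) and (iii). The final claim follows from \eqref{choiceOfDelta}: $|T^N(x_j)-x_{j+N}|<\eta/4$ and $T^N(x_j)\in[v-\eta,v+\eta]$. Since the jump ends at $v$ and the orbit is monotone, the sharper bound $x_{j+N}\ge v-\eta/4$ comes from \ref{away} of Lemma \ref{epsilonPrime}, as in Proposition \ref{situationB2}.

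\emph{Main obstacle.} The delicate step is Step 2: producing an interval $W_u$ whose image is exactly $[v,v+\eta]$ while keeping every intermediate image small and close to the pseudo orbit, since $\widetilde x$ alone only hits the single point $v$. We would first try to combine Lemma \ref{fix}, applied to the point $\widetilde x'$ with $T^{N-1}(\widetilde x')=T^N(\widetilde x')=v$ obtained by going one step further along the preimage chain, with the expansion bound of Lemma \ref{expansive}. The remaining work is bookkeeping the constants so that they add up to $\tfrac{41}{64}\varepsilon$.
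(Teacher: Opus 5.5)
Your outline uses the right tools, but the two steps that carry the content of the proposition are not proved, and the mechanisms you suggest for them do not work. \emph{Step 2.} You need an interval whose $N$-th image is the prescribed $[v,v+\eta]$, with $\eta$ fixed in \eqref{etaElection} before the pseudo orbit is given. A Lemma \ref{fix}-type argument at $\widetilde x$ (or at a point $\widetilde x'$ with $T^{N-1}(\widetilde x')=T^N(\widetilde x')=v$) only produces some $[v,v+\eta^*]$. Here $\eta^*$ depends on the point and on the radius $r$, and shrinking $r$, which you need for the images to be small, shrinks $\eta^*$. So nothing guarantees $\eta^*\ge\eta$. Nor can you use a point of $[v^*,v]$ near $v$: if $T^{N-1}(\widetilde x)<v$, then $T^{N-1}(\widetilde x)\le v^*$. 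That is at distance at least $2^{-2(m-1)}\ge\frac34\varepsilon$ from $v$, so small neighbourhoods of $\widetilde x$ never reach $[v^*,v]$.

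The missing idea is to use the whole segment between two consecutive orbit points. One has $T^{N-1}([\widetilde x,T(\widetilde x)])\supset[v^*,v]$, hence $T^{N}([\widetilde x,T(\widetilde x)])=[v,2/3]\supsetneq[v,v+\eta]$ by Lemma \ref{aroundFP}. You then cut out $W=[y^*,T(\widetilde x)]$ using the last preimage of $v^*$ and then the last preimage of $v+\eta$. This gives $T^N(W)=[v,v+\eta]$ exactly and $T^{l+1}(\widetilde x)\in T^l(W)$. Only then does Lemma \ref{expansive} give both smallness and closeness of the late images; a set merely ``near $\widetilde x$'' gives neither.

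\emph{Step 3.} The number $n_u$ of steps spent near $u$ is unbounded, so Lemma \ref{expansive} (a factor $6$ per step) is useless there. ``By continuity this chain carries $J$ back'' gives no control of $T^l(W_u)$ for $l<n_u$, and your claim that these images lie in $[u,u+\eta]$ or have length at most $\varepsilon/4$ is only asserted. The paper avoids pulling back an interval at all. It first pulls back the \emph{point}, taking ${\bf x_u}$ with $T^{n_u}({\bf x_u})=x_j$, and applies Theorem \ref{Lemmam_0} to ${\bf x_u}$ with order $n_u+N$, so that $\widetilde x_u\in[u,T({\bf x_u})]$ already lies near $u$. It then builds $W_u\subset[\widetilde x_u,T(\widetilde x_u)]$ as above and proves $T^l(W_u)\subset[T^l(\widetilde x_u),T^{l+2}(\widetilde x_u)]$ for every $l$. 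Otherwise some $T^{l_0}(W_u)$ would contain two consecutive orbit points. Then $T^{n_u+N-1}(W_u)$ or $T^{n_u+N-2}(W_u)$ would contain $[v^*,v]$, and $T^{n_u+N}(W_u)$ would contain $[v,2/3]$, contradicting $T^{n_u+N}(W_u)=[v,v+\eta]$. Since the orbit of $\widetilde x_u$ increases, this bounds all early images by $\eta+\varepsilon'$, uniformly in $n_u$. Your order of construction could be rescued by a similar monotonicity argument, but you would have to supply it. Components of preimages of an interval lying strictly between $u$ and the next fixed point cannot contain either fixed point, so they stay in that gap, where orbits increase. Hence every earlier iterate is bounded by the last one.
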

\begin{proof}
As in Proposition \ref{situationB2}, we may assume $x_j\in [u,u+\eta]$. It follows from Lemma \ref{InfinitelyPreimages}  that we can find a point ${\bf x_u}\in [u, x_j]$ such that $T^{n_u}({\bf x_u})=x_j$ and $u<T^{\, l}({\bf x_u})\leq x_j \leq u+\eta$ for every $l=0, \ldots, n_u$. Thus, for every $l=0, \ldots, n_u$
$$
|T^{\, l}({\bf x_u})- x_{j-n_u+l}|<\frac{5}{4}\eta<\frac{5}{16}\varepsilon'< \frac{5}{64}\varepsilon
$$ 
 where we have used \eqref{eta1}, \eqref{etaElection} and Lemma \ref{epsilonPrime}. The choice of $\delta$ satisfying \eqref{choiceOfDelta}, together with $N\leq N_{\max}$ yields 
$$
|T^{\, l}({\bf x_u})-x_{j-n_u+l}|<\frac{\eta}{4}<\frac{\varepsilon'}{16}<\frac{\varepsilon}{64}
$$ 
for every $l=n_u+1, \ldots, n_u+N$, where we have used \eqref{eta1}, \eqref{etaElection}  and Lemma \ref{epsilonPrime} again. Therefore, we conclude for $l=0, \ldots, n_u+N$
\begin{equation}\label{x_u}
|T^{\, l}({\bf x_u})-x_{j-n_u+l}|<\frac{5}{64}\varepsilon.
\end{equation}

Recall that $v^*$ stands for the closest preimage of $v$ which is less than $v$, so we have $T(v^*)=v$ and by Lemma \ref{aroundFP} we know that $T(x)> v$ for every $x\in (v^*, v)$.  The point ${\bf x_u}$ makes a $(u,v,\eta)$-jump of order $n_u+N$ and it follows from Theorem \ref{Lemmam_0} that there exists $\widetilde{x}_u\in [u,T({\bf x_u})]$ that satisfies
\begin{equation}\label{x_tilde_pseudo}
T^{n_u+N}(\widetilde{x}_u) = v\quad \text{ and }\quad |T^{l}({\bf x_u})-T^l(\widetilde{x}_u)|<\varepsilon'
\end{equation}
for every $l=0, \ldots, n_u+N+1$. Now, observe that  
$$
T^{n_u+N-1}(\widetilde{x}_u)\leq v^*<v = T^{n_u+N}(\widetilde{x}_u) $$
and consequently, the function $T^{n_u+N-1}$ maps the interval $[\widetilde{x}_u, T(\widetilde{x}_u)]$ onto a closed interval  which contains $[v^*,v]$. Hence, Lemma \ref{aroundFP} yields that 
$$
T^{n_u+N}\left ( [\widetilde{x}_u, T(\widetilde{x}_u)]\right )=[v, 2/3]
$$
Let $v^*_0$ be the maximal element of $T^{-(n_u+N-1)} (v^*)$ in $[\widetilde{x}_u, T(\widetilde{x}_u)]$, and by \eqref{eta1} we obtain 
$$
[v,v+\eta]\subset [v,v+\eta_1]\subsetneq T([v^*,v])=[v, 2/3]= T^{n_u+N}\left ( [v^*_0, T(\widetilde{x}_u)]\right ).
$$
We observe that if $x\in T^{n_u+N-1}\left ( [v^*_0, T(\widetilde{x}_u)]\right )$ then $T(x)\geq v$. We denote by $y^*$ the maximal element of $T^{-(n_u+N)}(v+\eta)$ in $ [v^*_0, T(\widetilde{x}_u)]$ and we let $W_u=[y^*, T(\widetilde{x}_u)]$. Then, we have $T^{n_u+N}(W_u)=[v,v+\eta]$. 

Now, we recall that $T^l(\widetilde{x}_u)< T^{l+1}(\widetilde{x}_u)$ for every $l=0,\ldots, n_u+N-1$. We claim that $T^l(W_u)\subset [T^l(\widetilde{x}_u), T^{l+2}(\widetilde{x}_u) ]$ for every $l=0,\ldots, n_u+N-1$. 

Indeed, since otherwise, there exists an index $0\leq l_0\leq n_u+N-1$ such that either $T^{l_0}(\widetilde{x}_u)$ or  $T^{l_0+2}(\widetilde{x}_u)$ belongs to $T^{l_0}(W_u)$ since $T^{l_0}(W_u)$ is a closed interval and $T^{l_0+1}(\widetilde{x}_u)\in T^{l_0}(W_u)$. 

If  $T^{l_0}(\widetilde{x}_u)\in T^{l_0}(W_u)$ then $T^{n_u+N-1}(\widetilde{x}_u)$ and $T^{n_u+N}(\widetilde{x}_u)$ belong to $T^{n_u+N-1}(W_u)$, and as $T^{n_u+N-1}(\widetilde{x}_u)\leq v^*<v = T^{n_u+N}(\widetilde{x}_u) $, we obtain that $[v^*,v]\subset T^{n_u+N-1}(W_u)$ and by Lemma \ref{aroundFP} we obtain
$$
[v, 2/3]=T([v^*,v])\subset T^{n_u+N}(W_u)=[v,v+\eta],
$$
 which  contradicts \eqref{eqEta} and \eqref{eta1}. 

Otherwise, if $T^{l_0+2}(\widetilde{x}_u)$ belongs to $T^{l_0}(W_u)$, we have that $T^{n_u+N-1}(\widetilde{x}_u)$ and $T^{n_u+N}(\widetilde{x}_u)$ belong to $T^{n_u+N-2}(W_u)$, and we obtain that $[v^*,v]\subset T^{n_u+N-2}(W_u)$. Lemma \ref{aroundFP} again yields that
$$
[v, 2/3]=T^2([v^*,v])\subset T^{n_u+N}(W_u)=[v,v+\eta]
$$
which  contradicts \eqref{eqEta} and \eqref{eta1} again. 

Therefore, we conclude that 
\begin{equation}\label{InBetween}
T^l(W_u)\subset [T^l(\widetilde{x}_u), T^{l+2}(\widetilde{x}_u) ]
\end{equation}
for every $l=0,\ldots, n_u+N-1$.

Since $T^{n_u}({\bf x_u})=x_j\in [u,u+\eta]$ and $\widetilde{x}_u\in [u, T({\bf x_u})]$, it follows from \eqref{x_tilde_pseudo} that $|x_j-T^{n_u}(\widetilde{x}_u)|<\varepsilon'$ and hence, we have $T^{n_u}(\widetilde{x}_u)<u+\eta+\varepsilon'$. By \eqref{InBetween} we obtain
\begin{equation}\label{B1estimation}
\lambda\left (T^l(W_u) \right )<\eta+\varepsilon'<\frac{5}{4}\varepsilon'<\frac{5}{16}\varepsilon
\end{equation}
for every $l=0,\ldots,n_u-2$.

 On the other hand, Lemma \ref{expansive} together with \eqref{ControlSizePreimages} yields that 
$$
\lambda \left (T^{n_u+N-l}(W_u)\right )\leq \eta \cdot 6^{l}\leq  \eta\cdot 6^{2+N_{\max,\eta}} \leq \frac{\varepsilon}{4}
$$
for every $l=0,1,\ldots, N+2$. 

It follows from the previous results that $\lambda\left ( T^l\,(W_u)\right )<5\varepsilon/16$ for every $l=0,\ldots, n_u+N$.


Thus, if $x\in W_u$ then  
\begin{align*}
|T^{l}(x)- x_{j-n_u+l+1}|&\leq |T^{l}(x)- T^{l+1}(\widetilde{x}_u)|+ |T^{l+1}(\widetilde{x}_u)- T^{l+1}({\bf x_u})|\\&+|T^{l+1}({\bf x_u})-x_{j-n_u+l+1}|\\&\leq \frac{5}{16}\varepsilon+\varepsilon'+\frac{5}{64}\varepsilon<\frac{41}{64}\varepsilon
\end{align*}
for every $l=0,\ldots, n_u+N$, where we have used that $T(\widetilde{x}_u)\in W_u$ together with \eqref{x_u}, \eqref{x_tilde_pseudo} and \eqref{B1estimation}.

Finally, we must prove that $x_{j+N} \in [v-\eta, v+\eta]$. 

We assume first that  $v-\eta\leq T^N(x_j)\leq v$. 

It follows from \eqref{choiceOfDelta} that $v-\frac{5}{4}\eta<x_{j+N}< v+\frac{\eta}{4}$. 

Observe that if $v-\frac{5}{4}\eta<x_{j+N}<v-\eta$, then $T(x_{j+N})\geq v+100^m\eta$ by \ref{away} in Lemma \ref{epsilonPrime}, which implies 
$$
x_{j+N+1}\geq  v+100^m\eta -  \frac{\eta}{4}\geq v+99\eta
$$
and hence $T^{N+1}(x_j)\geq v+99\eta-\frac{\eta}{4}\geq v+98\eta$. This contradicts that $T^{N+1}(x_j) \in [v-\eta, v+\eta]$. 

Therefore we conclude that $v-\eta \leq x_{j+N}<v+\frac{\eta}{4}$.

Now,  assume  that  $v< T^N(x_j)\leq v+\eta$. By \eqref{choiceOfDelta} again, we have $v-\frac{\eta}{4}<x_{j+N}<v+\frac{5}{4}\eta$. 

If $v+\eta<x_{j+N}<v+\frac{5}{4}\eta$ then $T(x_{j+N})\geq v+100^m\eta$ and the argument proceeds as before. Hence, we deduce that $v-\frac{\eta}{4}<x_{j+N}<v+\eta$.
\end{proof}

\begin{thm}\label{main}
The Takagi function has the shadowing property.
\end{thm}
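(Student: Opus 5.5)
Fix $\varepsilon>0$. We choose $m$ as in \eqref{def_m}, $\varepsilon'$ by Lemma \ref{epsilonPrime}, $\eta$ by \eqref{etaElection} and $\delta$ by \eqref{deltaElection}, and we let $(x_k)_k$ be a $\delta$-pseudo orbit. The strategy is to cut the pseudo orbit into consecutive stretches. Each stretch either lingers near some fixed point $u\in\mathcal{Z}_\varepsilon$, or travels from one fixed point to the next one at which it lingers. We then build the shadowing point by a nested-interval argument, pulling back along these stretches.

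\textbf{First stretch.} Since $T(y)\ge y$ on $[0,2/3]$ and by \eqref{choiceOfDelta}, the pseudo orbit behaves essentially monotonically up to errors of size $\eta/4$.
\begin{itemize}
\item If $x_0$ is not within $\eta$ of a point of $\mathcal{Z}_\varepsilon$, Lemma \ref{epsilonPrime}\ref{outEpsilonNeighbourhood} and Proposition \ref{counting} show that within $N_{\max,\eta}$ steps the true orbit of $x_0$ either reaches $[z_m,2/3]$ or enters some $[u-\eta,u+\eta]$ with $u\in\mathcal{Z}_\varepsilon$. The pseudo orbit follows it within $\eta/4$ during this time.
\item In the first case the argument of Proposition \ref{situationB2} finishes the proof: once everything lies in $[z_m-\eta/4,2/3+\eta/4]$, an interval of length less than $\varepsilon$, every later step is automatically $\varepsilon$-close.
\end{itemize}

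\textbf{Lingering at $u$.} Suppose the pseudo orbit is near $u\in\mathcal{Z}_\varepsilon$.
\begin{itemize}
\item It may remain in $[u-\eta/4,u+\eta]$ forever. Then the fixed point $u$ itself, or a backward-orbit point supplied by Lemma \ref{InfinitelyPreimages}, shadows it.
\item Otherwise there is a first $j$ with $T^2(x_j)>u+\eta$. Proposition \ref{behavior} then gives $N\le N_{\max,\eta}$ such that either case \ref{case1} holds, which is handled by Proposition \ref{situationB2}, or $x_j$ makes a $(u,v,\eta)$-jump. In the jump case Proposition \ref{situationB1} yields a closed interval $W_u\subset[u,u+\eta]$ with $T^{n_u+N}(W_u)=[v,v+\eta]$, thin images, images $\tfrac{41}{64}\varepsilon$-close to the pseudo orbit, and $x_{j+N}\in[v-\eta/4,v+\eta]$.
\end{itemize}
That last membership is exactly the hypothesis needed to re-apply the same analysis at $v$. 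Since $v>u$ and $\mathcal{Z}_\varepsilon$ is finite, this happens at most $m$ times.

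\textbf{Chaining.} The pseudo orbit passes through a finite chain $u_1<u_2<\dots<u_p$ of fixed points and then either stays near $u_p$ forever or enters $[z_m,2/3]$. We connect the stretches backward.
\begin{itemize}
\item At the last fixed point $u_p$, Proposition \ref{situationB2} or the lingering argument gives a good point in $[u_p,u_p+\eta]$. By Lemma \ref{InfinitelyPreimages} we may take this point in $[u_p,u_p+\eta]=T^{n_{u_{p-1}}+N}(W_{u_{p-1}})$, adjusting the number of lingering steps.
\item It therefore has a preimage in $W_{u_{p-1}}$ whose orbit shadows the stretch near $u_{p-1}$ and the jump. We take the preimage in $W_{u_{p-1}}$, then a further preimage in $W_{u_{p-2}}$, and so on.
\item Finally we pull back through the initial stretch from $x_0$. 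By continuity at $x_0$ and Lemma \ref{fix}-type arguments, the images of small neighbourhoods cover the required interval.
\end{itemize}

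\textbf{Main obstacle: synchronizing the time indices.} The point of $[u,u+\eta]$ reached from $W_{u_{k-1}}$ must be exactly the point whose orbit shadows from time $j_k-\overline n_k$ onward. The lingering time near $u_k$ is arbitrary, so the shadowing point must spend the correct number of steps near $u_k$. I would handle this with Lemma \ref{InfinitelyPreimages}: the decreasing chain of preimages converging to $u_k$ lets us prepend any prescribed number of steps inside $[u_k,u_k+\eta]$, where both orbits are $\tfrac54\eta<\varepsilon$-close. The delicate point is ensuring this chain starts inside $T^{n+N}(W_{u_{k-1}})=[u_k,u_k+\eta]$, which holds because the chain stays in $(u_k,u_k+\eta]$.
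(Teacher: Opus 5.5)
Your architecture matches the paper's. You use the linger/jump decomposition via Proposition \ref{behavior} and finish the last leg with Proposition \ref{situationB2} or by lingering forever. You use the intervals $W_u$ of Proposition \ref{situationB1} with $T^{n_u+N}(W_u)=[v,v+\eta]$, select points backward along the chain, and realize the lingering times with Lemma \ref{InfinitelyPreimages}.

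The gap is your initial stretch, when $x_0$ is not within $\eta$ of $\mathcal{Z}_\varepsilon$ and the true orbit of $x_0$ enters $[u_1-\eta,u_1+\eta]$ at some time $k$. The chain then gives you a specific point $y_1\in W_{u_1}\subset[u_1,u_1+\eta]$ that your shadowing orbit must occupy at that time. You propose to find it in the $T^k$-image of a small neighbourhood of $x_0$ ``by continuity at $x_0$ and Lemma \ref{fix}-type arguments''. Continuity gives the opposite of covering: $T^k([x_0-r,x_0+r])$ is a short interval around $T^k(x_0)$, which is only known to lie within $\eta$ of $u_1$. So it need not contain $y_1$, and after a long stay of the pseudo orbit near $u_1$, $W_{u_1}$ sits extremely close to $u_1$. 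Lemma \ref{fix} does produce neighbourhoods whose images cover some $[z,z+\eta]$, but only around a point whose orbit lands \emph{exactly} on the fixed point, $T^{N-1}(x)=T^N(x)=z$. Nothing in your argument produces such a point near $x_0$. Producing one, with the whole orbit staying close, is precisely what Theorem \ref{Lemmam_0} does, and it is proved only for orbits starting next to a fixed point. There is also a bookkeeping issue: after your first stretch you only know $x_k\in(u_1-\tfrac54\eta,u_1+\tfrac54\eta)$, not the window $[u_1-\eta/4,u_1+\eta]$ assumed in Propositions \ref{situationB2} and \ref{situationB1}.

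The paper avoids the initial stretch entirely with a reduction you can adopt directly:
\begin{enumerate}
\item If $x_0\in(z_{j-1},z_j)$, Lemma \ref{InfinitelyPreimages} gives a true backward orbit $x_0=w_1>w_2>\dots>w_s$ with $w_s\in(z_{j-1},z_{j-1}+\eta]$.
\item Prepending $w_s,\dots,w_2$ to $(x_k)_k$ gives again a $\delta$-pseudo orbit, since the new steps have zero error. If $x^{**}$ shadows the extended sequence, then $T^{s-1}(x^{**})$ shadows the original one.
\item So one may assume $x_0\in[u_0,u_0+\eta]$ with $u_0\in\text{Fix}\,(T)$. The first leg is then itself a $(u_0,u_1,\eta)$-jump, or falls under case \ref{case1} of Proposition \ref{behavior}. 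It is handled by Proposition \ref{situationB1} (hence by Theorem \ref{Lemmam_0}) or by Proposition \ref{situationB2}, exactly like every later leg.
\item At the very end one only needs one more preimage in $[u_0,u_0+\eta]$, again from Lemma \ref{InfinitelyPreimages}.
\end{enumerate}

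You also overlook the case $x_0\in(2/3,1]$. There your monotonicity remark fails, because $T(y)\ge y$ holds only on $[0,2/3]$. The paper handles it with the symmetry $T(x)=T(1-x)$, replacing $x_0$ by $1-x_0$.
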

\begin{proof}
Let $0<\varepsilon<2/3$ be given. Let $m\in \mathbb{N}$ be defined as in \eqref{def_m} and let $\varepsilon'>0$ be given by Lemma \ref{epsilonPrime}. Let $\eta>0$ be chosen as in \eqref{etaElection} and let $\delta>0$ be obtained by invoking Lemma \ref{delta_eta} with $\rho = \eta/4$ and $M= N_{\max,\eta}+2$. 

 Let $\{x_0,x_1,x_2,\ldots\}$ be a $\delta$-pseudo orbit. We must show that there is $x^*\in [0,1]$ such that
\begin{equation}\label{resultWanted}
|T^l(x^*)-x_l|<\varepsilon
\end{equation}
for every $l\geq 0$.

We may assume without loss of generality that $x_0\in [0,2/3]$. Indeed, if $x_0\in (2/3,1]$, then, by the symmetry of the Takagi function about the line $x=1/2$, there exists $\overline{x}_0\in [0,1/2)$ such that $T(\overline{x}_0)=T(x_0)$. Now, we consider the $\delta$-pseudo orbit given by $\left \{\overline{x}_0,x_1,x_2,\ldots\right \}$ and suppose we have a point $\overline{x}^*$ such that $|\overline{x}^*-\overline{x}_0|<\varepsilon$ and $|T^l(\overline{x}^*)-x_l|<\varepsilon$ for every $l\geq 1$. By the symmetry of the Takagi function about the line $x=1/2$, we can find a point $x^*$ such that $T(\overline{x}^*)=T(x^*)$ and $|\overline{x}^*-x^*|<\varepsilon$. Thus, we have $|T^l(x^*)-x_l|<\varepsilon$ for every $l\geq 0$.

By virtue of Lemma \ref{InfinitelyPreimages} and by adding some extra points to the pseudo-orbit if necessary, we may also assume that $x_0\in [u_0, u_0+\eta]$ for some $u_0\in \text{Fix}(T)$. 

It follows from \eqref{pseudoOrbit} and \ref{away} in Lemma \ref{epsilonPrime} that 
$$
x_l\in \left ( u_0-\frac{\eta}{4}, \frac{2}{3}+\frac{\eta}{4}\right ) 
$$
for every $l\geq 0$.

Furthermore, observe that if $u_0\geq z_m$, then 
$$
x_l\in \left ( z_m-\frac{\eta}{4}, \frac{2}{3}+\frac{\eta}{4}\right ) 
$$
for every $l\geq 0$, and 
$$
u_0\in \left [z_m, \frac{2}{3}\right ]\subset \left ( z_m-\frac{\eta}{4}, \frac{2}{3}+\frac{\eta}{4}\right ) 
$$
which implies by \eqref{def_m}, \eqref{eta1}, \eqref{etaElection}, and Lemma \ref{epsilonPrime}
$$
|T^l(u_0)-x_l|=|u_0-x_l|<\frac{2}{3}+\frac{\eta}{4} - \left ( z_m-\frac{\eta}{4}\right )<\frac{\varepsilon}{2}+\frac{\eta}{2}<\frac{\varepsilon}{2}+\frac{\varepsilon'}{8}<\varepsilon
$$
for every $l=0,1,2,\ldots$This gives the result \eqref{resultWanted} where $x^*=u_0$.

Therefore, we may assume without loss of generality that $u_0<z_m$ and $x_0\in [u_0,u_0+\eta]$.

By induction we define a finite sequence of indices $n_0,n_1,\ldots, n_{p}$ and a finite sequence of closed, possibly degenerate, intervals $W_0,\ldots, W_p$ for some $0\leq p \leq m-1$. We take $n_0=0$.

If  $x_i\in [u_0-\eta, u_0+\eta]$ for all $i\geq 0$, then we take $W_0=\{u_0\}$ and we let $p=0$.

Otherwise, there exists an index $j_0\geq 0$ such that $x_{i}\in [u_0-\eta, u_0+\eta]$ for all $i=0,\ldots, j_0$ and $x_{j_0+1}>u_0+\eta$. Without loss of generality, we may assume that $x_{j_0}\in [u_0,u_0+\eta]$. Hence, we have $T^2(x_{j_0})>u_0+\eta$. In view of Theorem  \ref{behavior}, there exists $N_0\leq N_{\max,\eta}$ such that exactly one of the following holds:

 If $T^{N_0-1}(x_{j_0})< z_m$ and $T^{N_0}(x_{j_0})\geq z_m$, that is, case \ref{case1} of Proposition \ref{behavior} applies,  then we invoke Proposition \ref{situationB2} with $j=j_0$ and $\overline{n}=j_0$, and we obtain ${\bf x_{0}}\in [u_0, u_0+\eta]$ such that 
$$
|T^l({\bf x_{0}})-x_{l}|<\varepsilon
$$
for every $l=0,1,2,\ldots$ We let $W_0=\{{\bf x_{0}}\}$ and $p=0$.

Otherwise, \ref{case2} of Proposition \ref{behavior} applies, that is, there exists $u_1\in \mathcal{Z}_\varepsilon$ with $u_1>u_0$ such that $x_{j_1}$ makes a $(u_0,u_1, \eta)$-jump of order $N_0$. We invoke Proposition \ref{situationB1} with $j=j_0$ and $\overline{n}=j_0$, and we obtain a closed interval $W_0\subset [u_0,u_0+\eta]$ such that
$$
T^{j_0+N_0}(W_0)=[u_1,u_1+\eta]
$$
and if $x\in W_0$ then 
$$
|T^l(x)-x_{l+1}|<\frac{41\varepsilon}{64}
$$
for every $l=0,\ldots, j_0+N_0-1$. Furthermore, we have $x_{j_0+N_0} \in [u_1-\eta, u_1+\eta]$.  

Now, suppose that $n_k$ and $W_k$ has been defined for some $k\geq 0$ satisfying that $W_{k}\subset [u_k,u_k+\eta]$ together with
$$
T^{j_k+N_k}\left ( W_k\right )= [u_{k+1},u_{k+1}+\eta]
$$
for some $u_{k+1}<z_m$, and if $x\in W_k$ then 
$$
|T^l(x)-x_{n_k+l+1}|<\frac{41\varepsilon}{64}
$$
for every $l=0,\ldots, j_k+N_k-1$. Moreover, we have $x_{n_k+j_k+N_k}\in [u_{k+1}-\eta, u_{k+1}+\eta]$. 

We let $n_{k+1}=n_k+j_k+N_k$.

If $x_{n_{k+1}+i}\in [u_{k+1}-\eta/4, u_{k+1}+\eta]$ for all $i\geq 0$, then we take $W_{k+1}=\{u_{k+1}\}$ and $p=k+1$.

Otherwise, there exists $j_{k+1}\geq n_{k+1}$ such that $x_{i}\in [u_{k+1}-\eta, u_{k+1}+\eta]$ for all $i=0,\ldots, j_{k+1}$ and $x_{j_{k+1}+1}>u_{k+1}+\eta$. Without loss of generality, we may assume that $x_{j_{k+1}}\in [u_{k+1},u_{k+1}+\eta]$. Hence, we have $T^2(x_{j_{k+1}})>u_{k+1}+\eta$. As before, it follows from Theorem  \ref{behavior}  that there exists $N_{k+1}\leq N_{\max,\eta}$ such that exactly one of the following holds:

 If $T^{N_{k+1}-1}(x_{j_{k+1}})< z_m$ and $T^{N_{k+1}}(x_{j_{k+1}})\geq z_m$, that is, \ref{case1} of Proposition \ref{behavior} applies,  then we invoke Proposition \ref{situationB2} with $j=j_{k+1}$ and $\overline{n}=j_{k+1}-n_{k+1}$, and we obtain ${\bf x_{k+1}}\in [u_{k+1}, u_{k+1}+\eta]$ such that 
$$
|T^l({\bf x_{{k+1}}})-x_{n_{k+1}+l+1}|<\varepsilon
$$
for every $l=0,1,2,\ldots$ We let $W_{k+1}=\{{\bf x_{{k+1}}}\}$ and $p={k+1}$.

Otherwise, \ref{case2} of Proposition \ref{behavior} applies, that is, there is $u_{k+2}<z_m$ with $u_{k+1}<u_{k+2}$ such that $x_{j_{k+1}}$ makes a $(u_{k+1},u_{k+2}, \eta)$-jump of order $N_{k+1}$. We invoke Proposition \ref{situationB1} with $j=j_{k+1}$ and $\overline{n}=j_{k+1}-n_{k+1}$, and we obtain a closed interval $W_{k+1}\subset [u_{k+1},u_{k+1}+\eta]$ such that
$$
T^{j_{k+1}+N_{k+1}}(W_{k+1})=[u_{k+2},u_{k+2}+\eta],
$$
and if $x\in W_{k+1}$ then 
$$
|T^l(x)-x_{n_{k+1}+l+1}|<\frac{41\varepsilon}{64}
$$
for every $l=0,\ldots, j_{k+1}+N_{k+1}$.  Furthermore, we have 
$$
x_{n_{k+1}+ j_{k+1}+N_{k+1}} \in [u_1-\eta/4, u_1+\eta].
$$

This process can be repeated $p$ times, with $p\leq m-1$. As a consequence, we obtain a finite sequence of closed intervals $W_0,\ldots, W_p$ where $W_{p}$ consists of a single point ${\bf x_{p}}\in [u_p, u_p+\eta]$ such that 
\begin{equation}\label{f1}
|T^l({\bf x_{p}})-x_{n_{p}+l+1}|<\varepsilon
\end{equation}
for every $l=0,1,2,\ldots$ Furthermore, for every $k=0,\ldots, p-1$ we have
\begin{equation}\label{key}
T^{\,j_k+N_k}(W_k)=[u_{k+1},u_{k+1}+\eta]
\end{equation}
and if $x\in W_{k}$ then 
$$
|T^l(x)-x_{n_{k}+l+1}|<\frac{41\varepsilon}{64}
$$
for every $l=0,\ldots, j_k+N_k-1$.

It follows from \eqref{key} that there is a sequence of points $y_0,\ldots, y_p$ with $y_p= {\bf x_{p}}$ such that for every $k=0,\ldots, p-1$ we have $y_k\in W_k$, $T^{j_k+N_k}(y_k)=y_{k+1}$ and 
\begin{equation}\label{f2}
\left |T^l(y_k)-x_{n_k+l+1}\right | < \frac{41\varepsilon}{64}
\end{equation}
for every $l=0,\ldots, j_k+N_k-1$. By Lemma \ref{InfinitelyPreimages} we can choose $x^*\in [u_0,u_0+\eta]$ such that $T(x^*)=y_0$. This, along with \eqref{f1} and \eqref{f2}, yields
$$
\left |T^l(x^*)-x_l\right |<\frac{41\varepsilon}{64}<\varepsilon
$$
for every $l\geq 0$. This gives the result.
\end{proof}

\section{The shadowing property and the Takagi family}\label{NoShadowingSection}
For parameters $\gamma>0$ the Takagi family consists of all the maps 
$$
\textbf{T}_\gamma:\mathbb{R}\to \bigg[0,\frac{2}{3}\cdot \gamma\bigg] \text{ defined by }\textbf{T}_\gamma=\gamma \cdot T
$$
 Here, we use the fact that the Takagi function can be extended periodically to the entire real line with period one.

A substantial part of this paper was devoted to establishing that the Takagi function ($\gamma=1$) has the shadowing property. Nevertheless, this phenomenon does not persist throughout the Takagi family. In this section we determine values of the parameter $\gamma$ for which the function $\mathbf{T}_\gamma$ does not have the shadowing property. This is done by studying two distinct geometric properties satisfied by the Takagi function at certain points, thereby revealing new geometric features of the function.

The first such property is the notion of a quasi-tangent point for a function.
\begin{defn}\label{tangent}
We say that $x\in \mathbb{R}$ is a quasi-tangent point for  a function $f:\mathbb{R}\to \mathbb{R}$ if there exist $\xi>0$ and $r>0$ such that 
\begin{enumerate}[label=\small{(\roman*)}]
	\item\label{t1} $f(x)=\xi x$,
	\item\label{t2} $f(y)\leq \xi y$ for every $y\in (x-r,x+r)$, and
	\item\label{t3} $f(y)>\xi x$ for every $y\in (x,x+r)$.
\end{enumerate}
\end{defn}

The set of points at which the Takagi function has a quasi-tangent point has been characterized in \cite{BLL} in terms of the binary expansion of those points. Figure \ref{fig:tangent_points} below illustrates the graph of $\textbf{T}_{\gamma}$, where $\gamma=13/15$, near the point $x=13/24$, which is a quasi-tangent point for the Takagi function with $\xi=15/13$.

\begin{proposition}
If $x\in [0,1]$ is a quasi-tangent point of the Takagi function, then there exists $\gamma = \gamma(x)>0$ such that $\textbf{T}_{\gamma}$ does not have the shadowing property.
\end{proposition}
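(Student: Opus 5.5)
We will take $\gamma=1/\xi$, where $\xi>0$ and $r>0$ are the constants from Definition \ref{tangent} for the quasi-tangent point $x$, and show directly that $\mathbf{T}_\gamma$ fails the shadowing property for a fixed small $\varepsilon$. Multiplying conditions \ref{t1}--\ref{t3} by $\gamma$ shows that, with $f=\mathbf{T}_\gamma$:
\begin{enumerate}[label=\small{(\alph*)}]
\item $f(x)=x$, so $x$ is a fixed point of $f$;
\item $f(y)\leq y$ for every $y\in(x-r,x+r)$;
\item $f(y)>x$ for every $y\in(x,x+r)$.
\end{enumerate}
From (b) and (c), the interval $(x,x+r)$ is forward invariant. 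Indeed, if $y\in(x,x+r)$ then $x<f(y)\leq y<x+r$. Consequently every true orbit starting in $(x,x+r)$ stays strictly to the right of $x$ forever.

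Fix $\varepsilon>0$ with $3\varepsilon<r$. We claim that no $\delta>0$ works for this $\varepsilon$. Given any $\delta>0$, we may assume $\delta<\varepsilon$, and we build a pseudo-orbit that drifts leftward through $x$:
$$
x_0=x+2\varepsilon,\qquad x_{n+1}=f(x_n)-\delta/2 .
$$
This is clearly a $\delta$-pseudo orbit, since $|f(x_n)-x_{n+1}|=\delta/2<\delta$. As long as $x_n\in(x-r,x+r)$, property (b) gives $x_{n+1}\leq x_n-\delta/2$. So the sequence decreases by at least $\delta/2$ per step while it stays in $(x-r,x+r)$. Starting from $x+2\varepsilon<x+r$, it therefore reaches the interval $(-\infty,x-2\varepsilon]$ after finitely many steps, say at index $n_0$. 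Because each step decreases by at most... it suffices that it crosses below $x-2\varepsilon$; it never needs to leave $(x-r,x+r)$ before doing so, since it starts inside and moves monotonically downward. Hence $x_{n_0}\leq x-2\varepsilon$.

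Now suppose $x^*$ satisfies $|f^n(x^*)-x_n|<\varepsilon$ for all $n\geq 0$. Then $x^*\in(x+\varepsilon,x+3\varepsilon)\subset(x,x+r)$. By the forward invariance above, $f^{n_0}(x^*)>x$, and so
$$
|f^{n_0}(x^*)-x_{n_0}|>x-(x-2\varepsilon)=2\varepsilon>\varepsilon,
$$
a contradiction. Hence $\mathbf{T}_\gamma$ does not have the shadowing property.

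The only delicate point is the bookkeeping in the step where the pseudo-orbit crosses below $x-2\varepsilon$. We must make sure the decreasing pseudo-orbit uses (b) only while it lies in $(x-r,x+r)$. This holds because the iterates stay above the stopping level $x-2\varepsilon>x-r$ until the stopping index $n_0$, so (b) applies at every step before $n_0$. Since the Takagi function is extended periodically to all of $\mathbb{R}$, there is no domain issue even if $x$ lies near $0$ or $1$. The essential content is therefore the observation that the quasi-tangency turns $x$ into a one-sided non-hyperbolic fixed point: the graph of $f$ lies under the diagonal near $x$ but stays above the level $x$ to the right.
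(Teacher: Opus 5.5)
Your proof is correct. It rests on the same mechanism as the paper's: take $\gamma=1/\xi$, note that $(x,x+r)$ is forward invariant under $\mathbf{T}_\gamma$, and build a pseudo-orbit that starts to the right of $x$ but ends a definite distance to its left. The only thing used about the would-be shadowing orbit is $\mathbf{T}_\gamma^n(x^*)>x$.

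Where you differ is in how the pseudo-orbit is built.
\begin{itemize}
\item The paper follows the true orbit of some $x_0\in(x+r/2,x+3r/4)$ until it is within $\delta/2$ of $x$. It then makes a single jump of $\delta/2$ across $x$ and follows the true orbit again, asserting that it eventually falls below $x-r/2$. Both assertions tacitly need $\mathbf{T}_\gamma$ to have no fixed points near $x$ other than $x$ itself: none in $(x,x_0]$ for the orbit of $x_0$ to converge to $x$, and none in $[x-r/2,x)$ for the second orbit to escape. That is essentially strict inequality in condition (ii) of the definition of a quasi-tangent point, which the definition as stated does not supply.
\item You subtract $\delta/2$ at every step, so the non-strict inequality $f(y)\le y$ alone gives a uniform drift of at least $\delta/2$ per step. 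Your argument therefore works verbatim under the definition as written.
\end{itemize}
What the paper's construction buys, when strictness is available, is the slightly sharper fact that a pseudo-orbit with a single erroneous step already cannot be shadowed.

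When you write it up, delete the unfinished sentence ``Because each step decreases by at most\ldots''. Also define $n_0$ explicitly as the first index with $x_{n_0}\le x-2\varepsilon$. It exists because otherwise every $x_n$ would lie in $(x-2\varepsilon,x+2\varepsilon]\subset(x-r,x+r)$ while $x_n\le x_0-n\delta/2$.
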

\begin{proof}
Let $x\in [0,1]$ be a quasi-tangent point for the Takagi function. Thus, there exist $\xi>0$ and $r>0$ such that  \ref{t1}, \ref{t2} and \ref{t3} in Definition \ref{tangent} hold. 

We let $\gamma = 1/\xi$ and we have that $\textbf{T}_{\gamma}(x)= x$,  $\textbf{T}_{\gamma}(y)\leq y$ for every $y\in (x-r,x+r)$, and $\textbf{T}_{\gamma}(y)> x$ for every $y\in (x,x+r)$.

Let $\varepsilon'=r/4$. For the sake of contradiction, suppose that there exists $\delta>0$ such that for every $\delta$-pseudo orbit $(x_j)_j$ there is $x^*\in [0,1]$ satisfying
\begin{equation}\label{sake2}
\left |T^j(x^*)-x_j \right|<\varepsilon'
\end{equation}
for every $j=0,1,\ldots$ Without loss of generality we may assume that $\delta<\varepsilon'$. We will construct a $\delta$-pseudo orbit $\left \{x_0,x_1,x_2,\ldots\right \}$ for which \eqref{sake2} does not hold.

Let $x_0\in\left (x+r/2,x+3r/4\right )$. Since $x<\textbf{T}_{\gamma}(y)\leq y$ for every $y\in (x,x+r)$, we have that $\left ( \textbf{T}_{\gamma}^n(x_0)\right )_n$ is a decreasing sequence converging to $x$. Thus, there exists $n_0\in \mathbb{N}$ such that $\textbf{T}_{\gamma}^n(x_0)\in (x, x+\delta/2)$ for every $n\geq n_0$. We take $x_n=\textbf{T}_{\gamma}^n(x_0)$ for every $n=1,\ldots,n_0$.

Now, we take $x_{n_0+1}= \textbf{T}_{\gamma}^{n_0}(x_0)-\delta/2=x_{n_0}-\delta/2$, and we have that $x_{n_0+1}\in (x-r/4,x)$.  Since $\textbf{T}_{\gamma}(y)\leq y$ for every $y\in (x-r,x)$, we have that $\left (T^n (x_{m+1})\right )_n$ is  a decreasing sequence, and there exists $l_0\in \mathbb{N}$ such that 
$$
\textbf{T}_{\gamma}^{l_0} (x_{n_0+1})<x-r/2 \quad \text{ and }\quad \textbf{T}_{\gamma}^{l_0-1} (x_{n_0+1})\geq x-r/2.
$$
We take $$x_n= \textbf{T}_{\gamma}^{n-n_0-1}(x_{n_0+1})$$ for every $n>n_0+1$. Therefore, we have that $x_{l_0+n_0+1}<x-r/2$.

Suppose there exists $x^*\in [0,1]$ such that \eqref{sake2} holds for the $\delta$-pseudo orbit that we have constructed. Since $x_0\in\left (x+r/2,x+3r/4\right )$ and $\varepsilon'=r/4$ we obtain that $x+r/4<x^*<x+r$, and hence $\left ( \textbf{T}_{\gamma}^n(x^*)\right )_n$ is a decreasing sequence converging to $x$. Furthermore, $\textbf{T}_{\gamma}^n(x^*)\geq x$ for every $n$ (see Figure \ref{fig:tangent_points}). However, since $x_{l_0+n_0+1}<x-r/2$ we have 
$$
\textbf{T}_{\gamma}^{l_0+n_0+1}(x^*)-x_{l_0+n_0+1}>\frac{r}{2}>\varepsilon'
$$
which contradicts \eqref{sake2}.
\begin{figure}[h]
    \centering
    \includegraphics[width=0.95\linewidth]{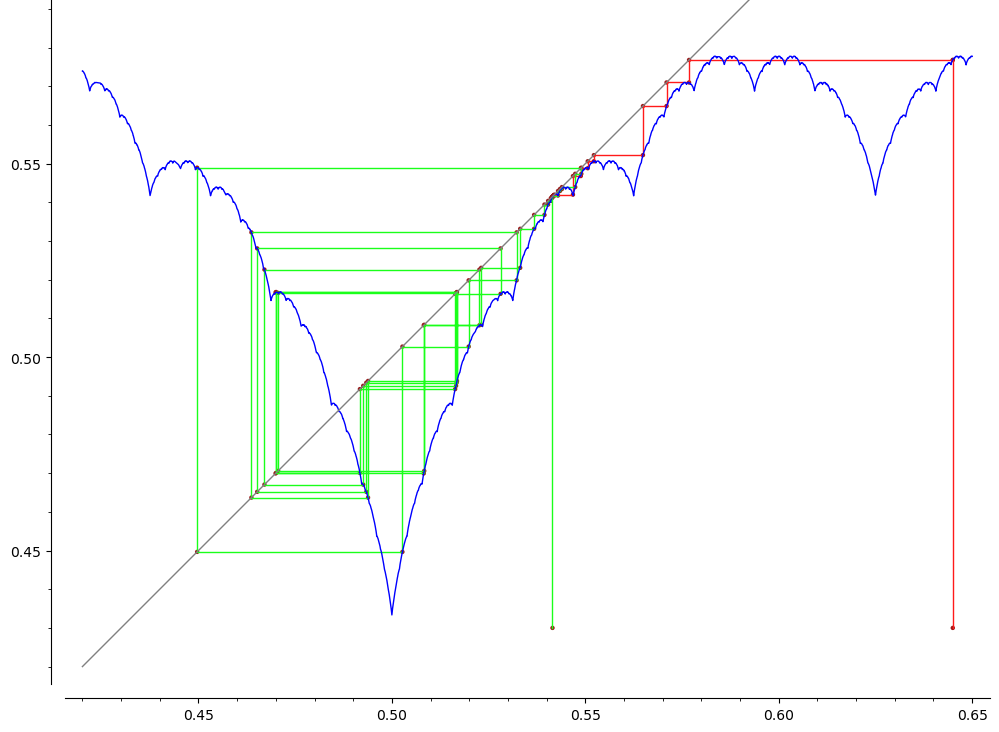}
    \caption{The quasi-tangent point $x=13/24\approx0.541666667$ and $\textbf{T}_{\gamma}$, where $\gamma=\gamma(x)=13/15$.}
    \label{fig:tangent_points}
\end{figure}
\end{proof}

We now turn to the study of the second geometric property. We use the following two results: the first is due to P. C. Allaart (see Lemma 3.3 of \cite{A1}), and the second was obtained recently by the second author and B. Hanson (see Lemma 3.5 of \cite{HLL}).

\begin{lemma}\label{Allaart}
	Let $x\in D_{2k+1}$ with binary expansion, $x=0.\varepsilon_1\varepsilon_2\varepsilon_3\ldots$ be such that $G'^+_{2k}(x)=0$ and $\varepsilon_{2k}(x)=1$. For each non-negative integer $n$ we define
	$$
	a_n = x-\sum_{j=1}^n \frac{1}{4^{k+j}}.
	$$
	Then, we have 
	$$
	T\left ( a_{n+1}+ \frac{y}{4^{k+n+1}}\right )=T(x)+\frac{1}{4^{k+n+1}}T\left ( y\right )
	$$
	for all $y\in [0,1]$.
\end{lemma}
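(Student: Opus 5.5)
The plan is to reduce everything to the self-affine property \eqref{selfAffineProperty}, applied at the dyadic point $a_{n+1}$ itself. First I will determine the binary expansion of $a_{n+1}$. Since $x\in D_{2k+1}$ and $\varepsilon_{2k}=1$, we can write $x=0.\varepsilon_1\cdots\varepsilon_{2k-1}1$. Subtracting $4^{-(k+1)}=2^{-(2k+2)}$ turns the trailing digit $1$ into $0$ and appends $11$. Subtracting each further $4^{-(k+j)}$ turns the final $11$ into $1011$. An easy induction on $n$ then gives
$$
a_{n+1}=0.\varepsilon_1\cdots\varepsilon_{2k-1}\,0\,\underbrace{10\cdots10}_{n\text{ blocks}}\,11 .
$$
In particular $a_{n+1}\in D_{2(k+n+1)+1}$, so \eqref{selfAffineProperty} applies with scale $2^{-2(k+n+1)}=4^{-(k+n+1)}$:
$$
T\left(a_{n+1}+\frac{y}{4^{k+n+1}}\right)=T(a_{n+1})+\frac{1}{4^{k+n+1}}\left[T(y)+y\,G'^{+}_{2(k+n+1)}(a_{n+1})\right].
$$
The lemma therefore reduces to two claims: the slope $G'^{+}_{2(k+n+1)}(a_{n+1})$ vanishes, and $T(a_{n+1})=T(x)$.

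For the slope, I will use that $g_i'^{+}(z)=+1$ when the $i$-th digit of $z$ is $0$ and $-1$ when it is $1$ (for $i$ not exceeding the number of relevant digits). Thus $G'^+_m(z)$ equals the number of zeros minus the number of ones among the first $m$ digits of $z$. The hypothesis $G'^+_{2k}(x)=0$ says the first $2k$ digits of $x$ are balanced. In $a_{n+1}$, digit $2k$ has flipped from $1$ to $0$, which contributes $+2$. The blocks $(10)^n$ contribute $0$, and the final $11$ contributes $-2$. Hence $G'^+_{2(k+n+1)}(a_{n+1})=0$.

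For the value $T(a_{n+1})$, I will apply \eqref{selfAffineProperty} once more, now at the coarser base point $b=0.\varepsilon_1\cdots\varepsilon_{2k-1}=x-2^{-2k}\in D_{2k}$. Its slope is $G'^+_{2k-1}(b)=G'^+_{2k}(x)+1=1$, because we remove from the count the digit $\varepsilon_{2k}=1$. This gives
$$
T\left(b+\frac{y}{2^{2k-1}}\right)=T(b)+\frac{1}{2^{2k-1}}\bigl(T(y)+y\bigr).
$$
With $y=1/2$ we get $T(x)=T(b)+2^{-(2k-1)}$, since $T(1/2)=1/2$. With $y=y_n=0.0(10)^n11$ we get $T(a_{n+1})=T(b)+2^{-(2k-1)}(T(y_n)+y_n)$. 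So it remains to show the numerical identity
$$
T(y_n)+y_n=1\quad\text{for }y_n=0.0(10)^n11 .
$$
This is where I expect the actual computation to lie. My first attempt will be to note that $y_n=\tfrac12-2\sum_{j=1}^{n+1}4^{-j}$, which is exactly the case $k=1$, $x=1/2$ of the same construction. I will then compute $T(y_n)=G_{2n+3}(y_n)$ by pairing consecutive terms $g_i+g_{i+1}$ via Lemma \ref{2en2}: these pairs attain the maximum $2^{-i}$ wherever adjacent digits differ, and the few remaining terms near the leading $0$ and the trailing $11$ are evaluated directly as distances. If the bookkeeping becomes messy, I will fall back on an induction on $n$. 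For that I apply \eqref{selfAffineProperty} at the base point $1/4$ with slope $G'^+_2(1/4)$, and relate $y_{n}$ to $y_{n-1}$ through the shift $y\mapsto 4y-1$ acting on the repeating $10$ blocks.
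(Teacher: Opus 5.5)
Your proof is correct. The paper gives no proof of this lemma; it imports it from Allaart \cite{A1}. What you supply is therefore a self-contained derivation that uses only \eqref{selfAffineProperty} and Lemma \ref{2en2}, both already in the paper.

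Your steps check out:
\begin{itemize}
\item The expansion $a_{n+1}=0.\varepsilon_1\cdots\varepsilon_{2k-1}0(10)^n11$ is right, so $a_{n+1}\in D_{2(k+n+1)+1}$.
\item The first $2k-1$ digits have an excess of $+1$ zeros over ones. Hence $G'^{+}_{2(k+n+1)}(a_{n+1})=1+1+0-2=0$ and $G'^{+}_{2k-1}(b)=1$.
\item The two applications of \eqref{selfAffineProperty} at $b=x-2^{-2k}$ reduce everything to the identity $T(y_n)+y_n=1$.
\end{itemize}

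One remark in your last paragraph is wrong and should be dropped. In fact
\[
y_n=2^{2k-1}(a_{n+1}-b)=\tfrac12-\tfrac12\sum_{j=1}^{n+1}4^{-j}=\tfrac13+\tfrac{1}{24}\,4^{-n},
\]
not $\tfrac12-2\sum_{j=1}^{n+1}4^{-j}$. The latter vanishes for $n=0$ and is negative afterwards. Moreover, $x=1/2$ does not satisfy the hypotheses of the lemma.

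What is true is that $y_n=2a_{n+1}$ for the admissible choice $k=1$, $x=1/4$. Since $T(z/2)=\tfrac12\bigl(T(z)+z\bigr)$, the identity $T(y_n)+y_n=1$ is equivalent to $T(a_{n+1})=T(1/4)$ in that special case. So this observation only reduces the general case to $k=1$; it proves nothing by itself.

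The direct computation you describe does finish the job:
\begin{itemize}
\item $g_1(y_n)=y_n$.
\item Lemma \ref{2en2} gives $g_{2i}(y_n)+g_{2i+1}(y_n)=4^{-i}$ for $1\le i\le n$, because those digit pairs are $10$.
\item $g_{2n+2}(y_n)=g_{2n+3}(y_n)=2^{-(2n+3)}$.
\end{itemize}
Hence $T(y_n)+y_n=2y_n+\sum_{i=1}^{n+1}4^{-i}=1$.

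Your fallback induction also works, via $y_n=\tfrac{1}{4}+\tfrac{1}{4}y_{n-1}$, $T\bigl(\tfrac{1}{4}+\tfrac{y}{4}\bigr)=\tfrac12+\tfrac{1}{4}T(y)$, and $T(3/8)=5/8$.
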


\begin{lemma}\label{line}
For every $x\in \left[0,1/6\right]$ we have
$$
T(x)\leq \frac{1}{2}-2\left ( \frac{1}{6}-x\right ).
$$
\end{lemma}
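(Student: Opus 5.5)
The inequality to prove is equivalent to $T(x)\le 2x+\tfrac16$ on $[0,1/6]$. The plan is to peel off the first two terms of the series defining $T$ and bound the remaining tail with Kahane's maximum value $2/3$.

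\emph{Step 1: the first two terms.} For $x\in[0,1/6]\subset[0,1/4]$ the nearest point of $D_1=\{0,1\}$ to $x$ is $0$, and likewise the nearest point of $D_2=\{0,1/2,1\}$ is $0$. Hence $g_1(x)=g_2(x)=x$, which gives
$$
T(x)=2x+\sum_{n=3}^{\infty}g_n(x).
$$

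\emph{Step 2: rescaling the tail.} Extend the functions $g_n$ periodically with period one, as was done in the proof of Lemma \ref{G_2n}. Then $g_n(x)=2^{-(n-1)}g_1(2^{n-1}x)$, and therefore $g_{n+2}(x)=\tfrac14 g_n(4x)$ for every $n$. Summing over $n\ge 1$ gives
$$
\sum_{n\ge3}g_n(x)=\tfrac14\,T(4x),
$$
where $T$ is understood as its $1$-periodic extension; here $4x\in[0,2/3]$, so no extension is actually needed. Alternatively, this identity is exactly \eqref{selfAffineProperty} with $n=3$, base point $0\in D_3$, and $G'^{+}_{2}(0)=2$.

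\emph{Step 3: the bound.} By Kahane's result, $\max T=2/3$, so $\tfrac14 T(4x)\le \tfrac16$. Substituting into Step 1,
$$
T(x)\le 2x+\tfrac16=\tfrac12-2\left(\tfrac16-x\right).
$$
Equality holds at $x=1/6$, since $T(2/3)=2/3$, which confirms the constant is sharp.

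I do not expect a real obstacle. The only point requiring care is the range check in Step 1: $x\le 1/4$ is what makes $g_2(x)=x$, and $[0,1/6]$ lies well inside this range.
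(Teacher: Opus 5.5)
Your proof is correct. The paper does not prove this lemma itself; it imports it as Lemma 3.5 of \cite{HLL}, so there is no in-paper argument to compare with, and your three steps give a complete, self-contained proof.

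The key identity $T(x)=2x+\tfrac14T(4x)$ on $[0,1/4]$ is exactly \eqref{selfAffineProperty} with base point $0\in D_3$ and $G'^{+}_{2}(0)=2$. Your direct derivation via $g_{n+2}(x)=\tfrac14 g_n(4x)$ is also valid. The reason is that $D_n=2^{-(n-1)}\mathbb{Z}\cap[0,1]$ contains both endpoints $0$ and $1$, so $g_n(x)=\mathrm{dist}(x,2^{-(n-1)}\mathbb{Z})$ on $[0,1]$.

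You do not even need Kahane's theorem. The bound $T\le 2/3$ already follows inside the paper from Lemma \ref{G_2n}(2), since $T=\lim_n G_{2n}\le \lim_n z_n=2/3$.

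Your route also gives slightly more than the statement. The only facts used are $g_1(x)=g_2(x)=x$ and $4x\in[0,1]$, so the inequality $T(x)\le 2x+\tfrac16$ holds on all of $[0,1/4]$. Moreover, $T(x)-\bigl(2x+\tfrac16\bigr)=\tfrac14\bigl(T(4x)-\tfrac23\bigr)$, so equality holds exactly when $4x$ is a maximum point of $T$, for example at $x=1/6$.
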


For every $y\in [0,2/3]$, the level set of the Takagi function at $y$ is defined by 
$$
L(y)=\left \{ x\in [0,1]: T(x)=y\right \}.
$$
In 2012, P. C. Allaart proved that if $y$ is a dyadic rational other than zero, then the set $L(y)$ is countable (see Proposition 4.5 in \cite{A2}). Therefore, for every $n\in \mathbb{N}$ we have that $L(z_n)$ is countable. Recall that $z_n$ is the fixed point of $T$ defined in \eqref{z_n}. This second geometric property is closely related to the structure of the level set of the Takagi function at  $z_n$ on the interval $[z_n, z_n + 2^{-(2n-1)}]$. In particular, the largest element  of $L(z_n)$ contained in this interval plays a crucial role. For simplicity, we present this property in the case of the level set at the point $z_1=1/2$, where the largest element of $L(1/2)$ whithin $[1/2,1]$ is $5/6$.

\begin{lemma}\label{L1/2}
For each non-negative integer $n$ we define
	$$
	y_n= \frac{3}{4}+\frac{1}{4}\sum_{j=1}^{n}\frac{1}{4^k}.
	$$
Then, 
\begin{equation}\label{smallHumps}
T\left ( y_n+\frac{x}{4^{n+2}}\right )=\frac{1}{2}+\frac{1}{4^{n+2}}T(x)
\end{equation}
for all $x\in [0,1]$. Furthermore, we have the following:
\begin{enumerate}[label=\small{(\roman*)}]
\item\label{reverseLine} If $x\in \left[5/6,1\right]$, then
$$
T(x)\leq \frac{1}{2}-2\left ( x-\frac{5}{6}\right ).
$$
\item\label{levelSetRight}  $L\left(1/2\right)\cap \left[1/2,1\right]=\left \{1/2\right\}\cup \left \{y_n:n\geq 0\right \}\cup \left \{5/6\right\}$.
\end{enumerate}
\end{lemma}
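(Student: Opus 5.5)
The plan is to transfer Lemma \ref{Allaart} from the left half of $[0,1]$ to the right half using the symmetry $T(1-x)=T(x)$, and to derive \ref{reverseLine} from Lemma \ref{line} in the same way. I read the summation index in the definition of $y_n$ as $j$, so that $y_n=\frac34+\sum_{j=1}^n 4^{-(1+j)}$. In particular $y_0=3/4$, $y_{n+1}=y_n+4^{-(n+2)}$ and $y_n\uparrow 5/6$.

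\emph{Step 1 (the identity \eqref{smallHumps}).} Apply Lemma \ref{Allaart} with $x=1/4=0.01$ and $k=1$. Then $1/4\in D_3$ and $\varepsilon_2=1$. Also $G'^+_2(1/4)=0$, because to the right of $1/4$ the function $g_1$ has slope $+1$ and $g_2$ has slope $-1$. With $a_n=\frac14-\sum_{j=1}^n4^{-(1+j)}$ and $T(1/4)=1/2$, the lemma gives
$$T\left(a_{n+1}+\tfrac{y}{4^{n+2}}\right)=\tfrac12+\tfrac{1}{4^{n+2}}T(y)\quad\text{for all } y\in[0,1].$$
A direct computation gives $1-a_{n+1}=y_n+4^{-(n+2)}$. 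Hence
$$1-a_{n+1}-\tfrac{y}{4^{n+2}}=y_n+\tfrac{1-y}{4^{n+2}}.$$
Using $T(1-t)=T(t)$, substituting $x=1-y$, and using $T(1-x)=T(x)$ once more yields \eqref{smallHumps}.

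\emph{Step 2 (part \ref{reverseLine}).} If $x\in[5/6,1]$, then $1-x\in[0,1/6]$. Lemma \ref{line} gives
$$T(x)=T(1-x)\le \tfrac12-2\left(\tfrac16-(1-x)\right)=\tfrac12-2\left(x-\tfrac56\right).$$

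\emph{Step 3 (part \ref{levelSetRight}).} Split $[1/2,1]$ into $[1/2,3/4]$, the intervals $[y_n,y_{n+1}]$ for $n\ge0$, the point $5/6$, and $(5/6,1]$.

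\begin{enumerate}[label=\small{(\alph*)}]
\item On $[1/2,3/4]$, reflect to $[1/4,1/2]$. Since $1/4\in D_3$ and $G'^+_2(1/4)=0$, the self-affine property \eqref{selfAffineProperty} with $n=3$ gives $T(1/4+y/4)=\frac12+\frac14T(y)$. This exceeds $1/2$ for $y\in(0,1)$, because $T$ vanishes only at $0$ and $1$. So the only points of $L(1/2)$ here are $1/2$ and $3/4=y_0$.
\item On $[y_n,y_{n+1}]=y_n+4^{-(n+2)}[0,1]$, the identity \eqref{smallHumps} shows that $T=1/2$ exactly when $T(x)=0$, that is, when $x\in\{0,1\}$. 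This gives only the endpoints $y_n$ and $y_{n+1}$.
\item At $5/6=\lim_n y_n$, continuity of $T$ gives $T(5/6)=1/2$.
\item On $(5/6,1]$, part \ref{reverseLine} gives $T(x)<1/2$.
\end{enumerate}

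Putting the pieces together yields exactly $\{1/2\}\cup\{y_n:n\ge0\}\cup\{5/6\}$.

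The main obstacle is Step 1. It requires careful bookkeeping of the indices when converting Lemma \ref{Allaart}, which describes the left-going sequence $a_n$ accumulating at $1/6$, into the right-going sequence $y_n$ accumulating at $5/6$. In particular, one must check that the reflected intervals align exactly with $[y_n,y_{n+1}]$, including the reversal $y\mapsto1-y$ inside each small copy.
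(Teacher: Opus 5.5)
Your proposal is correct and follows essentially the paper's route: Lemma~\ref{Allaart} at $x=1/4$, $k=1$, transferred to $[1/2,1]$ via $T(1-x)=T(x)$, with part (i) from Lemma~\ref{line} applied to $1-x$. The only difference is organizational. The paper first determines $L(1/2)\cap[0,1/2]$ and then reflects it, while you verify part (ii) directly on $[1/2,1]$ using \eqref{smallHumps} and part (i), which spells out the step the paper calls immediate.
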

\begin{proof}
In order to prove the result, we will first make use of Lemma \ref{Allaart} to study the Takagi function on the interval $\left[0, 1/2\right]$, and then take advantage of its symmetry about the line $x = 1/2$.

By virtue of the self-affine property \eqref{selfAffineProperty}, we have $T(1/4) = T(1/2) = 1/2$, and moreover $T(x)>1/2$ provided that $x\in (1/4, 1/2)$.

To apply Lemma \ref{Allaart} to $x=1/4$, and consequently $k=1$, we define
	$$
	a_n = \frac{1}{4}-\frac{1}{4}\sum_{j=1}^n \frac{1}{4^{j}}=\frac{1}{6}+\frac{1}{3}\cdot \frac{1}{4^{n+1}}.
	$$
for every non-negative integer $n$. Observe that $a_0=1/4$. It follows from Lemma \ref{Allaart} that 
\begin{equation}\label{smallHumpsa_n}
T\left ( a_{n+1}+ \frac{y}{4^{n+2}}\right )=\frac{1}{2}+\frac{1}{4^{n+2}}T\left ( y\right )
\end{equation}
for all $y\in [0,1]$. Therefore, for every $n\geq 0$ we have $T(a_n)=1/2$ and $T(x)>1/2$ provided that $x\in (a_{n+1}, a_n)$. Since $(a_n)_n$ converges to $1/6$ we obtain $T(1/6)=1/2$. By Lemma \ref{line} we conclude that 
$$
L\left(1/2\right)\cap \left[0,1/2\right]=\left \{1/6\right\}\cup \left \{a_n:n\geq 0\right \}\cup \left \{1/2\right\}.
$$

Now, observe that for every $n \geq 0$ we have 
$$
y_n=1-a_n= \frac{3}{4}+\frac{1}{4}\sum_{j=1}^{n}\frac{1}{4^j}=\frac{5}{6}-\frac{1}{3}\cdot \frac{1}{4^{n+1}}
$$
and by using the symmetry of the Takagi function about the line $x = 1/2$, together with \eqref{smallHumpsa_n}, we obtain 
\begin{align*}
T\left ( y_n+\frac{x}{4^{n+2}}\right )&=T\left ( a_n-\frac{x}{4^{n+2}}\right )=T\left ( a_{n+1}+\frac{1-x}{4^{n+2}}\right )=\frac{1}{2}+\frac{1}{4^{n+2}}T(1-x)\\&=\frac{1}{2}+\frac{1}{4^{n+2}}T(x)
\end{align*}
for all $x\in [0,1]$. This proves \eqref{smallHumps}.

Finally, if $x \in [5/6,1]$, then $1-x \in [0,1/6]$, and by applying Lemma \ref{line} to $1-x$ we obtain \ref{reverseLine}. Moreover, \ref{levelSetRight} follows immediately from the previous results.
\end{proof}

\begin{proposition}\label{5/3}
If $\gamma = 5/3$, then $\textbf{T}_{5/3}$ does not have the shadowing property.
\end{proposition}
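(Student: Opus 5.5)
The plan is to exploit the fixed point $5/6$ of $\textbf{T}_{5/3}$. Since $T(5/6)=1/2$, we get $\textbf{T}_{5/3}(5/6)=\frac{5}{3}\cdot\frac12=5/6$. By Lemma \ref{L1/2}, the local picture around this fixed point is rigid.

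On the right, Lemma \ref{L1/2}\ref{reverseLine} gives, for $x\in[5/6,1]$,
$$\textbf{T}_{5/3}(x)\leq \frac56-\frac{10}{3}\Big(x-\frac56\Big).$$
So points to the right of $5/6$ are thrown to the left of $5/6$ with distance multiplied by at least $10/3$.

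On the left, $T\geq 1/2$ on $[1/2,5/6]$. This follows from the self-affine description \eqref{smallHumps} on each $[y_n,y_{n+1}]$ and from $T\geq 1/2$ on $[1/2,3/4]$. Hence $\textbf{T}_{5/3}$ maps $[1/2,5/6]$ into $[5/6,10/9]$, and the image of a point of $[y_n,y_{n+1}]$ lies above $5/6$ by at most $\frac53\cdot\frac23\cdot 4^{-(n+2)}$.

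Consequently every genuine orbit that stays in a small neighbourhood $(5/6-r,5/6+r)$ either lands exactly on $5/6$ or strictly alternates sides: left, right, left, right, and so on. I will first record this alternation property, with $r$ small enough, say $r<1/6$, so that the neighbourhood lies in $[1/2,1]$.

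Next I construct, for a given $\delta>0$, a $\delta$-pseudo orbit that breaks the alternation and is then amplified to distance of order $r$ from $5/6$. I start at a point $x_0<5/6$ very close to $5/6$, chosen at the top of a small hump $y_n+\frac23 4^{-(n+2)}$ with $n$ large. Its image lies above $5/6$ by only about $\frac{10}{9}4^{-(n+2)}\ll\delta$. I then set $x_1=\textbf{T}_{5/3}(x_0)-\delta/2<5/6$, so the pseudo orbit sits on the left twice in a row. After this single perturbation I let the pseudo orbit follow a true orbit.

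To make the true orbit escape a fixed neighbourhood, I choose its points alternately at hump tops on the left and at points on the right. The right-hand step expands distance from $5/6$ by at least $10/3$. At hump tops the left-hand step scales distance by a factor comparable to $5/6$ (distance $\approx\frac13 4^{-(n+1)}$ maps to height $\approx\frac{10}{9}4^{-(n+2)}$). Thus over two steps the distance to $5/6$ grows by a factor at least about $25/9>1$. I will in fact take the pseudo orbit to be a genuine orbit before and after the single $\delta$-kick, so that its points reach distance $\geq r/2$ from $5/6$ on both sides at prescribed parities.

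Finally, I suppose shadowing holds with $\varepsilon=r/4$. The shadowing orbit must stay within $r/4+|x_k-5/6|$ of $5/6$. Near the times where $|x_k-5/6|\geq r/2$, it must be on the same side as $x_k$. By the alternation property, its side parity is fixed throughout the window in which it stays in $(5/6-r,5/6+r)$, unless it hits $5/6$, which is excluded once $|x_k-5/6|>\varepsilon$ later. The pseudo orbit, however, has flipped parity via the double-left step. If I prepend a long stretch of far-from-$5/6$ points before the kick with the old parity, and place far points with the new parity after it, this is a contradiction.

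The main obstacle is making this robust. The shadowing orbit need not stay in $(5/6-r,5/6+r)$ during the middle stretch, where the pseudo orbit is within $\delta$ of $5/6$, so the parity must be tracked only while distances are below $\varepsilon$. I also need uniform control on the left side: a lower bound on $\textbf{T}_{5/3}(x)-5/6$ is not available near the $y_n$. My first attempt is to avoid needing it by arguing only with upper bounds: any orbit at distance $<\varepsilon$ on the left moves to the right with distance at most $\frac{10}{3}$ times larger, using $T(x)-1/2\leq$ (hump height)$\,\lesssim \frac23\,(5/6-x)$ from \eqref{smallHumps}. Combined with the exact alternation, this should force the side pattern.
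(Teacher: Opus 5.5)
\textbf{There is a genuine gap: you never prove that some genuine orbit starting $\delta$-close to $5/6$ on the left actually escapes.}

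Your overall mechanism is the right one, and it is the paper's. Genuine orbits near $5/6$ alternate sides: $T\ge 1/2$ on $[1/4,5/6]$, and by the reverse-line bound of Lemma~\ref{L1/2} points of $(5/6,1]$ are sent strictly to the left of $5/6$. A single $\delta$-kick from just right of $5/6$ to just left of it flips the parity. A later far point with the flipped parity then cannot be shadowed.

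The missing step is showing that the escape distance from $5/6$ is bounded below independently of $\delta$. Your justification (choosing the points ``alternately at hump tops'', a two-step growth factor $25/9$) does not apply to an orbit. Once $x_1$ is fixed, the orbit is determined. After a right-hand step the point can land next to some $y_k$, where $\textbf{T}_{5/3}$ sends it back within an arbitrarily small distance of $5/6$, or exactly onto $5/6$, where it stays forever. On the left there is no lower bound of the form $\textbf{T}_{5/3}(x)-5/6\ge c\,(5/6-x)$. The upper bounds your last paragraph proposes to use only limit how fast an orbit leaves; they never force it to leave. In particular, your specific point $x_1=\textbf{T}_{5/3}(x_0)-\delta/2$ need not escape at all.

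The ``prepended long stretch'' of far points is also asserted without construction. It would have to be a genuine orbit arriving at your hump top $x_0$. You need it in your version, because $x_0$ is so close to $5/6$ that the shadowing point at time $0$ could lie on either side.

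The missing idea is a covering relation plus backward selection, which is the core of the paper's proof. Set $I_n=[y_n+\frac12 4^{-(n+2)},y_{n+1}]$.
\begin{itemize}
\item By the self-affinity \eqref{smallHumps}, $\textbf{T}_{5/3}(I_n)=[5/6,\,5/6+\frac{10}{9}4^{-(n+2)}]$.
\item Apply the reverse-line bound at the right endpoint of this image, use $\textbf{T}_{5/3}(5/6)=5/6$, and apply the intermediate value theorem. This gives $\textbf{T}_{5/3}^{2}(I_n)\supset[5/6-\frac{100}{27}4^{-(n+2)},\,5/6]\supset I_{n-1}$.
\item Fix the far target first, say $p_0=19/24\in I_0$. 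Pull back by choosing $p_n\in I_n$ with $\textbf{T}_{5/3}^{2}(p_n)=p_{n-1}$.
\item Use $p_m$ as the post-kick point, with $m$ chosen so that $I_m$ lies in the $\delta/2$-neighbourhood of $5/6$.
\end{itemize}

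Kicking at a far point also removes the need for your prepended stretch and for $r<1/6$. The paper takes $x_0=1/2$, $x_1=p_m$ and $x_n=\textbf{T}_{5/3}^{n-1}(x_1)$. Any shadowing point near $1/2$ has $\textbf{T}_{5/3}(x^*)\ge 5/6$, so its odd iterates stay $\ge 5/6$. Meanwhile $x_{2m+1}=p_0$ lies $1/24$ to the left of $5/6$, which gives the contradiction.

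Incidentally, your worry that the shadowing orbit might leave $(5/6-r,5/6+r)$ while the pseudo orbit is $\delta$-close to $5/6$ is unfounded: at those times it is within $\varepsilon+\delta$ of $5/6$.
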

\begin{proof}
Following the notation used in Lemma \ref{L1/2}, for each non-negative integer $n$ we define
\begin{equation}\label{y_n}
	y_n= \frac{3}{4}+\frac{1}{4}\sum_{j=1}^{n}\frac{1}{4^j}=\frac{5}{6}-\frac{1}{3}\cdot \frac{1}{4^{n+1}}.
\end{equation}
By Lemma \ref{L1/2} we obtain
$$
\textbf{T}_{5/3}\left (\frac56\right )=\frac{5}{3}\cdot T\left (\frac56\right )=\textbf{T}_{5/3}\left (y_n\right )=\textbf{T}_{5/3}\left (\frac12\right )=\frac{5}{6}
$$
Observe that the sequence $(y_n)_n$ is strictly increasing and converges to $5/6$. Furthermore, we have $y_{n+1}=y_n+4^{-(n+2)}$.

For every non-negative integer $n$ we consider the interval $$I_n=\left [y_{n}+\frac{1}{2}\cdot \frac{1}{4^{n+2}}, y_{n+1}\right ]$$ We will prove that 
\begin{equation}\label{ContainingPreviousHump}
I_{n-1}=\left [y_{n-1}+\frac{1}{2}\cdot\frac{1}{4^{n+1}}, y_n\right ]\subset \textbf{T}_{5/3}^{\, 2}\left (I_n \right )
\end{equation}
for every $n\in \mathbb{N}$ (see Figure \ref{Noshadowing5/3}). 

Indeed, it follows from \eqref{smallHumps} in Lemma \ref{L1/2} that
\begin{equation}\label{ImageI_n}
\textbf{T}_{5/3}\left ( I_n\right )=\frac{5}{3}\cdot T\left (I_n\right )=\left [\frac{5}{6}, \frac{5}{6}+\frac{2}{3}\cdot\frac{5}{3} \cdot \frac{1}{4^{n+2}} \right ]=\left [\frac{5}{6}, \frac{5}{6}+\frac{10}{9}\cdot \frac{1}{4^{n+2}} \right ].
\end{equation}
For every $x\in [5/6,1]$ we define the line 
$$
\mathfrak{T}(x)=\frac{5}{3}\cdot \frac{1}{2}-2\cdot \frac{5}{3} \cdot\left ( x-\frac{5}{6}\right)=\frac{5}{6}-\frac{10}{3}\left ( x-\frac{5}{6}\right)
$$
and by \ref{reverseLine} in Lemma \ref{L1/2} we obtain 
\begin{equation}\label{less}
\textbf{T}_{5/3}\left (\frac{5}{6}+\frac{10}{9}\cdot \frac{1}{4^{n+2}} \right )\leq\mathfrak{T}\left (\frac{5}{6}+\frac{10}{9}\cdot \frac{1}{4^{n+2}} \right ) = \frac{5}{6}-\frac{100}{27}\cdot \frac{1}{4^{n+2}}.
\end{equation}
Since  $\textbf{T}_{5/3}\left (5/6\right)=5/6$, the continuity of $\textbf{T}_{5/3}$ together with \eqref{ImageI_n} and \eqref{less} yields that
$$
\left [ \frac{5}{6}-\frac{100}{27}\cdot \frac{1}{4^{n+2}}, \frac{5}{6}\right ]\subset \textbf{T}_{\gamma}^{\, 2}\left (I_n \right ).
$$
By \eqref{y_n} we have
$$
 \frac{5}{6}-\frac{100}{27}\cdot \frac{1}{4^{n+2}}\leq y_{n-1}+\frac{1}{2}\cdot \frac{1}{4^{n+1}}<y_n<\frac{5}{6}
$$
and this gives the desired result.

Let 
$$
\varepsilon'=\frac{5}{6}-y_2=\frac{1}{3}\cdot \frac{1}{4^3}.
$$
 For the sake of contradiction, suppose that there exists $\delta>0$ such that for any $\delta$-pseudo orbit $(x_j)_j$ there is $x^*\in [0,1]$ such that 
\begin{equation}\label{sake}
\left |T^j(x^*)-x_j \right|<\varepsilon'
\end{equation}
for every $j=0,1,\ldots$  

We will construct a $\delta$-pseudo orbit $\left \{x_0,x_1,x_2,\ldots\right \}$ for which there is no $x^*\in [0,1]$ satisfying \eqref{sake}.

Since $\textbf{T}_{5/3}(1/2)=5/6$, we let $x_0=1/2$. Since $(y_n)_n$ is a stricly increasing sequence converging to $5/6$, we choose $m\in\mathbb{N}$ such that 
\begin{equation}\label{p_n}
I_m\subset \left [\frac{5}{6}-\frac{\delta}{2}, \frac{5}{6} \right ]\quad  \text{ and }\quad I_{m-1}\not\subset \left [\frac{5}{6}-\frac{\delta}{2}, \frac{5}{6} \right ].
\end{equation}

Now, we inductively define a finite strictly increasing sequence of points $p_0,\ldots, p_m$ such that $p_n\in \inte\left (I_n\right )$ for every $n=0,\ldots, m$ and $\textbf{T}_{5/3}^2(p_n)=p_{n-1}$ for every $n=1,\ldots, m$. We let
$$
p_0= y_{0}+\frac{2}{3}\cdot \frac{1}{4^{2}}\in \inte\left (I_0\right ).
$$
Suppose that $p_n\in \inte\left(I_n\right)$ has been defined for some $n=0,\ldots, m-1$. Since $\textbf{T}_{5/3}^{2}(p_n)\neq 5/6$, it follows from \eqref{ContainingPreviousHump} that there exists $p_{n+1}\in \inte\left(I_{n+1}\right)$ such that $\textbf{T}_{5/3}^{2}(p_{n+1})=p_n$. 

We take $x_1=p_m$. Since $p_m\in I_m$, by \eqref{p_n} we obtain that
$$
\left |x_1-\textbf{T}_{5/3}(x_0)\right|=\left |x_1-\frac{5}{6}\right|<\frac{\delta}{2}.
$$
For every $n\geq 2$ we take $x_n= \textbf{T}_{5/3}^{\, n-1}(x_1)$. Therefore, we have 
$$
\textbf{T}_{5/3}^{\, 2k}(x_1)=p_{m-k}\in \inte\left(I_{n-k}\right)
$$
for every $k=0,\ldots, m$.

Let $x^*\in (5/6-\varepsilon', 5/6+\varepsilon')$ such that \eqref{sake} holds. Since $$
|x^*-x_0|=\left |x^*-\frac{1}{2}\right |<\varepsilon'<\frac{1}{8}
$$ 
we have that  $x^*\in (3/8, 5/8)$. Moreover, $x_{2n+1}=\textbf{T}_{5/3}^{\, 2n}(x_1)\in I_0$ and hence 
$$
x_{2n+1}<\frac{5}{6}-\varepsilon'
$$
which implies $\textbf{T}_{5/3}^{\, 2n+1}(x^*)<\frac{5}{6}$. However, since $x^*\in (1/4, 3/4)$ it follows that $\textbf{T}_{5/3}^{2j+1}(x^*)\geq 5/6$ for every $j=0,\ldots, n$. This is a contradiction.

\begin{figure}[h]
	\centering
	\includegraphics[width=\linewidth]{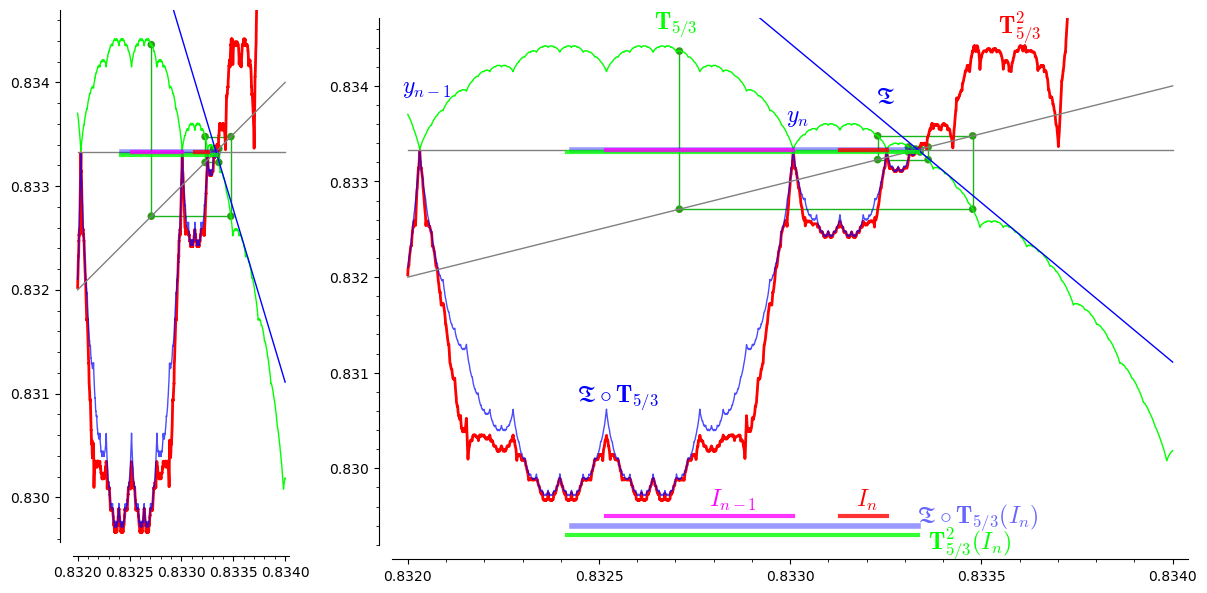}
	\caption{Two distorted images of parts of the graphs of $\textbf{T}_{5/3}$ and $\textbf{T}_{5/3}^{\, 2}$ with the line $\mathfrak{T}$. The interval $I_{n-1}=\left [y_{n-1}+\frac{1}{2}\cdot\frac{1}{4^{n+1}}, y_n\right ]$ satisfies $I_{n-1}\subset \textbf{T}_{5/3}^{\, 2}\left (I_n \right )$.}
	\label{Noshadowing5/3}
\end{figure}
\end{proof}
It is important to observe that the value  $\gamma = 5/3$ appearing in Proposition \ref{5/3} satisfies
$$
\textbf{T}_{\gamma}(5/6)=\gamma\cdot T(5/6)=5/6.
$$
By virtue of the self-affine property \eqref{selfAffineProperty}, and using arguments similar to those in Lemma \ref{L1/2} and Proposition \ref{5/3}, we obtain the following result:

\begin{proposition}
For every $n\in \mathbb{N}$, if we let
$$
\gamma_n = \frac{2^{2n}+1}{2^{2n}-1}
$$
then $\textbf{T}_{\gamma_n}$ does not have the shadowing property.
\end{proposition}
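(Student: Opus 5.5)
The plan is to reduce everything to the case $n=1$ (Proposition \ref{5/3}) by means of the self-affine property \eqref{selfAffineProperty}, applied at the fixed point $z_{n-1}$. Since $G'^+_{2n-2}(z_{n-1})=0$, exactly as in \eqref{sapEpsilon'} we have
$$
T\left(z_{n-1}+\frac{y}{4^{n-1}}\right)=z_{n-1}+\frac{1}{4^{n-1}}T(y)\qquad\text{for all } y\in[0,1].
$$
So the affine map $\phi_n(y)=z_{n-1}+4^{-(n-1)}y$ conjugates $T$ on $[0,1]$ to $T$ on $\phi_n([0,1])$, up to the additive constant $z_{n-1}$. Under this map $\phi_n(1/2)=z_{n-1}+\tfrac12 4^{-(n-1)}=z_n$, and
$$
w_n:=\phi_n(5/6)=\tfrac{2}{3}\left(1+4^{-n}\right)
$$
satisfies $T(w_n)=z_n$. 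A direct computation gives $\gamma_n w_n/\gamma_n\cdot\gamma_n T(w_n)=\gamma_n z_n=w_n$, that is, $\textbf{T}_{\gamma_n}(w_n)=w_n$ and $\textbf{T}_{\gamma_n}(z_n)=w_n$. This reproduces the two identities $\textbf{T}_{5/3}(5/6)=5/6=\textbf{T}_{5/3}(1/2)$ used in Proposition \ref{5/3}, and it also shows that $w_n$ is the largest element of $L(z_n)$ in $[z_n,z_n+2^{-(2n-1)}]$.

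Next I will transport Lemma \ref{L1/2} through $\phi_n$. Set $Y_k=\phi_n(y_k)$. Composing \eqref{smallHumps} with $\phi_n$ gives
$$
T\left(Y_k+\frac{x}{4^{k+n+1}}\right)=z_n+\frac{1}{4^{k+n+1}}T(x)\qquad\text{for } x\in[0,1].
$$
Likewise \ref{reverseLine} becomes $T(x)\le z_n-2(x-w_n)$ for $x\in[w_n,\phi_n(1)]$; the slope $-2$ is preserved because both coordinates are scaled by the same factor. Now define $I_k=[Y_k+\tfrac12 4^{-(k+n+1)},Y_{k+1}]$. The computations \eqref{ImageI_n} and \eqref{less} go through with $5/3$ replaced by $\gamma_n$ and with the line
$$
\mathfrak{T}_n(x)=w_n-2\gamma_n(x-w_n).
$$
This yields $I_{k-1}\subset\textbf{T}_{\gamma_n}^{\,2}(I_k)$. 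The numerical check is the analogue of $\frac{100}{27}\cdot 4^{-(n+2)}\ge\cdots$. Here $\textbf{T}_{\gamma_n}(I_k)=[w_n,w_n+\tfrac23\gamma_n 4^{-(k+n+1)}]$, and the line then reaches down to $w_n-\tfrac43\gamma_n^2 4^{-(k+n+1)}$. Since $w_n-Y_{k-1}=\tfrac13\cdot 4^{-(k+n)}$ after scaling, it suffices to have $\tfrac43\gamma_n^2\ge\tfrac43$, which holds because $\gamma_n>1$.

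With these ingredients the construction of the bad pseudo-orbit is copied verbatim. I will take $x_0=z_n$ and choose $m$ so that $I_m$ lies within $\delta/2$ of $w_n$. I then pick the backward chain $p_0,\dots,p_m$ with $p_k\in\inte(I_k)$ and $\textbf{T}_{\gamma_n}^2(p_k)=p_{k-1}$, set $x_1=p_m$, and let $x_j$ for $j\ge2$ be the true orbit of $x_1$. This is a $\delta$-pseudo orbit. Its odd terms $x_{2k+1}$ travel out to $I_0$, which lies at distance at least $\varepsilon'=w_n-Y_2$ below $w_n$; its even terms lie in $[w_n,w_n+\text{small}]$.

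The main obstacle is the final contradiction: I must show that no true orbit starting within $\varepsilon'$ of $z_n$ can follow this pattern. I would argue as follows. By the local symmetry at $z_n$ and Lemma \ref{aroundFP}, $T\ge z_n$ on a neighbourhood of $z_n$ of size about $4^{-n}$, so $\textbf{T}_{\gamma_n}(x^*)\ge w_n$. I then need to control the true orbit near $w_n$: the scaled line bound shows that points just to the right of $w_n$ are mapped to the left of $w_n$, while the hump structure near $Y_k$ shows that points just to the left of $w_n$ are mapped to the right of $w_n$ into $[w_n,w_n+\dots]$. Tracking this alternation, I aim to show that the true orbit must stay within $\varepsilon'$ of $w_n$ exactly when the pseudo-orbit is in $I_0$, at the appropriate odd time. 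This is the step I expect to require the most care, since the corresponding step in Proposition \ref{5/3} is only sketched. I would first try to prove that $\textbf{T}_{\gamma_n}^2$ maps $[Y_2,w_n]$ into itself near $w_n$, so that a true orbit whose $2k$-th iterates stay $\varepsilon'$-close to the even terms cannot have $(2k+1)$-th iterate far below $w_n$.
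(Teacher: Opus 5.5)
\paragraph{The covering step does not transfer for $n\ge 2$.} Your transport through $\phi_n$ is the route the paper has in mind. The paper's proof only checks $\gamma_n z_n=w_n$ and then appeals to the analogy with Lemma~\ref{L1/2} and Proposition~\ref{5/3}. Your identities are correct: $T(w_n)=z_n$, $\textbf{T}_{\gamma_n}(z_n)=\textbf{T}_{\gamma_n}(w_n)=w_n$, and the transported hump and line formulas.

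The step that fails is $I_{k-1}\subset\textbf{T}_{\gamma_n}^{\,2}(I_k)$. You used $w_n-Y_{k-1}=\frac13 4^{-(k+n)}$, but that quantity is $w_n-Y_k$. In fact $w_n-Y_{k-1}=4^{-(n-1)}(\frac56-y_{k-1})=\frac13 4^{-(k+n-1)}$. So the left endpoint $Y_{k-1}+\frac12 4^{-(k+n)}$ of $I_{k-1}$ lies $\frac{10}{3}4^{-(k+n+1)}$ below $w_n$. The line argument therefore requires $\frac43\gamma_n^2\ge\frac{10}{3}$, i.e.\ $\gamma_n^2\ge\frac52$. For $n=1$ this is exactly the paper's check $\frac{100}{27}\ge\frac{90}{27}$. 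For every $n\ge2$ it fails, because $\gamma_n\le\frac{17}{15}$.

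This is not an artifact of using the line. The map $\phi_n$ commutes with $T$ but not with $\gamma_nT$. So near $w_n$ the map $\textbf{T}_{\gamma_n}$ is affinely conjugate to $y\mapsto\frac56+\gamma_n\bigl(T(y)-\frac12\bigr)$ near $\frac56$, whose multiplier is $\gamma_n$, not $\frac53$. Computing exactly, $\min T$ over $[\frac56,\frac56+\frac23\gamma_n4^{-(k+2)}]$ equals $\frac12-2\cdot4^{-(k+2)}$ when $\gamma_n\le\frac{17}{15}$. Hence $\min\textbf{T}_{\gamma_n}^{\,2}(I_k)=w_n-2\gamma_n4^{-(k+n+1)}$, and the inclusion would need $\gamma_n\ge\frac53$. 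Concretely, for $n=2$, $k=1$ one gets $\min\textbf{T}_{17/15}^{\,2}(I_1)=\frac{17}{15}\cdot\frac{79}{128}\approx0.6995$, while $I_0$ starts at $\frac{89}{128}\approx0.6953$. So the backward chain $p_0,\dots,p_m$ is not available for $n\ge2$, and the paper's one-line proof does not supply a substitute either.

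\paragraph{The last step, and a way to repair the argument.} Your plan for the last step is self-defeating. If $\textbf{T}_{\gamma_n}^{\,2}$ mapped $[Y_2,w_n]$ into itself, no point just left of $w_n$ could have a $\textbf{T}_{\gamma_n}^{\,2}$-orbit reaching $I_0$, which is exactly what your pseudo-orbit needs.

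The closing argument is a parity one. We have $T\ge z_n$ on $[z_n-4^{-n},z_n+4^{-n}]$ and on $[Y_0,w_n]$, while $T\le z_n$ on $[w_n,\phi_n(1)]$. Suppose $|\textbf{T}_{\gamma_n}^{\,j}(x^*)-x_j|<\varepsilon'$ for all $j$, with $\varepsilon'$ small. Then induction gives $\textbf{T}_{\gamma_n}^{\,2j+1}(x^*)\in[w_n,w_n+\varepsilon')$ and $\textbf{T}_{\gamma_n}^{\,2j+2}(x^*)\in(w_n-\varepsilon',w_n]$. This is incompatible with $x_{2j+1}\le w_n-\varepsilon'$.

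So for $n\ge2$ you only need an escape statement to replace the chain, and it does not require a one-step covering. Let $\psi(\xi)=w_n-\textbf{T}_{\gamma_n}^{\,2}(w_n-\xi)$ for small $\xi\ge0$. Write points of the hump $[Y_k,Y_{k+1}]$ as $Y_k+xu$ with $u=4^{-(k+n+1)}$ and $x\in[0,1]$, so the hump is $\xi\in[\frac u3,\frac{4u}{3}]$. On it:
\begin{itemize}
\item $\psi(\frac u2)\ge\gamma_n^2u$ (take $x=\frac56$, then apply the line);
\item $\psi=2\gamma_nu$ at the point with $x\in[\frac23,1]$ and $T(x)=\frac{2}{3\gamma_n}$, which has $\xi\le\frac{2u}{3}$ (use the exact minimum above);
\item $\psi$ reaches $\frac{\gamma_n}{2}u$ on the next smaller hump.
\end{itemize}
Together these give $\sup_{[0,c]}\psi>c$ for every small $c>0$.

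Now suppose every $\psi$-orbit starting in $[0,\rho]$ stayed in $[0,c_0)$. Then $\overline{\bigcup_j\psi^j([0,\rho])}=[0,a]$ with $0<a\le c_0$ would satisfy $\psi([0,a])\subset[0,a]$, a contradiction. So arbitrarily close to $w_n$ on the left there are points whose $\textbf{T}_{\gamma_n}^{\,2}$-orbits leave $(w_n-c_0,w_n]$. Such a point can serve as your $x_1$, and the parity argument then finishes the proof.
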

\begin{proof}
Let $n\in\mathbb{N}$. Since $G'^+_{2(n-1)}(z_{n-1})=0$, it follows from \eqref{selfAffineProperty} that 
$$
T\left (z_{n-1}+\frac{5}{6}\cdot \frac{1}{2^{2(n-1)}}\right )=z_{n-1}+\frac{1}{2^{2(n-1)}}T(5/6)=z_{n}.
$$
We seek $\gamma_n$ such that 
$$
\gamma_n\cdot T\left (z_{n-1}+\frac{5}{6}\cdot \frac{1}{2^{2(n-1)}}\right )=z_{n-1}+\frac{5}{6}\cdot \frac{1}{2^{2(n-1)}}.
$$
Equivalently, 
$$
\gamma_n\cdot z_{n} =z_{n-1}+\frac{5}{6}\cdot \frac{1}{2^{2(n-1)}}.
$$
Recalling formula \eqref{z_n}, the result follows immediately by using that 
$$
z_{n-1}=z_{n}-\frac{1}{2}\cdot \frac{1}{2^{2(n-1)}}\quad  \text{ and } \quad \frac{1}{z_n}=\frac{3}{2}\cdot \frac{2^{2n}}{2^{2n}-1}.
$$
\end{proof}

\section{Final comments and questions}

The family of tent maps consists of the piecewise linear maps $f_s:[0,2]\to[0,2]$, $\sqrt{2}\leq s \leq 2$ defined by 
$$
f_s(x)=\begin{cases}
sx&\text{ if }0\leq x\leq 1,\\
s(2-x)& \text{ if }1\leq x\leq 2.\\
\end{cases}
$$
In \cite{CKY}, the authors investigated the shadowing property in the family of the tent maps. Among other results, they proved that the tent map $f_s$ has the shadowing property for almost every parameter $s$, while there also exists an uncountable dense set of parameters for which the corresponding tent maps fail to have the shadowing property.

In the context of the Takagi family, the main part of this work is devoted to establishing that the Takagi function ($\gamma=1$) has the shadowing property. Nonetheless, it is also shown in Section \ref{NoShadowingSection} that there are  infinitely many values of $\gamma>0$ for which $\textbf{T}_{\gamma}$ does not have the shadowing property. 

Following the spirit of the results obtained in \cite{CKY}, we pose the following open questions:

\begin{open}
Does the function $\textbf{T}_{\gamma}$ have the shadowing property for almost every parameter $\gamma>0$?
\end{open}
\begin{open}
Does there exist an uncountable dense set of parameters $\gamma>0$ for which the corresponding functions $\textbf{T}_{\gamma}$ fails to have the shadowing property?
\end{open}

\bibliographystyle{abbrv}
\bibliography{refs}

@article {A1,
    AUTHOR = {Allaart, Pieter C.},
     TITLE = {The finite cardinalities of level sets of the {T}akagi
              function},
   JOURNAL = {J. Math. Anal. Appl.},
  FJOURNAL = {Journal of Mathematical Analysis and Applications},
    VOLUME = {388},
      YEAR = {2012},
    NUMBER = {2},
     PAGES = {1117--1129},
      ISSN = {0022-247X,1096-0813},
   MRCLASS = {26A30 (26A18 26A27)},
  MRNUMBER = {2869811},
MRREVIEWER = {Tam\'as\ M\'atrai},
       DOI = {10.1016/j.jmaa.2011.10.060},
       URL = {https://doi.org/10.1016/j.jmaa.2011.10.060},
}

@article {A2,
    AUTHOR = {Allaart, Pieter C.},
     TITLE = {How large are the level sets of the {T}akagi function?},
   JOURNAL = {Monatsh. Math.},
  FJOURNAL = {Monatshefte f\"ur Mathematik},
    VOLUME = {167},
      YEAR = {2012},
    NUMBER = {3-4},
     PAGES = {311--331},
      ISSN = {0026-9255,1436-5081},
   MRCLASS = {26A27 (54E52)},
  MRNUMBER = {2961286},
MRREVIEWER = {Pratulananda\ Das},
       DOI = {10.1007/s00605-012-0390-0},
       URL = {https://doi.org/10.1007/s00605-012-0390-0},
}

@article {AK,
    AUTHOR = {Allaart, Pieter C. and Kawamura, Kiko},
     TITLE = {The {T}akagi function: a survey},
   JOURNAL = {Real Anal. Exchange},
  FJOURNAL = {Real Analysis Exchange},
    VOLUME = {37},
      YEAR = {2011/12},
    NUMBER = {1},
     PAGES = {1--54},
      ISSN = {0147-1937,1930-1219},
   MRCLASS = {26A27 (26A16 28A80)},
  MRNUMBER = {3016850},
MRREVIEWER = {Roland\ Girgensohn},
       URL = {http://projecteuclid.org/euclid.rae/1335806762},
}

@article {An,
    AUTHOR = {Anosov, D. V.},
     TITLE = {Geodesic flows on closed {R}iemannian manifolds of negative
              curvature},
   JOURNAL = {Trudy Mat. Inst. Steklov.},
  FJOURNAL = {Akademiya Nauk SSSR. Trudy Matematicheskogo Instituta imeni V.
              A. Steklova},
    VOLUME = {90},
      YEAR = {1967},
     PAGES = {209},
      ISSN = {0371-9685},
   MRCLASS = {57.36 (28.00)},
  MRNUMBER = {224110},
MRREVIEWER = {M.\ Eisenberg},
}

@article {Bi,
    AUTHOR = {Birkhoff, George D.},
     TITLE = {An extension of {P}oincar\'e's last geometric theorem},
   JOURNAL = {Acta Math.},
  FJOURNAL = {Acta Mathematica},
    VOLUME = {47},
      YEAR = {1926},
    NUMBER = {4},
     PAGES = {297--311},
      ISSN = {0001-5962,1871-2509},
   MRCLASS = {99-04},
  MRNUMBER = {1555218},
       DOI = {10.1007/BF02559515},
       URL = {https://doi.org/10.1007/BF02559515},
}

@article {Bo,
    AUTHOR = {Bowen, Rufus},
     TITLE = {{$\omega $}-limit sets for axiom {${\rm A}$} diffeomorphisms},
   JOURNAL = {J. Differential Equations},
  FJOURNAL = {Journal of Differential Equations},
    VOLUME = {18},
      YEAR = {1975},
    NUMBER = {2},
     PAGES = {333--339},
      ISSN = {0022-0396,1090-2732},
   MRCLASS = {58F10},
  MRNUMBER = {413181},
MRREVIEWER = {Zbigniew\ Nitecki},
       DOI = {10.1016/0022-0396(75)90065-0},
       URL = {https://doi.org/10.1016/0022-0396(75)90065-0},
}

@article {Bu,
    AUTHOR = {Buczolich, Z.},
     TITLE = {Irregular 1-sets on the graphs of continuous functions},
   JOURNAL = {Acta Math. Hungar.},
  FJOURNAL = {Acta Mathematica Hungarica},
    VOLUME = {121},
      YEAR = {2008},
    NUMBER = {4},
     PAGES = {371--393},
      ISSN = {0236-5294,1588-2632},
   MRCLASS = {28A75 (26A24 26A27)},
  MRNUMBER = {2461441},
MRREVIEWER = {Ville\ Suomala},
       DOI = {10.1007/s10474-008-7220-9},
       URL = {https://doi.org/10.1007/s10474-008-7220-9},
}

@article {CKY,
    AUTHOR = {Coven, Ethan M. and Kan, Ittai and Yorke, James A.},
     TITLE = {Pseudo-orbit shadowing in the family of tent maps},
   JOURNAL = {Trans. Amer. Math. Soc.},
  FJOURNAL = {Transactions of the American Mathematical Society},
    VOLUME = {308},
      YEAR = {1988},
    NUMBER = {1},
     PAGES = {227--241},
      ISSN = {0002-9947,1088-6850},
   MRCLASS = {58F30 (34C35)},
  MRNUMBER = {946440},
MRREVIEWER = {Georgi\u i\ Osipenko},
       DOI = {10.2307/2000960},
       URL = {https://doi.org/10.2307/2000960},
}

@article {FGGLl2,
    AUTHOR = {Ferrera, Juan and G\'omez Gil, Javier and Llorente, Jes\'us},
     TITLE = {Superdifferential analysis of the {T}akagi-{V}an der
              {W}aerden functions},
   JOURNAL = {Set-Valued Var. Anal.},
  FJOURNAL = {Set-Valued and Variational Analysis. Theory and Applications},
    VOLUME = {30},
      YEAR = {2022},
    NUMBER = {3},
     PAGES = {811--826},
      ISSN = {1877-0533,1877-0541},
   MRCLASS = {49J52 (26A27 26A30 28A78)},
  MRNUMBER = {4455147},
       DOI = {10.1007/s11228-021-00620-1},
       URL = {https://doi.org/10.1007/s11228-021-00620-1},
}

@article{HYG,
title = {Do numerical orbits of chaotic dynamical processes represent true orbits?},
journal = {Journal of Complexity},
volume = {3},
number = {2},
pages = {136-145},
year = {1987},
issn = {0885-064X},
doi = {https://doi.org/10.1016/0885-064X(87)90024-0},
url = {https://www.sciencedirect.com/science/article/pii/0885064X87900240},
author = {Stephen M Hammel and James A Yorke and Celso Grebogi}
}

@article {HLL,
    AUTHOR = {Hanson, Bruce and Llorente, Jes\'us},
     TITLE = {The little lip and big {L}ip functions of the {T}akagi
              function},
   JOURNAL = {Rev. R. Acad. Cienc. Exactas F\'is. Nat. Ser. A Mat. RACSAM},
  FJOURNAL = {Revista de la Real Academia de Ciencias Exactas, F\'isicas y
              Naturales. Serie A. Matematicas. RACSAM},
    VOLUME = {120},
      YEAR = {2026},
    NUMBER = {1},
     PAGES = {Paper No. 27, 18},
      ISSN = {1578-7303,1579-1505},
   MRCLASS = {26A27 (26A24 26A30)},
  MRNUMBER = {5004708},
       DOI = {10.1007/s13398-025-01815-z},
       URL = {https://doi.org/10.1007/s13398-025-01815-z},
}

@article {Ka,
    AUTHOR = {Kahane, Jean-Pierre},
     TITLE = {Sur l'exemple, donn\'e{} par {M}. de {R}ham, d'une fonction
              continue sans d\'eriv\'ee},
   JOURNAL = {Enseign. Math. (2)},
  FJOURNAL = {L'Enseignement Math\'ematique. Revue Internationale. 2e
              S\'erie},
    VOLUME = {5},
      YEAR = {1959},
     PAGES = {53--57},
      ISSN = {0013-8584},
   MRCLASS = {26.00},
  MRNUMBER = {108556},
MRREVIEWER = {U.\ S.\ Haslam-Jones},
}

@incollection {L,
    AUTHOR = {Lagarias, Jeffrey C.},
     TITLE = {The {T}akagi function and its properties},
 BOOKTITLE = {Functions in number theory and their probabilistic aspects},
    SERIES = {RIMS K\^oky\^uroku Bessatsu},
    VOLUME = {B34},
     PAGES = {153--189},
 PUBLISHER = {Res. Inst. Math. Sci. (RIMS), Kyoto},
      YEAR = {2012},
   MRCLASS = {26A27 (11A63)},
  MRNUMBER = {3014845},
MRREVIEWER = {Kiko\ Kawamura},
}

@inproceedings{Palmer,
  title={Shadowing in Dynamical Systems: Theory and Applications},
  author={Kenneth James Palmer},
  year={2010},
  url={https://api.semanticscholar.org/CorpusID:117878264}
}

@inproceedings{Pilyugin,
  title={Shadowing in dynamical systems},
  author={Sergei Yu. Pilyugin},
  year={1999},
  url={https://api.semanticscholar.org/CorpusID:118797227}
}

@article {T,
    AUTHOR = {Takagi, Teiji},
     TITLE = { A simple example of the continuous function without derivative},
   JOURNAL = {Proc. Phys. Math. Soc. Tokio Ser. II},
  FJOURNAL = {Proc. Phys. Math. Soc. Tokio Ser. II},
    VOLUME = {},
      YEAR = {1903},
    NUMBER = {1},
     PAGES = {176--177},
       DOI = {10.11429/subutsuhokoku1901.1.F176},
       URL = {https://doi.org/10.11429/subutsuhokoku1901.1.F1767},
}

@article {TK,
    AUTHOR = {Gedeon, Tom\'a\v s{} and Kuchta, Milan},
     TITLE = {Shadowing property of continuous maps},
   JOURNAL = {Proc. Amer. Math. Soc.},
  FJOURNAL = {Proceedings of the American Mathematical Society},
    VOLUME = {115},
      YEAR = {1992},
    NUMBER = {1},
     PAGES = {271--281},
      ISSN = {0002-9939,1088-6826},
   MRCLASS = {58F12 (58F20)},
  MRNUMBER = {1086325},
MRREVIEWER = {Remo\ Badii},
       DOI = {10.2307/2159597},
       URL = {https://doi.org/10.2307/2159597},
}

@article {YT,
    AUTHOR = {Yamaguchi, Yoshihiro and Tanikawa, Kiyotaka and Mishima,
              Nobuhiko},
     TITLE = {Fractal basin boundary in dynamical systems and the
              {W}eierstrass-{T}akagi functions},
   JOURNAL = {Phys. Lett. A},
  FJOURNAL = {Physics Letters. A},
    VOLUME = {128},
      YEAR = {1988},
    NUMBER = {9},
     PAGES = {470--478},
      ISSN = {0375-9601,1873-2429},
   MRCLASS = {58F08 (28A75 30D05)},
  MRNUMBER = {940616},
       DOI = {10.1016/0375-9601(88)90878-X},
       URL = {https://doi.org/10.1016/0375-9601(88)90878-X},
}

@article{JLL,
  title={Generalized Takagi functions},
  author={Llorente, Jesús},
  year={2024},
  url={https://hdl.handle.net/20.500.14352/105164},
journal={PhD thesis},
}

@misc{BLL,
  author       = {Buczolich, Zolt\'an and Llorente, Jes\'us},
  title        = {Differentiability properties of iterates of the Takagi family},
  year         = {2026},
  howpublished = {To be submitted},
  note         = {},
  url          = {}
}

\end{document}